\newlist{propenum}{enumerate}{1} %
\setlist[propenum]{label=\alph*{\rm)}, ref=\theproposition(\alph*)}
\newlist{corenum}{enumerate}{1} %
\setlist[corenum]{label=\alph*{\rm)}, ref=\thecorollary(\alph*)}
\crefname{assumption}{Assumption}{Assumptions}
\declaretheorem[name=Theorem,numberwithin=section]{theorem}
\declaretheorem[name=Definition,style=definition,numberlike=theorem]{definition}
\declaretheorem[name=Example,style=definition,numberlike=theorem,qed=\qedsymbol]{example}
\declaretheorem[name=Proposition,numberlike=theorem]{proposition}
\declaretheorem[name=Corollary,numberlike=theorem]{corollary}
\declaretheorem[name=Assumption,numberlike=theorem]{assumption}
\numberwithin{equation}{section}
\numberwithin{theorem}{section}
\numberwithin{figure}{section}
\numberwithin{table}{section}
\numberwithin{algocf}{section}
\newcommand{\polar}{^\circ}
\newcommand{\convA}{\widehat\Ascr}
\newcommand{\Face}[1]{\Fscr_{\scriptscriptstyle #1}}
\newcommand{\FaceA}{\Fscr_{\!\!\scriptscriptstyle\Ascr}}
\newcommand{\FaceCp}{\Fscr_{\!\!\scriptscriptstyle\Cscr\polar}}
\newcommand{\gauge}{\gamma}
\newcommand{\As}{_{\!\scriptscriptstyle\Ascr}}
\newcommand{\Asi}{_{\!\scriptscriptstyle\Ascr_i}}
\newcommand{\Aso}{_{\!\scriptscriptstyle\Ascr_1}}
\newcommand{\Ast}{_{\!\scriptscriptstyle\Ascr_2}}
\newcommand{\Cs}{_{\scriptscriptstyle\Cscr}}
\newcommand{\Csp}{_{\scriptscriptstyle\Cscr^\circ}}
\newcommand{\maxconv}{\!\mathop{\diamond}}
\DeclareMathOperator{\suppa}{\Sscr\As}
\DeclareMathOperator{\supp}{\hbox{\rm\textbf{supp}}}
\newcommand{\subAsum}{_{\scriptscriptstyle\Ascr_1+\Ascr_2}}
\begin{document}

\pdfinfo{/Author (Michael P. Friedlander)
         /Title (Insert Title Here)
         /Keywords (keyword1, keyword2, keyword3)}

       \title{Polar Alignment and Atomic Decomposition%
         \footnote{Department of Computer Science, University of British
           Columbia, 2366 Main Mall, Vancouver, BC, V6R 1Z4, Canada. Email:
           \texttt{zhenanf@cs.ubc.ca}, \texttt{clatar1@gmail.com},
           \texttt{yifan.0.sun@gmail.com}, \texttt{michael.friedlander@ubc.ca}.
           Research supported by ONR award N00014-17-1-2009. }%
       } \author{Zhenan Fan, Halyun Jeong, Yifan Sun, Michael P. Friedlander}
       \date{December 11, 2019}

\maketitle

\begin{abstract}
  Structured optimization uses a prescribed set of atoms to assemble a solution
  that fits a model to data. Polarity, which extends the familiar notion of
  orthogonality from linear sets to general convex sets, plays a special role in
  a simple and geometric form of convex duality. This duality correspondence
  yields a general notion of alignment that leads to an intuitive and complete
  description of how atoms participate in the final decomposition of the
  solution. The resulting geometric perspective leads to variations of existing
  algorithms effective for large-scale problems. We illustrate these ideas with
  many examples, including applications in matrix completion and morphological
  component analysis for the separation of mixtures of signals.
\end{abstract}

\makeatletter
\renewcommand\tableofcontents{%
  \@starttoc{toc}%
}
\makeatother
\begin{quote}
  \centering
  \textbf{Contents}
  \\
  \setcounter{tocdepth}{1}
  \tableofcontents
\end{quote}
\clearpage

\section{Introduction}

Convex optimization provides a valuable computational framework that renders
many problems tractable because of the range of powerful algorithms that can be
brought to the task. The key is that a certain mathematical structure---i.e.,
convexity of the functions and sets defining the problem---lays open an enormous
range of theoretical and algorithmic tools that lend themeselves astonishingly
well to computation. There are limits, however, to the scalability of
general-purpose algorithms for convex optimization. As has been recognized in
the optimization and related communities for at least the past decade,
significant efficiencies can be gained by acknowledging the latent structure in
the solution itself, coupled with the overarching structure provided by
convexity.

Structured optimization proceeds along these lines by using a prescribed set of
atoms from which to assemble an optimal solution. In effect, the atoms selected
to participate in forming the solution decompose the model into simpler parts,
which offers opportunities for algorithmic efficiency in solving the
optimization problem. From a modeling point of view, the particular atoms that
constitute the computed solution often represent key explanatory components of a
model. An atomic decomposition thus provides the principal components of a
solution, i.e., its most informative features. %

Our purpose with this paper is to describe the rich convex geometry that
underlies atomic decomposition. The path we follow builds on the duality
inherent in convex cones: every convex cone is paired with a polar cone. The
extreme rays of any one of these cones is in some sense \emph{aligned} with
certain extreme rays of its polar cone. Brought into the context of atomic
decomposition, this notion of polar alignment provides a theoretical framework
for identifying the atoms that participate in a decomposition. This approach
facilitates certain algorithmic design patterns that promote computational
efficiency, as we demonstrate with concrete examples.

\section{Atomic decomposition}

The decomposition of a fixed vector $x\in\Re^n$ with respect to a set
of atoms $\Ascr\subset\Re^n$ is given by the sum
\begin{equation}
  \label{eq:1}
  x = \sum_{a\in\Ascr} c_a a, \quad c_a\ge0 \quad \forall a\in\Ascr.
\end{equation}
Each coefficient $c_a$ measures the contribution of the corresponding atom $a$
toward the construction of $x$. %
We are particularly interested in the question of determining which atoms are
essential to expressing $x$ as a positive superposition. Let
\begin{equation}
  \label{eq:2}
  \gauge\As(x) = \inf_{c_a}\Set{ \sum_{a\in\Ascr}c_a | x = \sum_{a\in\Ascr}c_aa,\ c_a \ge0\ \forall a\in\Ascr }
\end{equation}
be the minimal sum of weights over all valid atomic decompositions. The
significant atoms (those that \emph{support} the vector $x$) are those that
contribute positively in forming the minimal sum. We are thus led to the
following definition.

\begin{definition}[Support set] \label{def:support} A set
  $\Sscr_{\Ascr}(x)\subset\Ascr$ is a \emph{support set} for $x$ with
  respect to $\Ascr$ if every element $a\in\Sscr_\Ascr(x)$ has a
  coefficient $c_a$ from~\eqref{eq:1} that is strictly positive. That
  is,
  \begin{equation}
    \label{eq:3}
    \gauge\As(x) = \sum_{\mathclap{a\in\suppa(x)}} c_a,
    \qquad x = \sum_{\mathclap{a\in\suppa(x)}} c_aa,
    \text{and} c_a > 0\enspace \forall a\in\suppa(x).
  \end{equation}
  The set $\supp\As(x)$ is defined as the set of all support
  sets. Thus, any $\Sscr\in\supp\As(x)$ is a valid support set. \qed
\end{definition}

For any given atomic decomposition of the solution to an optimization problem,
the atoms $a\in\Ascr$ with large coefficients $c_a$ correspond to atoms that are
most significant in the minimization process. In the simplest case, the atoms
$\Ascr$ may be taken as the collection of canonical unit vectors $\{\pm
e_1,\ldots,\pm e_n\}$, and then the significant atoms correspond to the most
significant variables $x_j$ in the vector $x=(x_1,\ldots,x_n)$. Under
\cref{def:support}, this interpretation extends to arbitrary atomic sets.

This generic model for atomic decompositions was promoted by Chen
et al.~\cite{cds98,chendonosaun:2001} in the context of sparse signal
decomposition, and more recently by Chandrasekaran et
al.~\cite{chandrasekaran2012convex}, who are concerned with obtaining sparse
solutions to linear inverse problems. In the general framework outlined by
Chandrasekaran et al., the gauge function $\gauge\As$ can be used to define a
general convex optimization problem suitable for recovering a ground-truth
solution from a relatively small number of observations.

\subsection{Approach}
\label{sec:approach}

The convex function $\gamma\As$ is equivalent to the Minkowski
functional~\cite[Section~15]{rockafellar1970convex} to the convex hull
$\convA\:=\conv(\Ascr\cup\{0\})$; see
\cref{prop-guage-equivalence}. As we describe in
\cref{sec-convexanalysis}, the Minkowski and support functions
\begin{equation}\label{eq:4}
  \gauge\As(x) = \inf\set{\lambda\ge0|x\in\lambda\convA}
  \textt{and}
  \sigma\As(x) = \sup\set{\ip{x}{z} | z\in\convA}
\end{equation}
to the set $\Ascr$ form a dual pairing under a polarity operation. One of the
defining properties of this dual pairing is that it satisfies the \emph{polar
  inequality}\cite[Section~15]{rockafellar1970convex}
\begin{equation}
  \label{eq:polar-inequality}
  \ip x z \le \gauge\As(x)\cdot\sigma\As(z) \quad \forall (x,z)\in\dom\gauge\As\times\dom\sigma\As.
\end{equation}

\begin{definition}[Alignment] \label{def:alignment} A pair
  $(x,z)\in\Real^n\times\Real^n$ is \emph{aligned} with respect to the atomic
  set $\Ascr$, i.e., $x$ and $z$ are $\Ascr$-aligned, if the polar
  inequality~\eqref{eq:polar-inequality} holds as an equation.
\end{definition}

\begin{figure}[t]
  \centering
  \includegraphics[page=4]{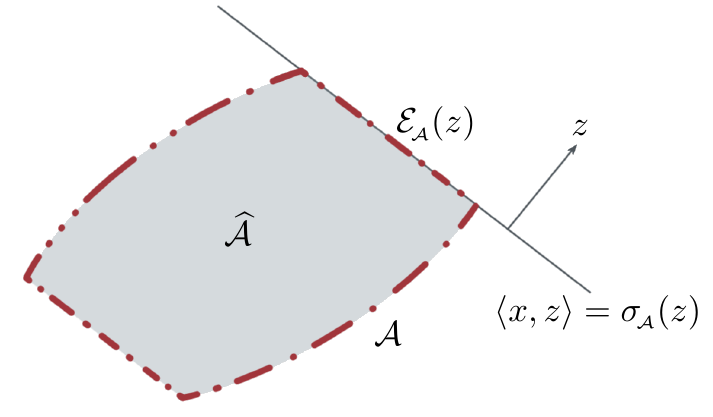}
  \caption{The set of atoms in the set $\Ascr$ generally (but not necessarily)
    defines the boundary of the convex hull $\convA$. The essential
    atoms $\Escr\As(z)$ exposed by a vector $z$ lie on the supporting
    hyperplane $\set{x|\ip x z=\sigma\As(z)}$.}
  \label{fig:essential-atoms}
\end{figure}

This general notion of alignment follows from the special case where
$\Ascr=\set{x|\norm{x}_2\le1}$ is the unit 2-norm ball, which results in
$\gauge\As=\sigma\As=\norm{\cdot}_2$. In that case, the polar
inequality~\eqref{eq:polar-inequality} then reduces to the well-known
Cauchy-Schwartz inequality
\begin{equation*}
  \label{eq:cauchy-inequality}
  \ip x z \le \norm{x}_2\cdot\norm{z}_2,
\end{equation*}
which holds as an equation if and only if $x$ and $z$ are aligned in
the usual sense: there exists a nonnegative scalar $\alpha$ such that
$x=\alpha z$. Our notion of alignment captures other important special
cases, including the H\"older inequality, which is a
special case of~\eqref{eq:polar-inequality} in which $\Ascr$ is the unit
$p$-norm ball, with $p\in[1,\infty]$.

A rich convex geometry 
underlies this general notion of alignment, and plays a role in 
identifying the atoms important for the decomposition~\eqref{eq:2}.
Suppose that a vector $z$ is $\Ascr$-aligned with $x$. As we will demonstrate,
all atoms $a\in\Ascr$ that participate significantly in a decomposition of $x$
must be contained in the set of exposed atoms, i.e.,
\begin{equation} \label{eq:13}
  \suppa(x)\subseteq\Escr\As(z):=\Set{a\in\Ascr\cup\{0\} | \ip a z = \sigma\As(z) }.
\end{equation}
Note that the convex hull of the exposed set $\Escr\As(z)$ forms a face of
$\convA$ exposed by the vector $z$. Because all of the atoms $a\in\Escr\As(z)$
necessarily have unit gauge value, i.e., $\gauge\As(a)=1$, the condition $\ip a
z = \sigma\As(z)$ then implies that significant atoms must also be
$\Ascr$-aligned with $z$. \Cref{fig:essential-atoms} presents a visualization of
this concept.

\subsection{Examples}

There are many varieties of atomic sets and recognizable convex
regularizers used to obtain sparse decompositions.
Chandrasekaran et al.~\cite{chandrasekaran2012convex} and
Jaggi~\cite{jaggi2013revisiting} both give extensive lists of atoms
and the norms that they induce, as well as their applications in practice.
Here we provide several simple examples that illustrate the variety of
ways in which vectors can be aligned.

\begin{example}[One norm] \label{example-one-norm} Let
  $\Ascr = \{\pm e_1,\ldots,\pm e_n\}$ be the signed standard basis
  vectors. This atomic set induces the 1-norm, which is the canonical
  example of a sparsifying convex penalty, and is paired with its dual
  $\infty$-norm:
  \[
    \gauge\As(x) = \|x\|_1 \text{and} \sigma\As(z) = \|z\|_\infty.
  \]
  The polar inequality~\eqref{eq:polar-inequality} reduces to
  H\"older's inequality for these norms---i.e.,
  $\ip x z \le \|x\|_1\cdot\|z\|_\infty$. As is well known, this holds
  with equality---i.e., $x$ and $z$ are $\Ascr$-aligned---if and only
  if
\begin{equation*}
  x_i \neq 0
  \quad\Longrightarrow\quad
  \sign(x_i) z_i = \max_j\,|z_j| \quad \forall i=1,\ldots,n.
\end{equation*}
Thus, alignment of the pair $(x,z)$ with respect to the atomic set
$\Ascr$ is equivalent to the statement that $\suppa(x)\subset\Escr_\Ascr(z)$,
with
\[
  \suppa(x) = \set{\sign(x_i)e_i | x_i \neq 0}
  \text{and}
  \Escr_\Ascr(z) = \set{\sign(z_i)e_i | |z_i| =  \max_j\,|z_j|}.
\]

This condition also characterizes an optimality condition. For example, consider
the LASSO~\cite{tib96} problem
\begin{equation*}
  \minimize{x}\enspace \half\|Ax-b\|_2^2 \enspace\st\enspace\|x\|_1\le\tau,
\end{equation*}
where $\tau$ is a non-negative parameter. It is straightforward to verify that
$x$ is optimal if and only if $\suppa(x)\subset\Escr_\Ascr(z)$ where
$z=A\T(b-Ax)$ is the negative gradient of the objective.
\cref{sec-manifestations} describes in more detail the connection between
optimality and alignment.
\end{example}

\begin{example}[Nuclear norm] \label{example-nuclear-norm}

  The nuclear norm, or Schatten 1-norm, of a matrix is the spectral analog to
  the vector 1-norm. The nuclear norm and its dual spectral norm can be obtained
  via the atomic set $\Ascr = \set{uv^T | \|u\|_2=\|v\|_2 = 1}$ of normalized
  $n$-by-$m$ rank-1 matrices. Then for matrices $X$ and $Z$,
  \[
    \gauge\As(X) = \|X\|_* \text{and} \sigma\As(Z) = \sigma\submax(Z).
  \]
  These are, respectively, the nuclear and spectral norms of $X$ and $Z$---i.e.,
  the sum of singular values of $X$ and the maximum singular value of $Z$. The
  atomic description of these functions is consistent with the notion that the
  nuclear norm is a convex function that promotes low rank (e.g., sparsity with
  respect to rank-1 matrices) \cite{recht2010guaranteed}. Define the trace inner
  product $\ip X Z := \trace X\T Z$. The alignment condition $\ip X Z =
  \norm{X}_1\cdot \norm{Z}_\infty$ holds when $X$ and $Z$ have a simultaneously
  ordered singular value decomposition (SVD). In particular, if $X$ is rank $r$,
  then
  \[
    X = \sum_{i=1}^r c^{}_i u^{}_iv_i^T
    \text{and}
    Z = \sum_{i=1}^{\mathclap{\min\{m,n\}}} s^{}_iu^{}_iv_i^T
  \]
  are the SVDs of $X$ and $Z$, where the singular values are ordered as
  \[
    c_1 \geq \cdots \geq c_r > 0, \textt{and} s_1 = \cdots = s_d >
    s_{d+1} \geq \cdots \geq s_{\min\{m,n\}} \geq 0.
  \]
  By this description,  
  \[
    \suppa(X)=\set{u^{}_1v_1^T,\ldots,u^{}_rv_r^T} \text{and}
    \Escr\As(Z)=\set{u^{}_1v_1^T,\ldots,u^{}_dv_d^T}.
  \]
  The inclusion~\eqref{eq:13}, which identifies the support as a subset of the
  exposed atoms, implies $d \geq r$. Thus, the singular vectors of $Z$
  corresponding to the $d$ singular values $s_1,\ldots,s_d$ contain the singular
  values of $X$. Note that this can also be proven as a consequence of von
  Neumann's trace inequality \cite{vonneumann:1937,lewis:1995}. This property is
  used by Friedlander et al. \cite{friedlander2016low} for the construction of
  dual methods for low-rank semidefinite optimization.
\end{example}

\begin{example}[Linear subspaces] \label{example-linear-subspaces}

  Suppose that the set of atoms $\Ascr$ contains all the elements of a
  linear subspace $\Lscr$. In this case, the gauge $\gamma_\Lscr(x)$
  is finite only if $x$ is in $\Lscr$, and similarly, the support
  function $\sigma_\Lscr(z)$ is finite only if $z$ is in its
  orthogonal complement $\Lscr^\perp$. In particular, because $\Lscr$ and
  $\Lscr^\perp$ are cones,
  \[
    \gauge_\Lscr(x) = \delta_\Lscr(x)
    \text{and}
    \sigma_\Lscr(z) = \delta_{\Lscr^\perp}(z),
  \]
  where $\delta\Cs(v)$ is the indicator to a set $\Cscr$, which evaluates to 0
  if $v\in\Cscr$ and to $+\infty$ otherwise. The respective domains of the
  gauge and support functions are thus $\Lscr$ and $\Lscr^\perp$. It
  follows that, under the atomic set $\Lscr$, the vectors $x$ and $z$ are
  $\Lscr$-aligned if and only if $x\in\Lscr$ and $z\in\Lscr^\perp$. Thus, the aligned
  vectors are orthogonal.
\end{example}

\subsection{Applications and prior work}

One of the main implications of our approach is its usefulness in
using dual methods for discovering atomic decompositions. A dual
optimization method can be interpreted as solving for an aligning
vector $z$ that exposes the support of a primal solution $x$. If the
number of exposed atoms is small, a solution $x$ of the primal problem
can be resolved over the reduced support, but without the atomic
regularization, which may be computationally much cheaper~\cite{2015arXiv151102204F} or better
conditioned~\cite{negahban2012restricted}. Alternatively, two-metric
methods can be designed to act differently on a primal iterate's
suspected support~\cite{gafni1984two}. In many applications, such as
feature selection, knowing the support itself may be sufficient.  The
conditions under which such a $z$ can often be found occurs in several
applications, as we describe with various examples throughout the paper.

\paragraph{Machine learning}

The regularized optimization problem described in \cref{sec-manifestations}
frequently appear in applications of machine learning for the purpose of model
complexity reduction. The most popular use cases are the vector 1-norm
$\gauge\As(x) = \|x\|_1$ in feature selection~\cite{tibshirani1996regression},
its group-norm variant~\cite{jacob2009group}, and the nuclear norm
$\gauge\As(X) = \|X\|_*$ in matrix completion \cite{recht2010guaranteed}.
However, many other sparsity-promoting regularizers appear in practice
\cite{zeng2014ordered}. Although the unconstrained formulation is most popular,
particularly when the proximal operator is computationally
convenient~\cite{parikh2013proximal}, the gauge-constrained formulation is
frequently used and solved via the conditional gradient method
\cite{frank1956algorithm,dunn1978conditional,jaggi2013revisiting}. Popular dual
methods, which iterate over a dual variable $z^{(k)}\equiv -\nabla f(x^{(k)})$
but maintain the corresponding primal variable $x^{(k)}$ only implicitly, include bundle methods
\cite{lemarechal1981bundle} and dual averaging
\cite{xiao2010dual,duchi2012dual}.

\paragraph{Linear conic optimization}

Conic programs are a cornerstone of convex optimization. The nonnegative cone
$\Re^n_+$, the second-order cone $\Qscr_+^{n+1} = \set{(x,\tau) | \|x\|_2 \leq
  \tau}$, and the semidefinite cone $\Sscr_+^n = \set{X | u\T Xu\ge0\ \forall
  u}$, respectively, give rise to linear, second-order, and semidefinite
programs. These problem classes capture an enormous range of important models,
and can be solved efficiently by a variety of algorithms, including interior
methods \cite{karmarkar1984new,nen94,rene:2001}. Conic programs and their
associated solvers are key ingredients for general purpose optimization software
packages such as YALMIP \cite{lofberg2004yalmip} and CVX \cite{grant2008cvx}.
The alignment conditions for these specific cones have been exploited in dual
methods, such as in the spectral bundle method for large-scale semidefinite
programming \cite{helmberg2000spectral}. \Cref{example-conic-opt} demonstrates
this alignment principle in the context of conic optimization.

\paragraph{Gauge optimization} 

The class of gauge optimization problems, as defined by Freund's 1987 seminal
work~\cite{freund1987dual}, can be simply stated: find the element of a convex
set that is minimal with respect to a gauge. These conceptually simple problems
appear in a remarkable array of applications, and include parts of sparse
optimization and all of conic
optimization~\cite[Example~1.3]{friedlander2014gauge}. This class of
optimization problems admits a duality relationship different from classical
Lagrange duality, and is founded on the polar inequality.
In this context, the polar inequality provides an analogue to weak duality,
well-known in Lagrange duality, which guarantees that any
feasible primal value provides an upper bound for any feasible dual
value. In the gauge optimization context, a primal-dual pair $(x,z)$ is optimal
if and only if the polar inequality holds as an equation, which under
\cref{def:alignment} implies that $x$ and $z$ are
aligned. %
The connection between polar alignment and optimality is
discussed further in \cref{sec-gauge-optimization}.

\paragraph{Two-stage methods} In sparse optimization, two-stage methods first
identify the primal variable support, and then solve the problem over a reduced
support \cite{ko1994iterative,cristofari2017two}. If the support is sparse
enough, the second problem may be computationally much cheaper, either because
it allows for faster Newton-like methods, or because of better conditioning
\cite{negahban2012restricted}. The atomic alignment principles we describe in
\cref{sec-convexanalysis-atomic} give a general recipe for extracting primal
variable support from a computed dual variable, which at optimality is aligned
with the primal variable; see \cref{sec-manifestations}. This property forms the
basis for our approach to morphological component analysis, described in
\cref{sec-morphological-component-analysis}.

\section{Alignment with respect to general convex sets} \label{sec-convexanalysis}

The alignment principles we develop depend on basic notions of convex sets and
their supporting hyperplanes. Gauges and support functions, defined
in~\eqref{eq:4}, facilitate many of the needed derivations. Define the conic
extension of any set $\Dscr\subset\Real^n$ by
\[
  \cone\Dscr = \set{\alpha d | d\in\Dscr,\ \alpha\ge0}.
\]
Throughout the paper, we use the symbol $\Cscr$
to denote a general convex set in $\Re^n$. %
The following blanket assumption, which holds throughout the paper, ensures a
desirable symmetry between a set and its polar, as explained
in~\cref{sec-polarity}. This assumption considerably simplifies our analysis and
fortunately holds for many of the most important and relevant examples.
\begin{assumption}[Origin containment] \label{blanket-assumption}
  The set $\Cscr\subset\Re^n$ is closed convex and contains the origin.
\end{assumption}

\subsection{Polarity} \label{sec-polarity}

Our notion of alignment is based on the polarity of convex
sets. Polarity is most intuitive in the context of convex cones, which
are convex sets closed under positive scaling, i.e., the
set $\Kscr$ is a convex cone if $\alpha\Kscr\subset\Kscr$ for all
$\alpha>0$ and $\Kscr + \Kscr \subset \Kscr$. Its polar
\begin{equation} \label{eq-polar-cone}
  \Kscr\polar=\set{z | \ip x z \le 0 \ \forall x\in\Kscr}
\end{equation}
is also a convex cone, and its vectors make an oblique angle (i.e., a
nonpositive inner product) with every vector in $\Kscr$. For a general
convex set $\Cscr$, its polar is defined as the convex set
\begin{equation} \label{eq-polar-set}
  \Cscr\polar=\set{z | \ip x z \le 1\ \forall x\in\Cscr }.
\end{equation}

One way to connect the polarity definitions~\eqref{eq-polar-cone}
and~\eqref{eq-polar-set} is by ``lifting'' the set $\Cscr$ and its polar
$\Cscr\polar$ and embedding them into opposing cones in $\Re^{n+1}$:
\[
  \Kscr\Cs \defd \cone(\Cscr\times\{1\})
  \text{and}
  \Kscr\Cs\polar \defd \cone(\Cscr\polar\times\{-1\}).
\]
Then for any nonzero $(n+1)$-vectors $\xbar\in\Kscr$ and $\zbar\in\Kscr\polar$,
there exist positive scalars $\alpha_x$ and $\alpha_z$, and vectors $x\in\Cscr$ and $z\in\Cscr\polar$, such that
\begin{equation} \label{eq:10}
  \ip\xbar\zbar = \ip*{\alpha_x\pmat{x\\1}}{\,\alpha_z\pmat{\phantom-z\\-1}}
  = \alpha_x\cdot\alpha_z(\ip x z - 1)\le0,
\end{equation}
where the last inequality follows from the polar definition in
\eqref{eq-polar-set}. This last inequality confirms that the cones $\Kscr\Cs$
and $\Kscr\Cs\polar$ are polar to each other under definition~\eqref{eq-polar-cone}.

The blanket assumption that $\Cscr$ is closed and contains the origin
(\cref{blanket-assumption}) yields a special symmetry because then
the polar $\Cscr\polar$ also contains the origin and
$\Cscr^{\circ\circ}=\Cscr$
\cite[Theorem~14.5]{rockafellar1970convex}. This is one of the reasons
why we define $\convA=\conv(\Ascr\cup\{0\})$ to include the origin.%

The polar pair $\Cscr$ and $\Cscr\polar$ can be said to generate the
corresponding gauge and support functions $\gauge\Cs$ and $\sigma\Cs$, as we
show below. It follows immediately from~\eqref{eq:4} that the gauge and support
functions are positively homogeneous, i.e., $\gauge\Cs(\alpha x) =
\alpha\gauge\Cs(x)$ for all $\alpha\ge0$, and similarly for $\sigma\Cs$. Thus
the epigraphs for these functions are convex cones. Moreover, the unit level sets for
these functions are the sets that define them:
\begin{equation} \label{eq:15}
  \Cscr=\set{x|\gauge\Cs(x)\le1}
  \text{and}
  \Cscr\polar = \set{z|\sigma\Cs(z)\le1}.
\end{equation}
It thus follows that
\begin{equation}\label{eq:26} 
  \epi\gauge\Cs = \cone(\Cscr\times\{1\})
  \text{and}
  \epi\sigma\Cs = \cone(\Cscr\polar\times\{1\}).
\end{equation}
\cref{fig:gauge-epi} shows a visualization of the epigraph of the gauge to $\Cscr$.

\begin{figure}[t]
  \centering
  \includegraphics[page=7]{illustrations}
  \caption{The epigraph of the gauge $\gauge\Cs$ is the cone in
    $\Re^n\times\Real$ generated by the set $\Cscr\subset\Re^n$; see~\eqref{eq:26}.}
  \label{fig:gauge-epi}
\end{figure}

For a set $\Cscr$, the recession cone contains the set of unbounded
directions:
\begin{equation} \label{eq-recession-cone}
  \rec \Cscr :=
  \set{d | \mbox{$x+\lambda d \in \Cscr$ for every $\lambda\ge0$ and $x\in\Cscr$}}.
\end{equation}
See~\cref{fig-recession-example} for an illustration.
Vectors in the recession cone can also be thought of as ``horizon
points'' of $\Cscr$ \cite[p.~60]{rockafellar1970convex}. With respect
to the gauge and support functions to the set $\Cscr$, vectors
$u\in\rec\Cscr$ have the property that $\gauge\Cs(u) = 0$ and
$\sigma\Cs(u) = +\infty$; see \cref{prop-support-properties}. We must
therefore be prepared to consider cases where these functions can take
on infinite values. Far from being a nuisance, this property is useful
in modelling important cases in optimization.

The following proposition collects standard results regarding gauge
and support functions and establishes the polarity correspondence
between these two functions.  The proofs of these claims can be found in
standard texts, notably Rockafellar~\cite{rockafellar1970convex} and
Hiriart-Urruty and Lemarechal~\cite{hiriart-urruty01}. These proofs
typically rely on properties of conjugate functions. Because our
overall theoretical development does not require conjugacy, we provide
self-contained proofs that depend only on properties of closed convex
sets.
  
\begin{proposition}[Properties of gauges and support
  functions] \label{prop-support-properties} Let $\Cscr\subset\Real^n$ be a
  closed convex set that contains the origin, and $\Dscr\subset\Real^n$ be an
  arbitrary set. The following statements hold.
  \begin{propenum}
    \item (Closure and convex hull) $\sigma_{\scriptscriptstyle\Dscr} = \sigma_{\cl\conv\Dscr}$.
    \item (Polarity) \label{prop-support-properties-polarity}
      $\gauge\Cs = \sigma\Csp$.
    \item (Linear transformation) \label{prop-linear-transform} For a
      linear operator $M$ with adjoint $M^*$,
      \[
        \gauge_{\scriptscriptstyle M\inv\Cscr}(x) = \gauge_{\Cscr}(Mx),
          \text{and}
        \sigma_{\scriptscriptstyle M\Cscr}(z) = \sigma\Cs(M^* z),
      \]
      where we interpret $M\inv\Cscr = \set{x \mid Mx\in\Cscr}$ and $M\Cscr=\set{M x \mid x\in\Cscr}$.
    \item (Scaling) \label{prop-scaling}
      $\alpha\gauge\Cs=\gauge_{\frac1\alpha\Cscr}$ and
      $\alpha\sigma\Cs=\sigma_{\alpha\Cscr}$ for all $\alpha>0$.
    \item (Domains) \label{prop-support-properties-domain}
      $\dom\gauge\Cs = \cone\Cscr$ \ and \
      $\dom\sigma\Cs=(\rec\Cscr)\polar$.
    \item (Bijection) 
    $\Cscr = \set{x\in\Re^n| \ip{x}{z} \leq \sigma\Cs(z) \mbox{\ for all\ } z \in\Re^n}.$
    \item (Subdifferential) \label{prop-support-properties-subdiff}
      $\partial\sigma\Cs(z)
       =\set{x\in\Cscr|\ip x z = \sigma\Cs(z)}
       $.
    \item (Recession cones) \label{prop-support-properties-recession}
      $\gauge\Cs(x)=0$ if and only if $x\in\rec\Cscr$.
    \end{propenum}
\end{proposition}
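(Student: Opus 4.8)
The plan is to separate the eight claims into routine unwindings of the definitions in~\eqref{eq:4} and~\eqref{eq-recession-cone} on the one hand, and the two places where genuine convex-analytic input is needed on the other, arranging matters so that the harder claims feed the easier ones. Concretely I would derive (b) from the bipolar identity $\Cscr^{\circ\circ}=\Cscr$ already quoted from \cite[Theorem~14.5]{rockafellar1970convex}, prove (f) from the separating-hyperplane theorem, and then obtain (g) cheaply from (f) and positive homogeneity; claims (a), (c), (d), the gauge half of (e), and both directions of (h) are direct.

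For (a), since $x\mapsto\ip{x}{z}$ is linear and continuous, its supremum is unchanged when $\Dscr$ is enlarged to $\conv\Dscr$ and then to $\cl\conv\Dscr$, giving $\sigma\Ds=\sigma_{\cl\conv\Dscr}$. For (b) I would record the rescaled bipolar statement $x\in\lambda\Cscr\iff\ip{x}{z}\le\lambda$ for every $z\in\Cscr\polar$; the set of admissible $\lambda$ is therefore $[\sigma\Csp(x),\infty)$, whose infimum is $\sigma\Csp(x)$ (nonnegative because $0\in\Cscr\polar$), and this infimum is exactly $\gauge\Cs(x)$. Claims (c) and (d) are substitutions: $x\in\lambda M\inv\Cscr\iff Mx\in\lambda\Cscr$ gives the gauge identity, $\ip{Mx}{z}=\ip{x}{M^*z}$ gives the support identity, and (d) is the case $M=\alpha\Id$. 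The gauge half of (e) holds because $\gauge\Cs(x)<\infty$ says precisely $x\in\lambda\Cscr$ for some $\lambda\ge0$, i.e.\ $x\in\bigcup_{\lambda\ge0}\lambda\Cscr=\cone\Cscr$.

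For (f) the inclusion $\Cscr\subseteq\{x\mid\ip{x}{z}\le\sigma\Cs(z)\ \forall z\}$ is immediate from the definition of $\sigma\Cs$ as a supremum, while the reverse uses closedness and convexity of $\Cscr$: any $x\notin\Cscr$ is strictly separated from $\Cscr$ by some $z$ with $\ip{x}{z}>\sup_{y\in\Cscr}\ip{y}{z}=\sigma\Cs(z)$. Granting (f), claim (g) follows. If $x\in\Cscr$ and $\ip{x}{z}=\sigma\Cs(z)$, then $\sigma\Cs(w)\ge\ip{x}{w}=\sigma\Cs(z)+\ip{x}{w-z}$ for all $w$, so $x\in\partial\sigma\Cs(z)$. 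Conversely, a subgradient $x$ obeys $\sigma\Cs(w)\ge\sigma\Cs(z)+\ip{x}{w-z}$ for all $w$; testing $w=0$ and $w=2z$ and using $\sigma\Cs(0)=0$ and $\sigma\Cs(2z)=2\sigma\Cs(z)$ forces $\ip{x}{z}=\sigma\Cs(z)$, after which the subgradient inequality reduces to $\sigma\Cs(w)\ge\ip{x}{w}$ for all $w$, i.e.\ $x\in\Cscr$ by (f).

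The weight of the proof falls on (h) and the support half of (e), both relying on closedness. In (h) the forward direction is easy---$\gauge\Cs(x)=0$ together with $0\in\Cscr$ and convexity gives $\mu x\in\Cscr$ for all $\mu\ge0$---but to meet the ``for every $x\in\Cscr$'' form of~\eqref{eq-recession-cone} I must upgrade this to $y+\mu x\in\Cscr$ for arbitrary $y\in\Cscr$; the device is the convex combination $(1-\mu/n)y+(\mu/n)(nx)=(1-\mu/n)y+\mu x$, whose limit as $n\to\infty$ stays in $\Cscr$ exactly because $\Cscr$ is closed, and the reverse direction of (h) simply reads this chain backwards. The support half of (e), $\dom\sigma\Cs=(\rec\Cscr)\polar$, is the main obstacle. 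The inclusion $\dom\sigma\Cs\subseteq(\rec\Cscr)\polar$ is clean: a recession direction $d$ with $\ip{d}{z}>0$ sends $\ip{x+\lambda d}{z}\to+\infty$ and hence $\sigma\Cs(z)=+\infty$. The reverse---finiteness of $\sigma\Cs(z)$ whenever $z$ is polar to every recession direction---is the delicate point, and I would prove it by contradiction: from an unbounded maximizing sequence $x_k\in\Cscr$ extract a normalized limit $x_k/\|x_k\|\to d$, show $d\in\rec\Cscr$ using closedness, and contradict $z\in(\rec\Cscr)\polar$. This compactness/normalization step, where one must also be attentive to barrier-cone boundary effects, is where I expect the real care to be required.
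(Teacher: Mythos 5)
Seven of your eight arguments are sound and essentially coincide with the paper's: (a), (c), (d), (f), the gauge half of (e), and (h) follow the same routes (your (h) upgrades $\mu x\in\Cscr$ to $y+\mu x\in\Cscr$ by a convex-combination-and-closedness limit, where the paper invokes subadditivity and positive homogeneity of $\gauge\Cs$; both work), while (b) and (g) are harmless variants: you derive (b) from the bipolar identity, where the paper computes $\gauge\Csp=\sigma\Cs$ by converting the defining infimum into a supremum (which still needs $\Cscr^{\circ\circ}=\Cscr$ to reach the stated form), and in (g) you test $w=0$ and $w=2z$ where the paper uses subadditivity of $\sigma\Cs$ together with $w=0$.

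The genuine gap is in the support half of (e), precisely the step you flagged as delicate: your compactness argument for $(\rec\Cscr)\polar\subseteq\dom\sigma\Cs$ does not produce a contradiction. From $\sigma\Cs(z)=+\infty$ you extract $x_k\in\Cscr$ with $\ip{x_k}{z}\to+\infty$ and, after normalization, $d=\lim_k x_k/\norm{x_k}\in\rec\Cscr$; but to contradict $z\in(\rec\Cscr)\polar$ you need $\ip{d}{z}>0$, whereas $\ip{d}{z}=\lim_k\ip{x_k}{z}/\norm{x_k}$ can vanish even though $\ip{x_k}{z}\to+\infty$. Concretely, let $\Cscr=\{(s,t)\in\Re^2 : t\ge s^2\}$ and $z=(1,0)$. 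Then $\rec\Cscr=\{0\}\times\Re_+$ and $z\in(\rec\Cscr)\polar$, yet $\sigma\Cs(z)=\sup\{s : t\ge s^2\}=+\infty$, and the maximizing sequence $x_k=(k,k^2)$ normalizes to $d=(0,1)$, which is orthogonal to $z$. This example shows more: no argument can close the gap, because the claimed identity $\dom\sigma\Cs=(\rec\Cscr)\polar$ is false in general; what is true is $\cl(\dom\sigma\Cs)=(\rec\Cscr)\polar$, equivalently $\rec\Cscr=(\dom\sigma\Cs)\polar$ (cf.\ the discussion of barrier cones in \cite[Section~14]{rockafellar1970convex}), and the barrier cone $\dom\sigma\Cs$ need not be closed---in the example it equals $\{z : z_2<0\}\cup\{0\}$. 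You should also know that the paper's own proof stumbles at the same spot, only more quietly: it proves $(\dom\sigma\Cs)\polar\subseteq\rec\Cscr$ and then implicitly polarizes both sides, which yields only $(\rec\Cscr)\polar\subseteq\cl(\dom\sigma\Cs)$. So your diagnosis of where the difficulty lies was accurate; the resolution, however, is not a sharper compactness argument but a closure operation (or an extra hypothesis, such as polyhedrality or boundedness of $\Cscr$) in the statement itself.
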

\begin{proof}
\begin{itemize}

\item[(a)] Because $\Dscr \subseteq \cl\conv\Dscr$, it follows that
  $\sigma_{\Dscr}(z) \leq \sigma_{\cl\conv\Dscr}(z)$ for all
  $z$. Hence it is sufficient to prove that
  $\sigma_{\cl\conv\Dscr}(z) \leq \sigma_{\Dscr}(z)$ for all $z$. Fix
  any $d \in \cl\conv\Dscr$ and choose an arbitrary sequence
  $\{d_k\}_{n = 1}^\infty \subset \conv\Dscr$ such that $d_k \to
  d$. Each element of the sequence $\{d_k\}$ is a convex combination of
  points in $\Dscr$, and so it follows that
  $\ip{d_k}{z} \leq \sigma_{\Dscr}(z)$ for all $k$ and $z$. Since
  $d_k \to d$ and $\ip{d_k}{z} \leq \sigma_{\Dscr}(z)$ for all $n$, it
  follows that $\ip{d}{z} \leq \sigma_{\Dscr}(z)$. But $d$ is
  arbitrary, and so we can conclude that
  $\sigma_{\cl\conv\Dscr}(z) \leq \sigma_{\Dscr}(z)$.
	
\item[(b)] The gauge to $\Cscr\polar$ (see~\eqref{eq:4}) can be
  expressed as
  $\gauge\Csp(x) = \inf\set{\lambda>0 | \lambda\inv x \in
    \Cscr^{\circ}}$. Thus, from the definition of the polar
  set~\eqref{eq-polar-set},
  \begin{align*}
    \gauge\Csp(x) &= \inf\set{\lambda>0 | \ip{\lambda\inv x}{y} \leq 1, \ \forall y \in \Cscr}
    \\&= \bigl[\sup\set{\mu>0 | \ip{\mu x}{y} \leq 1,\ \forall y \in \Cscr}\bigr]\inv
    \\&= \bigl[\sup\set{\mu>0 | \ip{x}{y} \leq \mu\inv,\ \forall y \in \Cscr}\bigr]\inv
    \\&= \sup_{y \in \Cscr}\, \ip x y
    = \sigma\Cs(x).
\end{align*} 

\item[(c)]
  From the Minkowski functional expression for the gauge function, 
  \begin{align*}
    \gauge\Cs(Mx) &= \inf\set{\lambda \mid Mx \in \lambda C}
    \\&= \inf\set{\lambda \mid x \in M^{-1}(\lambda C)}
    \\&= \inf\set{\lambda \mid x \in \lambda M^{-1}C} = \gauge_{M^{-1}C}(x).
  \end{align*}
  Also, from the definition of the adjoint of a linear operator,
  \begin{align*}
    \sigma_{M\Cscr}(z) &= \sup\set{\ip{M x}{z}| x \in \Cscr}
    \\&= \sup\set{\ip{x}{M^* z}| x \in \Cscr}
       = \sigma\Cs(M^* z).
  \end{align*}

\item[(d)]
  By defining $M = \alpha$, the proof follows directly from~\cref{prop-linear-transform}. 

\item[(e)]

  It follows from the definition of the domain that
  $\dom\gauge\Cs = \cone\Cscr$. So we only need to show that
  $\dom\sigma\Cs=(\rec\Cscr)\polar$. First we show that
  $\dom\sigma\Cs \subseteq (\rec\Cscr)\polar$. For any
  $x \in \dom\sigma\Cs$, the support $\sigma\Cs(x)$ is finite. Thus
  for any $d \in \rec\Cscr$,
  \[\ip{c + \lambda d}{x} < \infty, \quad \forall c \in \Cscr, \lambda
    \geq 0;\] see~\eqref{eq-recession-cone}. It follows that $\ip d x \leq 0$, and thus
  $x \in (\rec\Cscr)\polar$. For the other direction, instead we will
  show that $(\dom\sigma\Cs)\polar \subseteq \rec\Cscr$. Assume
  $x \in (\dom\sigma\Cs)\polar$, then for any $c \in \Cscr$,
  $\lambda \geq 0$, $y \in \dom\sigma\Cs$, we have
  \[
    \ip{c + \lambda x}{y} = \ip{c}{y} + \lambda\ip{x}{y} \leq
    \ip{c}{y} \leq \sigma\Cs(y).
  \]
  Since $\Cscr$ is a closed convex set, we can conclude that
  $c + \lambda x \in \Cscr$, for all $c \in \Cscr$ and
  $\lambda \geq 0$. Therefore, $x \in \rec\Cscr$.

\item[(f)]Let
  $\Dscr = \set{x\in\Re^n| \ip{x}{z} \leq \sigma\Cs(z) \mbox{\ for
      all\ } z \in\Re^n}$. By the definition of support function, it
  can be easily shown that $\Cscr \subseteq \Dscr$. So we only need to
  prove that $\Dscr \subseteq \Cscr$.  Assume there is some
  $x \in \Dscr$ such that $x \notin \Cscr$. Then by the separating
  hyperplane theorem, there exists $s \in \Re^n$ such that
\[\ip{s}{x} > \sup\set{\ip{s}{y}| y \in \Cscr} = \sigma\Cs(s).\]
This leads to a contradiction. Therefore, we can conclude that $\Cscr = \Dscr$.
	
\item[(g)] Let $\Dscr = \set{x\in\Cscr|\ip x z =
    \sigma\Cs(z)}$. First, we show that
  $\Dscr \subseteq \partial\sigma\Cs(z)$. Assume $x \in \Dscr$, then
  for any $w \in \Re^n$,
  \[\sigma\Cs(w) \geq \ip{x}{w} = \sigma\Cs(z) + \ip{x}{w - z}.\]
  Thus, $x \in \partial\sigma\Cs(z)$. Next, we prove that
  $\partial\sigma\Cs(z) \subseteq \Dscr$. Assume
  $x \in \partial\sigma\Cs(z)$, then
  \begin{equation}\label{eqn:helper}
    \sigma\Cs(w) \geq \sigma\Cs(z) + \ip{x}{w - z}, \quad \forall w \in \Re^n
  \end{equation}
  By the subadditivity of support functions, we must have
  \begin{equation} \label{eq:14}
    \sigma\Cs(z) + \sigma\Cs(w - z) \geq \sigma\Cs(w), \quad \forall w \in \Re^n.
  \end{equation}
  It then follows from~\eqref{eqn:helper} and~\eqref{eq:14} that
  $\sigma\Cs(v) \geq \ip{x}{v}$ for all $v$.  By part (d), we can thus
  conclude that $x \in \Cscr$. Now let $w = 0$ in~\eqref{eqn:helper},
  it follows that $\ip{x}{z} \geq \sigma\Cs(z)$.  Therefore, it
  follows that $\ip{x}{z} = \sigma\Cs(z)$ and thus $x \in \Dscr$.

\item[(h)]

  First, assume $\gauge\Cs(x) = 0$. Then for any $\xhat \in \Cscr$ and
  $\lambda \geq 0$,
  \[
    \gauge\Cs(\hat{x} + \lambda x) \leq \gauge\Cs(\hat{x}) +
    \lambda\gauge\Cs(x) = \gauge\Cs(\hat{x}).
  \]
  It follows that $\xhat + \lambda x \in \Cscr$ and therefore
  $x\in\rec\Cscr$. Next, assume $x\in\rec\Cscr$. Then by the
  definition of recession cone, we have $\lambda x \in \Cscr$ for all
  $\lambda \geq 0$, which implies $\gauge\Cs(x) = 0$.

\end{itemize}	
\end{proof}

\subsection{Exposed faces}

A face $\Fscr\Cs$ of a convex set $\Cscr$ is a subset with the property that for
all elements $x_1$ and $x_2$ both in $\Cscr$, and for all $\theta\in (0,1)$,
\[
  \theta x_1 + (1-\theta) x_2\in \Fscr\Cs \quad \iff\quad x_1\in \Fscr\Cs
  \text{and} x_2\in \Fscr\Cs.
\]
Note that the face must itself be convex. A particular face $\Fscr\Cs(d)$ is
\emph{exposed} by a direction $d\in\Re^n$ if the face is contained in the
supporting hyperplane with normal $d$:
\begin{equation} \label{eq:face} \Fscr\Cs(d) = \set{c\in\Cscr|\ip c d =
    \sigma\Cs(d)} =\partial\sigma\Cs(d),
\end{equation}
where the second equality follows from \cref{prop-support-properties-subdiff}.
The elements of the exposed face $\Fscr\Cs(d)$ are thus precisely those elements
of $\Cscr$ that achieve the supremum for $\sigma\Cs(d)$.

In \cref{sec-convexanalysis-atomic} we will consider atomic sets that are not
convex. In that case, the exposed face of the convex hull of those atoms
coincides with the convex hull of the exposed atoms. In particular, if
$\Ascr=\{a_i\}_{i\in\Iscr}$ is any collection of atoms and
$\Cscr=\conv(\Ascr\cup\{0\})$, then
\[
  \Fscr\Cs(d) = \conv\Escr\As(d).
\]

It follows from positive homogeneity of the support function $\sigma\Cs$ that
\begin{equation}\label{eq:11}
  \Fscr_{\scriptscriptstyle\!\alpha\Cscr}(d)=\alpha\Fscr\Cs(d)
  \text{and}
  \Fscr\Cs(\alpha d)=\Fscr\Cs(d) \quad \forall \alpha>0.
\end{equation}
For nonpolyhedral sets, it is possible that some faces may not be exposed
\cite[p.~163]{rockafellar1970convex}.

\subsection{Alignment characterization}

The definition of alignment in \cref{def:alignment} rests on the tightness of
the polar inequality~\eqref{eq:polar-inequality}. In this section we tie the
alignment condition to a more geometric concept based on exposed faces, which
uncovers the dual relationship between a pair of aligned
vectors. We proceed in two steps. First, we characterize alignment for a pair of
vectors that are in the unit level sets, respectively, for a gauge and its
polar. Second, we generalize this result to any vectors in the respective domains.

\begin{proposition}[Normalized alignment]
  \label{prop-normalized-alignment}
  For any elements $x\in\Cscr$ and $z\in\Cscr\polar$, the following conditions
  are equivalent:
  \begin{propenum}
  \item $\ip x z = 1$,
  \item $z\in\Fscr_{\scriptscriptstyle\Cscr\polar}(x)$,
  \item $x\in\Fscr_{\scriptscriptstyle\Cscr}(z)$.
  \end{propenum}
  Moreover, these statements imply that $x\in\bnd\Cscr$ and
  $z\in\bnd\Cscr\polar$.
\end{proposition}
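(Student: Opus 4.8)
The plan is to treat condition~(a) as the hub: I would show that~(a) implies both~(b) and~(c) directly, then close the loop with (c)\,$\Rightarrow$\,(a), obtaining (b)\,$\Rightarrow$\,(a) for free from the symmetry between $\Cscr$ and $\Cscr\polar$. The engine is a two-sided bound on $\ip{x}{z}$ built from the definition of the support function, the level-set descriptions $\Cscr=\set{x|\gauge\Cs(x)\le1}$ and $\Cscr\polar=\set{z|\sigma\Cs(z)\le1}$ of~\eqref{eq:15}, the polarity identity $\gauge\Cs=\sigma\Csp$ of \cref{prop-support-properties-polarity}, and the face identity $\Fscr\Cs(z)=\partial\sigma\Cs(z)=\set{c\in\Cscr|\ip{c}{z}=\sigma\Cs(z)}$ of~\eqref{eq:face}. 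The self-duality $\Cscr^{\circ\circ}=\Cscr$ gives the companion identity $\sigma\Cs=\gauge\Csp$, which is what lets me interchange the roles of $(\Cscr,x)$ and $(\Cscr\polar,z)$ so that~(b) and~(c) become mirror images of one another.

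First I would record the sandwich. For $x\in\Cscr$ and $z\in\Cscr\polar$, the definition of $\sigma\Cs$ gives $\ip{x}{z}\le\sigma\Cs(z)$, and $\sigma\Cs(z)\le1$ by~\eqref{eq:15}; dually, $\ip{x}{z}\le\sigma\Csp(x)=\gauge\Cs(x)\le1$, so that
\[
  \ip{x}{z}\ \le\ \sigma\Cs(z)\ \le\ 1
  \text{and}
  \ip{x}{z}\ \le\ \gauge\Cs(x)\ \le\ 1 .
\]
If~(a) holds, i.e.\ $\ip{x}{z}=1$, every inequality in both chains is forced to be an equality. From the first chain $\ip{x}{z}=\sigma\Cs(z)$, hence $x\in\partial\sigma\Cs(z)=\Fscr\Cs(z)$, which is~(c); from the second $\ip{x}{z}=\sigma\Csp(x)$, hence $z\in\partial\sigma\Csp(x)=\Fscr\Csp(x)$, which is~(b). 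The same equalities give $\gauge\Cs(x)=1$ and $\sigma\Cs(z)=1$; since positive homogeneity yields $\gauge\Cs((1+\epsilon)x)=(1+\epsilon)\gauge\Cs(x)>1$, the point $(1+\epsilon)x$ lies outside $\Cscr$ for every $\epsilon>0$, and letting $\epsilon\downarrow0$ places $x$ on $\bnd\Cscr$; the symmetric argument places $z$ on $\bnd\Cscr\polar$. This settles the forward implications and the ``moreover'' clause simultaneously.

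For the converse I would prove (c)\,$\Rightarrow$\,(a), with (b)\,$\Rightarrow$\,(a) following by the $(\Cscr,x)\leftrightarrow(\Cscr\polar,z)$ symmetry. By~\eqref{eq:face}, membership $x\in\Fscr\Cs(z)$ yields $\ip{x}{z}=\sigma\Cs(z)$, so the entire substance of the reverse direction is to certify that this common value equals $1$ rather than some smaller number in $[0,1]$. This is the step I expect to be the main obstacle: the face identity by itself pins $\ip{x}{z}$ only to $\sigma\Cs(z)$, and for a $z$ lying strictly inside $\Cscr\polar$ one has $\sigma\Cs(z)<1$, so the conclusion genuinely requires the normalization that $z$ lie on the unit level set, i.e.\ $\sigma\Cs(z)=1$ (equivalently $z\in\bnd\Cscr\polar$; and $\gauge\Cs(x)=1$ for the mirror branch). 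Under that normalization $\ip{x}{z}=\sigma\Cs(z)=1$ and the implication closes at once. I would therefore make this normalization explicit as the reading of ``unit level sets,'' and note that it is exactly the boundary membership recorded in the ``moreover,'' so that the equivalence and the boundary conclusion are two faces of the same fact.
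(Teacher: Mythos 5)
Your forward direction is correct and is in substance the paper's own argument: for $x\in\Cscr$ and $z\in\Cscr\polar$, the chains $\ip{x}{z}\le\sigma\Cs(z)\le1$ and $\ip{x}{z}\le\sigma\Csp(x)=\gauge\Cs(x)\le1$ collapse to equalities when $\ip{x}{z}=1$, which yields (b), (c), and the boundary claim simultaneously. The differences are cosmetic: the paper proves (a)$\,\Leftrightarrow\,$(b) and then obtains (a)$\,\Leftrightarrow\,$(c) by exchanging the roles of $x$ and $z$ via $\Cscr^{\circ\circ}=\Cscr$, whereas you derive (b) and (c) directly and reserve the symmetry for the converse; you also actually prove the ``moreover'' clause, which the paper's proof never addresses.

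The obstacle you isolate in the converse is not a weakness of your write-up; it is a genuine gap in the statement and in the paper's own proof. The paper dispatches (b)$\,\Rightarrow\,$(a) with ``follows by simply reversing this chain of arguments,'' but reversing the chain only gives $\ip{x}{z}=\sigma\Csp(x)=\gauge\Cs(x)$, which equals $1$ only under the normalization $\gauge\Cs(x)=1$. Without it the implication is false: take $\Cscr$ the unit Euclidean disk in $\Re^2$ (so $\Cscr\polar=\Cscr$), $x=(\tfrac12,0)$, $z=(1,0)$; then $\sigma\Csp(x)=\tfrac12$ is attained at $z$, so $z\in\FaceCp(x)$ and (b) holds, yet $\ip{x}{z}=\tfrac12\ne1$ and $x\notin\FaceC(z)$, so both (a) and (c) fail. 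Thus the equivalence genuinely requires $\gauge\Cs(x)=1$ and $\sigma\Cs(z)=1$, exactly as you say, and this normalized reading is all the paper ever uses: \cref{cor-general-alignment} invokes the proposition only for $\xhat=x/\gauge\Cs(x)$ and $\zhat=z/\sigma\Cs(z)$, which satisfy $\gauge\Cs(\xhat)=\sigma\Cs(\zhat)=1$. One small caution on your parenthetical: $\sigma\Cs(z)=1$ implies $z\in\bnd\Cscr\polar$, but the converse can fail---for the strip $\Cscr=\{x\in\Re^2 : |x_1|\le1\}$ the polar is the segment $[-1,1]\times\{0\}$, all of whose points are boundary points while most have $\sigma\Cs(z)<1$---so the added hypothesis should be stated through the function values, not through boundary membership.
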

\begin{proof}
  Suppose that (a) holds. By the definition \eqref{eq-polar-set} of
  the polar set $\Cscr\polar$,
  \[
    \sigma\Csp(x) = \sup\set{\ip x u|u\in\Cscr\polar}\le1
    \quad\forall x\in\Cscr.
  \]
  Then (a) implies that $z$ achieves the supremum above, and so
  by~\eqref{eq:face}, this holds if and only if $z\in\FaceCp(x)$.
  Thus (b) holds. The fact that (b) implies (a) follows by simply
  reversing this chain of arguments.

  To prove that (a) is equivalent to (c), we only need to use the
  assumption that $\Cscr$ is closed and contains the origin, and hence
  that $\Cscr=\Cscr^{\circ\circ}$. This allows us to reuse the
  arguments above by exchanging the roles of $x$ and $z$, and $\Cscr$
  and $\Cscr\polar$.
\end{proof}

The following corollary characterizes the general alignment condition
without assuming that the vector pair $(x,z)$ is normalized.

\begin{corollary}[Alignment] \label{cor-general-alignment} Let
  $x\in\cone\Cscr$ and $z\in\cone\Cscr\polar$ be any two vectors. The
  pair $(x,z)$ is $\Cscr$-aligned if any of the following equivalent
  conditions holds:
  \begin{corenum}
  \item \label{cor-general-pi} $\ip x z = \gauge\Cs(x)\cdot\sigma\Cs(z)$,
  \item $z \in \cone\Fscr\Csp(x) + \rec\Cscr\polar$,
  \item $x \in \cone\Fscr\Cs(z) + \rec\Cscr$.
  \end{corenum}
\end{corollary}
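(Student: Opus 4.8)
The plan is to prove Corollary~\ref{cor-general-alignment} by reducing the general (unnormalized) case to the normalized case already established in \cref{prop-normalized-alignment}. The key observation is that, by positive homogeneity of the gauge and support functions, any $x\in\cone\Cscr$ with $\gauge\Cs(x)>0$ can be scaled to $\xhat = x/\gauge\Cs(x)$, which lies in $\bnd\Cscr$ by~\eqref{eq:15}, and similarly $\zhat = z/\sigma\Cs(z)$ lies in $\bnd\Cscr\polar$ when $\sigma\Cs(z)>0$. Then condition~(a), namely $\ip x z = \gauge\Cs(x)\cdot\sigma\Cs(z)$, is equivalent after dividing through to $\ip{\xhat}{\zhat}=1$, which is exactly condition~(a) of \cref{prop-normalized-alignment}. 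So the main engine of the proof is already in hand, and what remains is to translate the face conditions (b) and (c) of \cref{prop-normalized-alignment} across the scaling.

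First I would dispatch the equivalence of (a) with the definition of $\Cscr$-alignment: this is immediate, since $\Cscr$-alignment means the polar inequality~\eqref{eq:polar-inequality} holds as an equation, and that inequality is precisely $\ip x z \le \gauge\Cs(x)\cdot\sigma\Cs(z)$ by~\eqref{eq:4} and \cref{prop-support-properties-polarity}. Next, to establish (a)$\iff$(b), I would use the relation $\Fscr\Csp(x)=\Fscr\Csp(\xhat)$ from the second part of~\eqref{eq:11} (faces are invariant under positive scaling of the exposing direction) together with \cref{prop-normalized-alignment} applied to the normalized pair. The scaling $z=\sigma\Cs(z)\,\zhat$ means $\zhat\in\Fscr\Csp(\xhat)$ translates to $z\in\sigma\Cs(z)\,\Fscr\Csp(x)\subset\cone\Fscr\Csp(x)$; the appearance of the $+\rec\Cscr\polar$ term handles exactly the directions along which the inner product and the support function both fail to see any change. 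The symmetric argument using $\Cscr=\Cscr^{\circ\circ}$ gives (a)$\iff$(c).

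The hard part will be handling the boundary cases where one or both of $\gauge\Cs(x)$ and $\sigma\Cs(z)$ vanish, so that the rescaling above is ill-defined. By \cref{prop-support-properties-recession}, $\gauge\Cs(x)=0$ precisely when $x\in\rec\Cscr$, and by \cref{prop-support-properties-domain} the relevant degeneracy on the dual side corresponds to recession directions of $\Cscr\polar$. In these cases condition~(a) reduces to $\ip x z\le 0$ holding as an equality $\ip x z=0$ (since the product on the right is zero or the inequality is vacuous), and I expect the recession-cone summands $\rec\Cscr\polar$ and $\rec\Cscr$ in (b) and (c) to be exactly what absorbs these directions: a recession direction $x\in\rec\Cscr$ should satisfy (c) trivially because $x\in\rec\Cscr$ is one of the summands, and the $\ip x z=0$ condition is what makes this consistent with $z\in\dom\sigma\Cs=(\rec\Cscr)\polar$. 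Carefully verifying that the $\cone\Fscr\Cs(z)+\rec\Cscr$ decomposition captures every aligned $x$ in these degenerate regimes---and in particular that the exposed-face piece and the recession piece combine correctly rather than overlapping or leaving a gap---is the step I would scrutinize most closely, since it is where the geometry of unbounded sets genuinely goes beyond the clean normalized picture.
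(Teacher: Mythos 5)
Your handling of the nondegenerate case is exactly the paper's argument: normalize $\xhat=x/\gauge\Cs(x)$ and $\zhat=z/\sigma\Cs(z)$, invoke \cref{prop-normalized-alignment}, recover condition (a) by multiplying $\ip{\xhat}{\zhat}=1$ through by $\gauge\Cs(x)\cdot\sigma\Cs(z)$, and transport the face conditions across the scaling using the invariance $\Fscr\Cs(\alpha d)=\Fscr\Cs(d)$ of \eqref{eq:11}. (Note that since $x\in\cone\Cscr$ and $z\in\cone\Cscr\polar$, both $\gauge\Cs(x)$ and $\sigma\Cs(z)$ are automatically finite, so the only degeneracies are the vanishing ones.) Up to that point the proposal and the paper coincide.

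The gap is that the degenerate case is never argued: you state what you ``expect'' the recession summands to do and defer the verification, yet that verification is the only content of the corollary not already contained in \cref{prop-normalized-alignment}, and it is where the paper does its remaining work. For the record, when $\gauge\Cs(x)=0$ the paper argues: $x\in\rec\Cscr$ by \cref{prop-support-properties-recession}, so (c) holds; it asserts $\ip x z=0$, which gives (a) since the right-hand side is $0\cdot\sigma\Cs(z)=0$; and since $\sigma\Csp(x)=\gauge\Cs(x)=0$, any $z\in\cone\Cscr\polar$ with $\ip x z=0$ lies in $\cone\Fscr\Csp(x)$ by \eqref{eq:face}, giving (b). Moreover, your own flagged worry---that the exposed-face and recession pieces might ``leave a gap''---is not idle caution; it is precisely where the argument is delicate. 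For $x\in\rec\Cscr$ and $z\in\cone\Cscr\polar\subseteq(\rec\Cscr)\polar$ one obtains only $\ip x z\le 0$, not equality: take $\Cscr=\{t\in\Re : t\le 0\}$, $x=-1$, $z=1$; then $\gauge\Cs(x)=\sigma\Cs(z)=0$ and both (b) and (c) hold (each right-hand side is a half-line containing the relevant point), yet $\ip x z=-1$, so (a) fails and the pair is not aligned. So in the degenerate regime the implications (b)~$\Rightarrow$~(a) and (c)~$\Rightarrow$~(a) cannot be ``verified'' as you hope---even the paper's one-line claim that finiteness of $\sigma\Cs(z)$ forces $\ip x z=0$ is too quick---and a complete write-up must confront this directly (for instance by establishing only the implications out of condition (a) in that regime, or by building the orthogonality requirement $\ip x z=0$ into the statement). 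Leaving this as an expectation is the missing step, and it is not a routine one.
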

\begin{proof}
  First suppose that $\gauge\Cs(x)$ and $\sigma\Cs(z)$ are
  positive. Then the equivalence of the statements follows by applying
  \cref{prop-normalized-alignment} to the normalized pair of vectors
  $\xhat:=x/\gamma\Cs(x)$ and $\zhat:=z/\sigma\Cs(z)$. In that case
  Part 1 follows immediately after multiplying
  $\ip{\xhat}{\zhat\pthinsp}=1$ by the quantity
  $\gauge\Cs(x)\cdot\sigma\Cs(z)$. Parts 2 and 3 follow from the fact
  that for any convex set $\Dscr$ and any vector $d$,
  $\Face\Dscr(d)=\Face\Dscr(\alpha d)$ for any positive scalar
  $\alpha$; see~\eqref{eq:11}.

  We now show equivalence of the statements in the case where
  $\gauge\Cs(x)=0$. By \cref{prop-support-properties-recession}, this
  holds if and only if $x\in\rec\Cscr$, but not in $\Face\C(z)$. Thus
  Part 3 holds. But because $\sigma\Cs(z)$ is finite, $x$ and $z$
  together satisfy $\ip x z =0$. Thus, Part 1 holds. To show that Part
  2 holds, notice that $\sigma\Csp(x)=\gauge\Cs(x)=0$, and so
  by~\eqref{eq:face},
  \[
    \cone\Face\Csp(x) = \set{ u |  \ip x u = 0},
  \]
  which certainly contains $z$. Thus, Part 2 holds.  The case with
  $\sigma\Cs(z)=0$ follows using the same symmetric argument used in
  the proof of \cref{prop-normalized-alignment}.
\end{proof}

Relative to~\cref{prop-normalized-alignment}, this last result is most
interesting when one of the elements in the aligned pair $(x,z)$
belongs to the recession cones of $\Cscr$ or its polar
$\Cscr\polar$. In that case, the alignment condition in
\cref{cor-general-pi} requires the vectors to be orthogonal,
i.e., $\ip x z=0$. But if $x\in\rec\Cscr$, the requirement that
$z$ is in the polar $(\rec\Cscr)\polar$ implies that $x$ and $z$ are
extreme rays of their respective recession cones that are orthogonal
to each other. This situation is illustrated in
\cref{fig-recession-example}.

\begin{figure}[t]
  \centering
  \begin{tabular}{@{}c@{\hspace{.5in}}c@{}}
    \includegraphics[page=2]{illustrations}
    & \includegraphics[page=3]{illustrations}
      \end{tabular}
      \caption{The contours of the gauge function of $\Cscr$ (left)
        and of $\Cscr\polar$ (right). All vectors $x$ in the recession
        cone of $\Cscr$ have gauge value $\gauge\Cs(x)=0$. A vector
        $x_1$ can only be $\Cscr$-aligned with another vector $z_1$ if
        they are orthogonal to each and each is an extreme ray,
        respectively, of $\rec\Cscr$ and
        $\dom\gamma\Csp=(\rec\Cscr)\polar$. Each of the pairs $(x_1,z_1)$ and $(x_2,z_2)$ are
        $\Cscr$-aligned.}
  \label{fig-recession-example}
\end{figure}

\begin{example}[Convex cones]
  \label{eq:convex-cones}
  Suppose that $\Cscr=\Kscr$ is a cone. Because a cone is its own
  recession cone, $\rec\Kscr=\Kscr$. Then for any pair $(x,z)$ that is
  $\Kscr$-aligned, \cref{cor-general-alignment} asserts
  \[
  \ip x z = 0      \quad\Longleftrightarrow\quad
  x\in\Kscr        \quad\Longleftrightarrow\quad
  z\in\Kscr\polar.
\]
This assertion effectively generalizes~\cref{example-linear-subspaces}, which
made the same assertion for linear subspaces.

For convex cones, we thus see that alignment is equivalent to
orthogonality. This principle applies to general convex sets $\Cscr$
using the lifting technique described in \cref{sec-polarity}. Take any
pair $(x,z)\in\Cscr\times\Cscr\polar$ that is $\Cscr$-aligned, which
implies $\ip x z=1$. Then $\xbar:=(x,1)\in\Kscr\Cs$ and
$\zbar:=(z,-1)\in\Kscr\Csp$, and
\[
  \ip*{\xbar}{\zbar} = \ip x z -1 = 0.
\]
This coincides with tightness of the inequality~\eqref{eq:10}, which
characterizes polarity of cones.
\end{example}

The next example shows how the alignment property is connected to
complementarity in conic programming~\cite[Section 5.3.6]{bertsekas2009convex}. Section~\ref{sec-manifestations} explores a more
general connection between alignment and optimality in convex optimization.
\begin{example}[Alignment as optimality in conic optimization] \label{example-conic-opt}

Consider the pair of dual linear conic optimization problems
    \begin{equation*}
      \begin{array}{l@{\enspace}l}
        \minimize{x} & \ip c x \\ 
        \st          & Fx = b,\  x\in \Kscr,
      \end{array}
      \qquad
      \begin{array}{l@{\enspace}l}
        \maximize{y,\,z} & \ip b y\\ 
        \st            & F\T y - z = c, \ z\in \Kscr\polar,
      \end{array}
    \end{equation*}
    where $F:\Re^n\to\Re^m$ is a linear operator,
    $(b,c)\in\Re^m\times\Re^n$ are arbitrary vectors, and
    $\Kscr\polar$ is the polar cone of $\Kscr$.

    The feasible triple $(x,y,z)$ is optimal if strong
    duality holds, i.e.,
    \begin{align*}
      0 = \ip c x - \ip b y = \ip{F\T y-z}{x} - \ip{Fx}{y} = \ip x z.
    \end{align*}
    But because $x\in\Kscr$ and $z\in\Kscr\polar$, it follows
    from~\cref{eq:convex-cones} that $x$ and $z$ are $\Kscr$-aligned.
\end{example}
  
\subsection{Alignment as orthogonal decomposition}

The Moreau decomposition for cones
\cite[Theorem~3.2.5]{hiriart-urruty01} can be used to separate an
arbitrary vector into components that are aligned with respect to any
convex set $\Cscr$.

  Every element, respectively, in $\Kscr\Cs$ and $\Kscr\Csp$ is a
  nonnegative multiple of $(x,1)$ and $(z,1)$ for some vectors
  $x\in\Cscr$ and $z\in\Cscr\polar$. Thus, for any vector
  $(s,\alpha)\in\Re^n\times\Real$, Moreau's decomposition implies
  unique nonnegative scalars $\alpha_x$ and $\alpha_z$ such that
  \begin{align*}
    (s,\alpha) = \proj_{\Kscr\Cs}(s,\alpha)
                 + \proj_{\Kscr\Csp}(s,\alpha)
             = \alpha_x (x,1) + \alpha_z(z,-1).
  \end{align*}
  Orthogonality of the decomposition implies
  $\alpha_x\cdot \alpha_z (\ip x z - 1) = 0$. Then the pair of vectors
  $\xhat = \alpha_x x$ and $\zhat=\alpha_z z$ are $\Cscr$-aligned
  because
  \[
    \ip{\xhat}{\zhat} = \alpha_x\cdot\alpha_z
  \]
  and their corresponding gauge and support values are
  $\alpha_x = \gauge\Cs(\xhat)$ and $\alpha_z = \sigma\Cs(\zhat)$. See
  \cref{fig:moreau}.

  \begin{figure}
    \centering
    \hspace{3.5cm}\includegraphics[page=8]{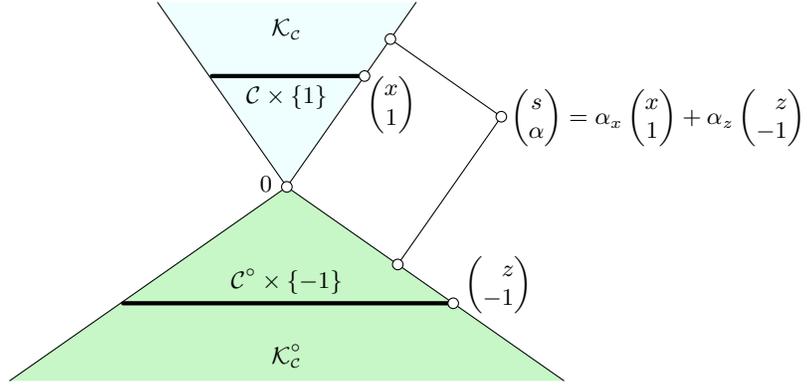}
    \caption{Any vector $(s,\alpha)\in\Re^{n}\times\Re$ can be
      decomposed into orthogonal components in the cones generated by
      a convex set $\Cscr\subset\Re^n$ and its polar. The components
      of the decomposition $s=\alpha_xx+\alpha_zz$ are
      $\Cscr$-aligned.}
    \label{fig:moreau}
  \end{figure}

\section{Alignment with respect to atomic sets} \label{sec-convexanalysis-atomic}

The discussion in \cref{sec-convexanalysis} applies to any convex set, and used
the symbol $\Cscr$ to refer to such general sets. We now turn our attention to
properties of gauges and support functions generated by atomic sets
$\Ascr\subset\Real^n$ not necessarily convex. As we did with~\eqref{eq:4}, we thus adopt the notation
\[
  \gauge\As:=\gauge_{\scriptscriptstyle\convA},
  \quad \sigma\As:=\sigma_{\scriptscriptstyle\convA},
  \quad \FaceA:=\Fscr_{\scriptscriptstyle\convA}.
\]

\subsection{Atomic decomposition}

Two different expressions are given in~\eqref{eq:2} and~\eqref{eq:4}
for a gauge function $\gauge\As$. The next result highlights
the decomposition implicit in the Minkowski functional
to an atomic set. This decomposition allows us to establish other
useful results that involve only the atomic set, rather than its
convex hull, as in \cref{sec-convexanalysis}. This equivalence is used by
Chandrasekaran~\cite{chandrasekaran2012convex} and others. 

\begin{proposition}[Gauge equivalence]
  \label{prop-guage-equivalence}
  For any set $\Ascr\subset\Re^n$ and $\convA:=\conv(\Ascr\cup\{0\})$,
  the following equivalence holds:
  \begin{equation} \label{eq:5}
    \gauge\As(x)
    :=\inf\set{\lambda\ge0|x\in\lambda\convA}
    =\inf_{c_a} \Set{\sum_{a\in\Ascr}c_a |
      x = \sum_{a\in\Ascr}c_a a,\ c_a\ge0\ \forall a\in\Ascr}.
  \end{equation}
\end{proposition}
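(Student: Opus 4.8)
The plan is to prove the identity~\eqref{eq:5} by showing that the Minkowski infimum on the left and the atomic infimum on the right bound each other. Throughout I would read every atomic sum $\sum_{a\in\Ascr}c_a a$ as having only finitely many nonzero coefficients, so that it is a finite convex-combination statement and no convergence question arises even when $\Ascr$ is infinite. I would also observe at the outset that $x\in\lambda\convA$ for some $\lambda\ge0$ holds exactly when $x$ admits a nonnegative atomic decomposition, so the two infima are taken over nonempty sets simultaneously and are otherwise both $+\infty$; this matches the two domains and lets me argue over finite values.

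For the direction ``atomic $\le$ Minkowski'', I would take any $\lambda\ge0$ with $x\in\lambda\convA$ and manufacture an atomic decomposition of weight at most $\lambda$. Writing $x=\lambda y$ with $y\in\convA$, I expand $y$ as a finite convex combination $y=\sum_i\theta_i a_i$ over points $a_i\in\Ascr\cup\{0\}$ with $\theta_i\ge0$ and $\sum_i\theta_i=1$. Dropping the term carried by the origin, which contributes nothing to $x$, the nonnegative coefficients $c_{a_i}:=\lambda\theta_i$ satisfy $x=\sum_a c_a a$ and $\sum_a c_a\le\lambda\sum_i\theta_i=\lambda$. Hence the atomic infimum is at most $\lambda$, and taking the infimum over admissible $\lambda$ gives this direction.

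For the reverse direction ``Minkowski $\le$ atomic'', I would take any feasible atomic decomposition $x=\sum_a c_a a$ and set $\lambda:=\sum_a c_a$. If $\lambda>0$, the normalized weights $\theta_a:=c_a/\lambda$ sum to one, so $x/\lambda=\sum_a\theta_a a$ is a convex combination of atoms, giving $x/\lambda\in\convA$ and therefore $x\in\lambda\convA$; thus the Minkowski infimum is at most $\lambda$. If $\lambda=0$, then every $c_a=0$ forces $x=0$, and since $0\in\convA$ the Minkowski infimum is $0=\lambda$. Taking the infimum over decompositions closes this direction, and combining the two inequalities yields equality.

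The argument is bookkeeping rather than deep, so the point to handle carefully is the role of the origin. Including $\{0\}$ in $\convA$ is exactly what allows the weight $\theta_i$ placed on the origin to absorb the gap between $\sum_i\theta_i=1$ and $\sum_a c_a\le\lambda$ in the first direction, and it is also what makes the degenerate case $\lambda=0$, $x=0$ consistent in the second. I would therefore track the origin's weight explicitly in the forward construction and treat $\lambda=0$ as a separate case in the reverse one.
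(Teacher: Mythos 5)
Your proof is correct and takes essentially the same route as the paper's: both rest on the correspondence $c_a = \lambda\theta_a$ between the convex-combination weights defining membership in $\lambda\convA$ and the atomic coefficients, the paper compressing it into a single chain of equalities over $(\lambda,\bar c_a)$ with $\lambda$ eliminated at the end. Your two-inequality version is, if anything, slightly more careful than the paper's one-line chain, since you explicitly track the weight placed on the origin and the degenerate case $\lambda=0$, which the paper absorbs implicitly by taking infima.
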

\begin{proof}
  Take any $x\in\cone\convA$, since otherwise the sets above are
  empty, and by convention, both expressions have infinite
  value. Then, because we can exclude a convex
  combination of the elements of $\convA$,
  \begin{align*}
    \inf\set{\lambda\ge0 | x\in\lambda\convA}
    &= \inf_{\lambda,\,\bar c_a}\biggl\{\lambda\ge0 \biggm| x = \lambda\sum_{a\in\Ascr}\bar c_aa,\ \sum_{a\in\Ascr}\bar c_a=1,\ \bar c_a\ge0\ \forall a\in\Ascr\biggr\}
  \\&=  \inf_{\lambda,\,c_a}\biggl\{\lambda \biggm| x = \sum_{a\in\Ascr}c_aa,\ \sum_{a\in\Ascr}c_a=\lambda,\ c_a\ge0\ \forall a\in\Ascr\biggr\},
  \end{align*}
  which, after eliminating $\lambda$, yields the required equivalence
  shown in~\eqref{eq:5}.
\end{proof}
Some atomic sets, such as the set of rank-1 outer products used to
define the nuclear-norm ball (cf. \cref{example-nuclear-norm}), may be
uncountably infinite. However, when $x\in\cone\convA$, the gauge value
is always finite and the sum $\sum_{a\in\Ascr}c_a$ necessarily converges to a
finite value. This ``sum form'' of the gauge function is useful because it
provides a ``one-norm-like'' interpretation of gauges in terms of the
minimal conic decomposition $\{c_a a\}_{a\in\Ascr}$, which further
suggests that gauges are the natural promoters of atomic sparsity.

\begin{proposition}[Finite support]
  For any point $x\in\cone\convA$, a finite support set $\suppa(x)\in\supp\As(x)$ always exists.
\end{proposition}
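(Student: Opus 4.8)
The plan is to obtain the support set by applying Carath\'eodory's theorem to a \emph{minimal} atomic decomposition of $x$, so that the only real work is to produce such a minimal decomposition in the first place. Since $x\in\cone\convA=\dom\gauge\As$ by \cref{prop-support-properties-domain}, the value $g:=\gauge\As(x)$ is finite. If $x=0$ then $g=0$ and the empty set is vacuously a support set, so I assume $x\neq0$ from here on.

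Assume first that $g>0$. The key step is attainment: I would produce a decomposition $x=\sum_a c_a a$ whose weights sum exactly to $g$. Working through the Minkowski form in \cref{prop-guage-equivalence}, there are scalars $\lambda\downarrow g$ with $x/\lambda\in\convA$; since $\convA$ is closed by \cref{blanket-assumption} and $x/\lambda\to x/g$ as $\lambda\downarrow g$, this forces $x/g\in\convA=\conv(\Ascr\cup\{0\})$. With $x/g$ exhibited as a point of the convex hull, Carath\'eodory's theorem in $\Re^n$ writes it as a convex combination $x/g=\sum_{i=0}^{n}\theta_i v_i$ of at most $n+1$ points $v_i\in\Ascr\cup\{0\}$, with $\theta_i\ge0$ and $\sum_i\theta_i=1$. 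Discarding the term with $v_i=0$ (say $v_0=0$, weight $\theta_0$) together with all zero-weight terms, and scaling by $g$, yields $x=\sum_j c_j a_j$ with finitely many $a_j\in\Ascr$ and $c_j:=g\theta_{i_j}>0$. This is a valid decomposition, so its weight sum $g(1-\theta_0)$ is at least the minimal value $g$; hence $\theta_0=0$ and $\sum_j c_j=g$. Thus $\{a_j\}_j$ is a finite support set, i.e.\ a member of $\supp\As(x)$, using at most $n$ atoms.

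The step I expect to be the main obstacle is attainment, not the Carath\'eodory reduction: one must know that the infimum defining $g$ is achieved by an \emph{exact} decomposition, which is precisely what closedness of $\convA$ supplies when $g>0$. The genuinely delicate situation is $g=0$ with $x\neq0$; here \cref{prop-support-properties-recession} gives $x\in\rec\convA$, and no decomposition can sum to $0$, so a support set cannot exist in this degenerate case. It is ruled out exactly when $\convA$ is bounded (equivalently $\rec\convA=\{0\}$), as holds for every atomic set appearing in our examples, and under that proviso the argument above delivers the claimed finite support set.
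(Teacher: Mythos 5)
Your main argument (the case $\gauge\As(x)>0$) is correct and is essentially the paper's own proof: normalize $\xhat:=x/\gauge\As(x)$, place it in the convex hull, invoke Carath\'eodory's theorem, and rescale the convex weights by $\gauge\As(x)$. You are in fact more careful than the paper on two points: you prove attainment explicitly (closedness of $\convA$ forces $x/g\in\convA$), whereas the paper simply asserts $\xhat\in\cl\conv\Ascr$ and proceeds; and you use minimality of the gauge to force the origin's Carath\'eodory weight to vanish, so the rescaled weights sum exactly to $\gauge\As(x)$. (One small slip: the count is at most $n+1$ atoms, not $n$, since Carath\'eodory need not place the origin among the representing points.)

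The divergence that matters is the degenerate case $\gauge\As(x)=0$ with $x\neq0$, i.e.\ $x\in\rec\convA\setminus\{0\}$. You declare that no support set exists there and repair the statement with a boundedness proviso ($\rec\convA=\{0\}$) that the proposition does not carry. The paper instead handles this case by convention: when $\gauge\As(x)=0$, the empty set is taken to be the unique element of $\supp\As(x)$---the vacuous reading of \cref{def:support}, under which ``every element of $\Sscr_\Ascr(x)$ has a strictly positive coefficient'' holds trivially---and the same convention reappears in the proof of the Support Identification proposition (``$\supp\As(x)$ contains only the empty set''). You are right that $\emptyset$ fails the reconstruction equation in \eqref{eq:3} when $x\neq0$, so you have put your finger on a genuine looseness in the definition; but a proof of the proposition as stated should adopt (or at least flag) this convention rather than amend the statement. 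Moreover, your factual claim that boundedness ``holds for every atomic set appearing in our examples'' is false: the linear-subspace atoms, the total-variation atoms (which contain $\cone(\pm e)$), and the weighted-trace-norm atoms are all unbounded, and the recession-cone discussion around \cref{cor-general-alignment} and \cref{fig-recession-example} exists precisely to cover such sets. As written, then, your argument establishes a restricted version of the proposition rather than the proposition itself.
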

\begin{proof}
  If $\gauge\As(x)=0$, the assertion is trivially true, since the
  empty set is the only element of $\supp\As(x)$. Now suppose
  $\gauge\As(x)>0$, and define the normalized vector
  $\xhat=x/\gauge\As(x)$. Then $\xhat\in\cl\conv\Ascr$, and
  $\gauge\As(x)=1$. By Carath\'eory's Theorem
  \cite[Theorem~17.1]{rockafellar1970convex}, there exists a finite
  convex decomposition of $\xhat$ in terms of at most $n+1$ atoms in
  $\Ascr$. That is, there exists a set $\Sscr\subset\Ascr$ with $n+1$
  elements such that
  \[
    \xhat = \sum_{a\in\Sscr}\chat_aa,
    \quad
    \sum_{a\in\Sscr}\chat_a = 1,
    \quad
    \chat_a>0,\ \forall a\in\Sscr.
  \]
  Taking $c_a=\gauge\As(x)\chat_a$ for each $a\in\Sscr$ gives a solution  to the equations in \eqref{eq:3}, showing that $\Sscr\in\supp\As(x)$.
\end{proof}

The support may not be unique, even if it is minimal, e.g., there
exist no other supports with smaller cardinality.
  
\begin{example}[Non-uniqueness]
  Consider the atomic set $\Ascr=\set{(\pm1,\pm1,1)}\subset\Re^3$. The
  point $x=(0,0,2)$ can be expressed in at least three different ways,
  \begin{align*}
    x &= (1,1,1) + (-1,-1,1)
    \\&= (1,-1,1) + (-1,1,1)
    \\&= \half[(1,1,1) + (-1,-1,1) + (1,-1,1) + (-1,1,1)],
  \end{align*}
  all of which give gauge value $\gauge\As (x)=2$. In this case, we write
  \[
    \supp\As\left(\bmat{ 0 \\ 0 \\ 2 } \right) = 
    \left\{
      \left\{\! 
        \bmatr{ 1 \\ 1 \\ 1}, 
        \bmatr{-1 \\ -1 \\ 1}
      \!\right\},
      \,
    \left\{\! 
      \bmatr{1 \\ -1 \\ 1},
      \bmatr{-1 \\ 1 \\ 1}
    \!\right \},
    \,
    \half\left\{\! 
      \bmatr{ 1 \\  1 \\ 1},
      \bmatr{-1 \\ -1 \\ 1},
      \bmatr{ 1 \\ -1 \\ 1},
      \bmatr{-1 \\  1 \\ 1}
    \!\right \}\right\}.
\]
Any element of $\supp\As(x)$ is a valid support set of $x$ with
respect to the atomic set $\Ascr$.  However, for functions commonly
used to promote sparsity, often the support set is always unique.
\end{example}

\cref{prop-guage-equivalence} establishes that the gauge value
$\gauge\As(x)$ of a vector $x$ yields a conical decomposition whose
coefficient sum is minimal. If another vector $v$ can be conically
decomposed as a subset of the atoms of $x$, then the support for $v$
is a subset of the support of $x$, i.e.,
$\suppa(v)\subset\suppa(x)$. This is established in the following
proposition.

\begin{proposition}[Same support sets] \label{prop:same-support-sets}
  Suppose that $\Sscr_\Ascr(x)\subseteq \Ascr$ is a support set for
  some vector $x\in\cone\convA$ with $\gamma(x)>0$.  Then any vector $v$ that has a valid
  conic decomposition in terms of the support $\Sscr\As(x)$, i.e.,
  \begin{equation}\label{eq:28} 
    v = \sum_{\ \mathclap{a\in \suppa(x)}}c_a a, \quad c_a \geq 0,
  \end{equation}
  has gauge value
  \[
    \gauge\As(v) = \sum_{\ \mathclap{a\in \suppa(x)}} c_a. 
  \] 
\end{proposition}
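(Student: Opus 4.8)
The statement to prove is the equality $\gauge\As(v)=\sum_{a\in\suppa(x)}c_a$, and I would split it into the two matching inequalities. The inequality ``$\le$'' is immediate: the decomposition~\eqref{eq:28} is itself a feasible point for the infimum in the sum form~\eqref{eq:5} of $\gauge\As(v)$ (assign coefficient $0$ to every atom outside $\suppa(x)$), so $\gauge\As(v)\le\sum_{a\in\suppa(x)}c_a$. All the work is in the reverse inequality, and the plan is to establish it by contradiction through an \emph{exchange of decompositions} that crucially exploits the strict positivity of the support coefficients guaranteed by \cref{def:support}.

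To set up the exchange, I would write the support decomposition of $x$ furnished by \cref{def:support} as $x=\sum_{a\in\suppa(x)}\cbar_a a$, where $\cbar_a>0$ for every $a\in\suppa(x)$ and $\sum_{a\in\suppa(x)}\cbar_a=\gauge\As(x)$. Suppose, for contradiction, that $\gauge\As(v)<\sum_{a\in\suppa(x)}c_a$. Since $\gauge\As(v)\ge0$, this already forces $\sum_{a\in\suppa(x)}c_a>0$, so some $c_a>0$. Moreover, by the definition of the infimum in~\eqref{eq:5} there is a conic decomposition $v=\sum_{b\in\Ascr}d_b\,b$ with $d_b\ge0$ and $\sum_{b\in\Ascr}d_b<\sum_{a\in\suppa(x)}c_a$.

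The key step is to choose the scalar $t=\min\set{\cbar_a/c_a \mid a\in\suppa(x),\ c_a>0}$, which is a minimum over a nonempty set of positive numbers and hence satisfies $t>0$. By construction $\cbar_a-tc_a\ge0$ for every $a\in\suppa(x)$, so adding and subtracting $t\sum_{a\in\suppa(x)}c_a\,a=tv$ in the decomposition of $x$ and then replacing $tv$ by its cheaper decomposition $\sum_{b\in\Ascr}td_b\,b$ yields
\[
  x=\sum_{\mathclap{a\in\suppa(x)}}(\cbar_a-tc_a)\,a+\sum_{b\in\Ascr}td_b\,b,
\]
a conic decomposition of $x$ with all coefficients nonnegative. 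Summing its coefficients gives
\[
  \sum_{\mathclap{a\in\suppa(x)}}(\cbar_a-tc_a)+\sum_{b\in\Ascr}td_b
  =\gauge\As(x)-t\Bigl(\sum_{\mathclap{a\in\suppa(x)}}c_a-\sum_{b\in\Ascr}d_b\Bigr)
  <\gauge\As(x),
\]
because $t>0$ and $\sum_{b}d_b<\sum_{a}c_a$. This exhibits a decomposition of $x$ whose coefficient sum is strictly below $\gauge\As(x)$, contradicting the minimality in~\eqref{eq:5}. Hence $\gauge\As(v)\ge\sum_{a\in\suppa(x)}c_a$, and combining with the easy inequality yields the claim.

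The step I expect to be the crux is the choice of $t$ and the verification that $t>0$: this is exactly where the defining property $\cbar_a>0$ of a \emph{support} set (as opposed to an arbitrary decomposition) is indispensable, since it is what leaves room to subtract a positive multiple of $v$'s decomposition from that of $x$ while keeping all coefficients nonnegative. An alternative, more geometric route would manufacture a single vector $z$ exposing every atom of $\suppa(x)$ (so that $\ip a z=\sigma\As(z)$ for all $a\in\suppa(x)$) and then invoke the polar inequality~\eqref{eq:polar-inequality} to bound $\gauge\As(v)$ from below; however, that approach must grapple with whether the exposing supremum is attained, a technicality the exchange argument sidesteps entirely.
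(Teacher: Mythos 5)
Your proof is correct and follows essentially the same route as the paper's: both are exchange arguments by contradiction, splicing a scaled copy of a hypothetically cheaper decomposition of $v$ into the strictly positive support decomposition of $x$ and contradicting the minimality defining $\gauge\As(x)$. The only differences are cosmetic---the paper scales by $\beta=[\min_{a}\chat_a]/[\max_{a}c_a]$ while you use the tighter $t=\min\{\cbar_a/c_a \mid c_a>0\}$, and your version cleanly covers the degenerate case $v=0$, where the paper's $\beta$ would be undefined.
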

\begin{proof}
  Suppose, by way of contradiction, that there exists a conic decomposition of
  $v$ with respect to $\Ascr$ that is not given by~\eqref{eq:28}, i.e.,
  \[
    v = \sum_{a\in \Ascr} c'_a a,\qquad c_a' \geq 0, \qquad
    \sum_{a\in\Ascr} c'_a < \sum_{\mathclap{a\in\Sscr\As(x)}}c_a.
  \] 
  Because $\suppa(x)$ is the support set of
  $x$, there exist positive coefficients $\chat_a$ where
  \[
          x = \sum_{\mathclap{a\in\suppa(x)}} \chat_a a,
          \qquad
          \gauge\As(x) = \sum_{\mathclap{a\in \suppa(x)}} \chat_a.
  \]
  But a valid decomposition of $x$ is 
  \[
          x = \beta v + x - \beta v = \beta \sum_{a\in \Ascr} c'_a a +
          \sum_{a\in \suppa(x)} (\chat_a - \beta c_a) a,
  \]
  where we pick
        $\beta = [\min_{a\in \suppa(x)} \chat_a]/[\max_{a\in
          \suppa(x)} c_a]$ to guarantee that all the coefficients are
        nonnegative. 
         Then by definition of gauges,
  \[
          \sum_{a\in \suppa(x)} \chat_a \leq \beta \sum_{a\in\Ascr}
          c'_a + \sum_{a\in \suppa(x)} (\chat_a - \beta c_a) \iff
          \sum_{a\in\Ascr} c'_a \geq \sum_{a\in\suppa(x)} c_a.
  \]
  This implies that the decomposition of $v$ with respect to
        $\suppa(x)$ is in fact the \emph{minimal} decomposition of $v$ with respect
        to $\Ascr$, and the sum of the coefficients indeed giving its gauge value.  
\end{proof}

\begin{proposition}[Support identification]
  For any set $\Ascr\subset\Re^n$, the elements $x$ and $z$ in $\Re^n$
  are $\Ascr$-aligned if and only if
  $\Sscr_\Ascr(x) \subseteq \Escr_\Ascr(z)$ for all
  $\Sscr_\Ascr(x)\in \supp_\Ascr(x)$.
\end{proposition}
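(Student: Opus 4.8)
The plan is to collapse both implications into a single chain of (in)equalities obtained by expanding $\ip x z$ along a support decomposition of $x$. By \cref{def:support}, any $\Sscr\in\supp\As(x)$ carries coefficients $c_a>0$ with $x=\sum_{a\in\Sscr}c_aa$ and $\gauge\As(x)=\sum_{a\in\Sscr}c_a$, and such an $\Sscr$ exists whenever $x\in\cone\convA$ (the finite-support proposition above). Expanding the inner product and using that every atom satisfies $\ip a z\le\sigma\As(z)$—since $a\in\convA$, by the definition of the support function in \eqref{eq:4}—gives
\[
  \ip x z=\sum_{a\in\Sscr}c_a\ip a z\le\sum_{a\in\Sscr}c_a\,\sigma\As(z)=\gauge\As(x)\cdot\sigma\As(z).
\]
This reproduces the polar inequality \eqref{eq:polar-inequality}, and because every $c_a>0$, the bound is attained exactly when $\ip a z=\sigma\As(z)$ for each $a\in\Sscr$—that is, precisely when $\Sscr\subseteq\Escr\As(z)$, by the definition of the exposed set in \eqref{eq:13}.

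With this per-decomposition equivalence in hand, both directions are immediate. If $(x,z)$ is $\Ascr$-aligned, then the two ends of the display coincide, forcing the termwise inequality to be an equality and hence $\Sscr\subseteq\Escr\As(z)$; as $\Sscr$ was an arbitrary support set, the inclusion holds for all of $\supp\As(x)$. Conversely, if the inclusion holds for every support set, then fixing any single one collapses the chain to an equality, which is exactly the equality case of the polar inequality \eqref{eq:polar-inequality}, i.e.\ $\Ascr$-alignment (\cref{def:alignment}). Note that the two endpoints $\ip x z$ and $\gauge\As(x)\sigma\As(z)$ do not depend on the chosen decomposition, so in fact either every support set lies in $\Escr\As(z)$ or none does; the universally quantified condition could thus be weakened to an existential one without changing its content.

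The only remaining work is to reconcile the statement with the degenerate configurations in which no positive support decomposition exists. If $\gauge\As(x)=0$ then $x\in\rec\convA$ by \cref{prop-support-properties-recession}; when $\convA$ is bounded—as in \cref{example-one-norm} and \cref{example-nuclear-norm}—this forces $x=0$, the sole support set is empty, and alignment reduces to $\ip 0 z=0$, matching the vacuously true inclusion. When $z\notin\dom\sigma\As$, so that $\sigma\As(z)=+\infty$, the exposed set $\Escr\As(z)$ is empty, the inclusion forces every support set to be empty, and the same reduction applies. I expect this boundary bookkeeping—keeping the infinite-valued conventions in \eqref{eq:polar-inequality} consistent with the possible nonattainment of the infimum defining $\gauge\As$—to be the only delicate point; the core of the argument is the single-decomposition expansion above, which is entirely routine.
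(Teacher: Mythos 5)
Your proof is correct where the paper's is, and the forward direction takes a genuinely different route. The reverse implication (inclusion forces alignment) is the same computation the paper performs: expand $\ip x z$ along a support decomposition and use that every supporting atom attains $\sigma\As(z)$. For the forward implication, however, the paper argues by contradiction and leans on \cref{prop:same-support-sets}: supposing some supporting atom $a$ is unexposed, it forms $v = x - c_a a$, invokes that proposition to get $\gauge\As(v) = \gauge\As(x) - c_a$, and derives the strict inequality $\ip x z < \gauge\As(x)\cdot\sigma\As(z)$. You avoid that machinery entirely: alignment forces equality throughout
\[
  \ip x z = \sum_{a\in\Sscr} c_a \ip a z \le \sum_{a\in\Sscr} c_a\,\sigma\As(z) = \gauge\As(x)\,\sigma\As(z),
\]
so $\sum_{a\in\Sscr} c_a\bigl(\sigma\As(z)-\ip a z\bigr)=0$ is a sum of nonnegative terms with $c_a>0$, whence each term vanishes and $\Sscr\subseteq\Escr\As(z)$. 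This buys three things: it removes the dependence on \cref{prop:same-support-sets} (whose only role in the paper is to serve this proof); it handles both implications with a single equality-case analysis; and it yields your correct side observation that the inclusion holds for every support set as soon as it holds for one, so the universal quantifier in the statement could be weakened to an existential one. The one respect in which you are less complete than the paper is the degenerate case $\gauge\As(x)=0$: you dispatch it only when $\convA$ is bounded (so that $x=0$), whereas the paper also covers nonzero recession directions by declaring the inclusion vacuously true. Note, though, that this covers only the forward implication; in that regime the reverse implication is genuinely problematic for both proofs, since for nonzero $x\in\rec\convA$ the hypothesis is vacuous while $\ip x z$ need not equal $\gauge\As(x)\cdot\sigma\As(z)=0$ (take $\Ascr$ a closed ray and $z$ in its polar with $\ip x z<0$). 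That defect sits in the proposition's statement for unbounded atomic sets, not in the core of your argument.
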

\begin{proof}
First, we show that if $x$ and $z$ are $\Ascr$-aligned, then $\Sscr_\Ascr(x) \subseteq \Escr_\Ascr(z)$ for all
  $\Sscr_\Ascr(x)\in \supp_\Ascr(x)$.  Because the elements $x$ and $z$ are $\Ascr$-aligned,
  \begin{equation}
    \ip x z = \gamma\As(x)\cdot\sigma\As(z).
    \label{eq:proof-helper-1}
  \end{equation}
  Now suppose that $\gamma\As(x) > 0$. Then all support sets
  $\Sscr\As(x) \in \supp\As(x)$ are nonempty. Suppose that
  $a\in \Sscr\As(x)$ but $a\not \in\Escr\As(z)$. We will show that
  this leads to a contradiction. By definition, $a\not\in\Escr\As(z)$
  implies that
  \begin{equation}
    \ip a z < \sigma\As(z).
    \label{eq:proof-helper-2}
  \end{equation}
  Define $v = x - c_a a$, which is the vector that results from
  deleting the atom $a$ from the support of $x$. Then by
  \cref{prop:same-support-sets},
  \begin{equation} \label{eq:12}
    \gamma\As(v) = \gamma\As(x) - c_a.
  \end{equation}
  Thus,
  \[
    \ip x z \overset{(a)}{=} \underbrace{\langle v , z\rangle}_{\leq
      \gamma\As(v) \sigma\As(z) } + \underbrace{c_a\langle a,z\rangle
    }_{<\ c_a \sigma\As(z)} \overset{(b)}{<} (\gamma\As(v) + c_a)
    \sigma\As(z) \overset{(c)}{=} \gamma\As(x)\cdot \sigma\As(z)
  \]
  where (a) follows by construction ($x = v + c_a a$); (b) follows
  from the polar inequality \eqref{eq:polar-inequality} and assumption
  \eqref{eq:proof-helper-2}; and (c) follows from \eqref{eq:12}. But
  this contradicts \eqref{eq:proof-helper-1}, and therefore
  $a\in\Sscr_\Ascr(x)$ implies $ a\in \Escr_\Ascr(z)$, i.e.,
  $\suppa(x)\subseteq\Escr\As(z)$.
  
  Now assume  $\gauge\As(x)=0$. Then
  $x\in\rec\convA$ and $\supp\As (x)$ contains only the empty set. Since empty sets are also a subset of $ \Escr_\Ascr(z)$for any $z$, the statement is trivially true. 

  Next, we show that if $\Sscr_\Ascr(x) \subseteq \Escr_\Ascr(z)$ for all
  $\Sscr_\Ascr(x)\in \supp_\Ascr(x)$, then $x$ and $z$ are $\Ascr$-aligned. By the definition of support set~\ref{eq:3}, we can assume that 
  \[
    \gauge\As(x) = \sum_{\mathclap{a\in\suppa(x)}} c_a,
    \qquad x = \sum_{\mathclap{a\in\suppa(x)}} c_aa,
    \qquad c_a > 0\ \forall a\in\suppa(x).
  \]
  Then by~\cref{cor-general-alignment}, we only need to show that $\ip{x}{z} = \gauge\As(x)\sigma\As(z)$. Indeed, 
  \begin{align*}
    \ip{x}{z} &= \sum_{\mathclap{a\in\suppa(x)}}c_a\ip{a}{z}
    \\&= \bigg(\sum_{\mathclap{a\in\suppa(x)}}c_a\bigg)\sigma\As(z)
    = \gauge\As(x)\sigma\As(z),
  \end{align*}
  where the second equality follows from the assumption that $\Sscr_\Ascr(x) \subseteq \Escr_\Ascr(z)$. 
\end{proof}

\subsection{Examples}

The general alignment result described by \cref{cor-general-alignment} includes
the possibility that aligned vectors may contain elements from the recession
cone of the atomic set. Elements in the recession cone may be interpreted as
directions, rather than just points in the set. The presence of a non-trivial
recession cone must be considered in practice, and is exhibited, for example, by
all seminorms: these are nonnegative functions that behave like norms with the
exception that they may be zero at nonzero points and are not necessarily
symmetric. The next example describes a common atomic set that is composed by
points and directions.

\begin{example}[Total variation]
  The anisotropic total-variation norm of an $n$-vector $x$ is defined as
\[
  \|x\|_{\scriptscriptstyle\rm TV}
  = \sum_{i=2}^n |x_i-x_{i-1}| = \|Dx\|_1,\qquad
  D = \begin{bmatrix*}[r]
    1 & -1 & 0 & \cdots & 0& 0 \\
    0 &1 & -1 &  \cdots & 0 & 0\\
    \vdots &\vdots & \vdots & \ddots & \vdots  & \vdots\\
    0 & 0 & 0 & \cdots & 1 & -1
    \end{bmatrix*}.
\]
The matrix $D$ has a 1-dimensional nullspace spanned by the constant vector of
all ones $e$, and so $De=0$. The TV norm is then
a seminorm, and thus the atomic set must include a direction
of recession, given by the range of $e$.  Interestingly, the atomic
set that induces this norm is not unique: for any matrix
$A=[a_1,\ldots,a_{n-1}]$ where $DA = I$, the corresponding TV norm is the gauge with
respect to the atoms
\begin{equation*}
\Ascr = \{\pm a_1,\ldots,\pm a_{n-1} \} + \cone (\pm e).
\end{equation*}
To see this, write
\[
  x = \sum_{i=1}^{n-1} c_i a_i + c_e e = A c + c_e e
\]
for some scalars $c_1,\ldots,c_{n-1}$ and $c_e$. (The scalars are not
restricted to be nonnegative because the set of atoms includes vectors
with both positive and negative signs.)  Note that the $n-1$ vectors
$a_i$ span $\Null(e)$, so the above decomposition always exists, with
unique values for $c_i$ and $c_e$. The solution to \eqref{eq:3} thus
determines the unique decomposition
\[
  x = \sum_{i=1}^{n-1} \underbrace{(s_ic_i)}_{c_a}\cdot\underbrace{(s_ia_i)}_{a} + c_e e, \quad s_i = \sign(c_i),
\]
where $(s_ic_i)$ are the coefficients for the atoms
$(s_ia_i)\in\Ascr$, and $c_e$ is the coefficient for the recession
direction $e$.  Then
\[
\|Dx\|_1 = \|DAc\|_1 = \|c\|_1 = 
\gauge\As(x).
\]
If $x\in\cone(\pm e)=\rec\convA$, then $\gauge\As(x)=0$.

To see that the atomic set is not unique, note that $DA = I$ for
any matrix of the form $A = B+es^T$, where 
\begin{equation*}
B = [b_1,\ldots,b_{n-1}] := \begin{bmatrix}
1 & 1 & \cdots & 1&1\\
0 & 1  & \cdots & 1&1 \\
\vdots  & \vdots & \ddots & \vdots  &\vdots\\
0 & 0  & \cdots & 1 & 1\\
0 & 0  & \cdots & 0 & 1\\
0 & 0  & \cdots & 0 & 0
\end{bmatrix}
\end{equation*}
and $s\in \Re^{n-1}$ is an arbitrary vector.  However, the gauge
function with respect to the atomic set formed by the columns of $B$ and $e$ 
is well defined. Specifically, note that the range of the matrix $[B\ e]$
spans all of $\Re^n$. Thus the decomposition
\begin{equation}\label{eq:7}
  x = Bc+c_ee
\end{equation}
uniquely defines the vector $c$ and the scalar $c_e$, and
$\gauge\As(x) = \|Dx\|_1 = \|c\|_1$, as before.

The support function for this set of atoms is
\begin{align*}
  \sigma\As(z)
  &= \sup\set{\ip x z | x = Bc+c_ee,\ \infnorm{c}\le1}
\\&= \sup\set{\ip{c}{B\T z} + c_e\ip e z | \infnorm{c}\le1 }.
\end{align*}
Note that if $z\not\in\Null(e)$, then $\sigma\As(z)$ clearly
unbounded because $c_e$ is not constrained. This confirms the fact
that the domain of $\sigma\As$ is $(\rec\convA)\polar=\Null(e)$, as
shown by \cref{prop-support-properties-domain}.
\cref{cor-general-alignment} asserts that if $z$ is $\Ascr$-aligned
with $x$, then it exposes all of the atoms that contribute
non-trivially towards the decomposition~\eqref{eq:7}. In particular,
$\suppa(x)\subset\Escr\As(z)$, where one such decomposition gives
\[
  \suppa(x) = \set{ \sign((Dx)_i) b_i | x_i \ne 0 }
  \text{and}
  \Escr\As(z) = \set{\sign(\ip{b_i}{z}) b_i | \max_j\; \ip{b_j}{z} = \abs{\ip{b_i}{z}} }.
\]
Notice that these alignment conditions do not depend on the specific
choice of the representation $\Ascr$, and are defined only with
respect to the columns of $B$, which are fixed.
\end{example}

Group norms arise in applications where the nonzero entries of a
vector are concentrated in patters across the vector. Applications
include source localization, functional magnetic resonance imaging, and others
 \cite{cottraoengakreu:2005,GOR1995GRa,jacob2009group}. One interesting feature
of group norms is that they are not polyhedral. 
\begin{example}[Group norms]
  Consider the $\ell$ subsets $g_i\subseteq\{1,\ldots,n\}$ such
  $\cup_{i=1}^\ell g_i=\{1,\ldots,n\}$. Define the \emph{group norm}
  with respect to the groups $\Gscr=\{g_1,\ldots,g_\ell\}$ as the
  solution of the convex optimization problem
  \begin{equation} \label{eq:8}
    \norm{x}_\Gscr=\min_{y_i}
    \left\{
      \sum_{i=1}^\ell\norm{y_i}_2 \Bigm| x = \sum_{i=1}^\ell P_{g_i}y_i
    \right\},
  \end{equation}
  where the linear operator $P_\Iscr:\Re^{|\Iscr|}\to\Re^n$ scatters
  the elements of a vector into an $n$ vector at positions indexed by
  $\Iscr$, i.e., $\{(P_\Iscr y)_i\}_{i\in\Iscr}=y$, and
  $(P_\Iscr y)_k=0$ for any $k\notin\Iscr$. This norm is induced by
  the atomic set
  \[
    \Ascr = \left\{
      P_{g_i}s_i \Bigm| s_i\in\Real^{|g_i|},\ \norm{s_i}_2=1,\ i=1,\ldots,\ell
    \right\},
  \]
  which yields the decomposition
  \begin{equation}\label{eq:9}
    x = \sum_{i=1}^\ell
    c_i (P_{g_i}s_i),
  \end{equation}
  where $c_i$ and $(P_{g_i}s_i)$ are, respectively, the coefficients and atoms
  of the decomposition.
  
  If the sets in $\Gscr$ form a partition of $\{1,\ldots,n\}$ then the
  (non-overlapping) group norm is simply
  \[
    \|x\|_\Gscr = \sum_{i=1}^\ell\|x_{g_i}\|_2.
  \]
  A common example is the matrix (1,2) norm, which is the sum of the
  Euclidean norms of the columns of a matrix \cite{ding2006r}. In the
  non-overlapping group case, the support set is unique, and for all
  $i=1,\ldots,\ell$, the coefficients and atoms of the
  decomposition~\eqref{eq:9} are given by
  \[
    c_i=\norm{x_{g_i}}_2, \textt{and} (P_{g_i}s_i) \text{with} s_i=(c_i)^{-1}x_{g_i}.
  \]
  More generally, the support sets $g_i$ may overlap, and thus the
  gauge value of $x$ must be obtained as the solution of the convex
  optimization problem~\eqref{eq:8}.

  The conditions under which a vector $z$ is $\Ascr$-aligned with $x$
  is similar to the 1-norm case. We first decompose by each group
  $g_i$:
  \[
    \sup_{x\in\Ascr}\,\ip x z
    \overset{\rm(a)}=
    \max_{i=1,\ldots,\ell}\ 
      \sup \left\{\ip{s_i}{z_{g_i}} \bigm| \norm{s_i}_2\le1,\ s_i\in\Re^{|g_i|}\right\}
    \overset{\rm(b)}=
      \max_{i=1,\ldots,\ell}\ \norm{z_{g_i}}_2,
  \]
  where (a) follows from applying the supremum to each atom in $\Ascr$ and (b) follows from the definition of the 2-norm. That is to say, $x$ is $\Ascr$-aligned with $z$ if the decomposition~\eqref{eq:9} has $\suppa(x)\subset\Escr\As(z)$, where
  \[
    \suppa(x) = \left\{
      P_{g_i}x_{g_i}/\norm{x_{g_i}}_2 \bigm| \norm{x_{g_i}}_2>0
    \right\}
    \text{and}
    \Escr\As(z) = \left\{
      z_{g_i}/\norm{z_{g_i}}_2
      \bigm|
      \norm{z_{g_i}}_2 = \textstyle\max_j\,\norm{z_{g_i}}_2
    \right\}.
  \]
  
\end{example}

The next two examples are for gauges that encourage sparsity (i.e.,
low-rank) for matrices.

\begin{example}[Trace norm for semidefinite matrices] \label{example-trace-norm}

  An important gauge function is generated by the spectrahedron
  \[\Ascr = \set{uu^T | u\in\Re^n,\ \|u\|_2 = 1},\] which is a subset of the
  nuclear-norm ball that only includes symmetric rank-1 matrices. As with the
  nuclear-norm, this gauge
  encourages sparsity with respect to the set of rank-1
  matrices---i.e., low-rank---and only admits positive definite matrices.

  We first derive the support function with respect to $\Ascr$:
  \[
    \sigma\As(Z) = \sup_{X\in\convA}\,\ip{X}{Z} =
    \max\Biggl\{
      0,\ \sup_{\|u\|_2 = 1} \ip{u}{Zu}
      \Biggr\} =
    \max\{0,\,\lambda_{\max}(Z)\},
  \]
  which vanishes only if $Z$ is negative semidefinite, and otherwise
  is achieved when $u$ is a maximal eigenvector of $Z$.  Let
  $X=U\Lambda U^T$ be the eigenvalue decomposition of $X$. Using
  \cref{prop-support-properties-polarity} together with~\eqref{eq:15},
  which gives us $\Ascr\polar=\set{z|\sigma\As(z)\le1}$, the gauge
  function can be expressed as the support function over $\Ascr^\circ$:
  \begin{align*}
    \gauge\As(X)
    &= \sup\,\set{\ip X Z | \lambda_{\max}(Z) \leq 1}
  \\&= \sup\,\set{\ip{U\Lambda U^T}{Z} | \lambda_{\max}(Z) \leq 1}
  \\&= \sup\,\set{\ip{\diag(\Lambda)}{\diag(U\T Z U)} | \lambda_{\max}(Z) \leq 1},
  \\&= \trace(\Lambda) + \delta_{\succeq0}(X),
  \end{align*}
  where the last equality holds because the supremum is achieved by
  $Z = UU^T$.  The indicator on the semidefinite cone arises because
  the supremum is infinite if any component of $\Lambda$ is
  negative. In other words, indefinite matrices cannot be conically
  decomposed with respect to the atomic set $\Ascr$, which is
  indicated by the infinite value of the gauge.  Moreover, it follows
  that the nontrivial eigenvectors provide a support set for $X$,
  i.e.,
  \begin{equation*} 
    \Sscr\As(X) \supseteq \{u_1u_1^T,\ldots ,u_ru_r^T\},
  \end{equation*}
  where $r$ is the rank of $X$.

  This support is not unique, however, and in fact the set of supports
  of $X$ is very large.  To see this, consider any valid conic atomic
  decomposition
  \[
    X = c_1v_1v_1^T + \cdots + c_kv_kv_k^T = VCV^T,
  \]
  where $c_i$ and $v_i$, respectively, are the $i$th diagonal entry of
  the diagonal matrix $C$ and $i$th column of the matrix $V$. Then
  \[
    \trace(X) = \trace(VCV^T) = \trace(CV^TV)
    = \ip{\diag(C)}{\diag(V^TV)} = \sum_{i=1}^kc_i,
  \]
  where the last equality follows from the fact that each $v_i v_i^T$
  is in $\Ascr$ and thus has unit norm.  Therefore any conic atomic
  decomposition of $X$ yields the same gauge value, which is the trace
  of $X$. Specifically, the support of $X$ with respect to the
  spectrahedron $\Ascr$ can be characterized as
  \begin{equation*}
    \Sscr\As(X) =
    \set{v_1v_1^T,\ldots,v_kv_k^T | \|v_i\|_2 = 1, \;
      \range(V) = \range(X)}.
  \end{equation*}
  Because we do not impose orthonormality among the vectors $v_i$,
  this set is not unique.

  According to~\eqref{eq:13}, the essential atoms are given by the
  eigenvectors corresponding to the maximal eigenvalue of $Z$,
  including all of their convex combinations:
  \[
    \Escr\As(Z) =
    \conv\set{uu^T | u^TZu = \lambda_{\max}(Z)}.
  \]
  This set coincides with the exposed face $\Fscr\As(Z)$; cf.~\eqref{eq:11}.
\end{example}
\begin{example}[Weighted trace norm for semidefinite matrices]

  \label{example-weighted-trace-norm} We describe a generalization of
  the trace norm for positive semidefinite matrices, which was covered
  by \cref{example-trace-norm}. The weighted trace norm is given by
  the function
  \[
    \kappa(X) = \ip L X + \delta_{\succeq0}(X),
  \]
  where $L$ is positive semidefinite. Write the decomposition of $L$
  as
  \[
    L = \bmat{V & \Vbar}\bmat{\Lambda & 0\\0 & 0}\bmat{V^T\\\Vbar^T}
      = V \Lambda V^T,
  \]
  where $\Lambda$ is diagonal with strictly positive elements and $V$
  and $\Vbar$, respectively, span the range and nullspace of $L$.

  We claim that $\kappa$ is the gauge to the atomic set
  \begin{equation} \label{eq:weighted-atomic-set}
    \Ascr = \set{rr^T | r = Vp,\ p\T\Lambda p=1}
          + \set{ss^T | s = \Vbar q \mbox{ for all } q},
   \end{equation}
   which we establish by showing that $X\in\Ascr$ implies
   $\kappa(X)=1$, and vice versa.
  
   Take any element $X\in\Ascr$, and observe
   \[
     \kappa(X) = \ip L X = \ip{L}{Vpp^T V^T} = p\T V\T L V p = p\T\Lambda p = 1.
   \]
   Conversely, take any $X$ such that $\kappa(X)=1$. Then, $X$ is
   PSD. The orthogonal decomposition of $X$ onto the range and
   nullspace of $L$ is given by
   \[
     X = VV^T X VV^T + \Vbar\Vbar^T X \Vbar\Vbar^T.
   \]
   Then,
   \begin{align*}
     1=\kappa(X) = \ip L X
     = \ip L {VV^T X VV^T}
     = \ip{\Lambda}{V\T X V},
   \end{align*}
   which implies that
   $V\T X V \in \conv\set{pp^T| p\T\Lambda p = 1 }$.  Therefore, $X$
   is in the convex hull of $\Ascr$.  The second set in the
   sum~\eqref{eq:weighted-atomic-set} is in the nullspace of $L$ and
   thus can be ignored. This establishes the claim, and also provides
   an expression for the support set to $X$:
   \[
     \Sscr\As(X) = \set{(Vp_i)(Vp_i)^T | p_i\T\Lambda p_i=1,\ \range(V[p_1\cdots p_k])=\range(X)}.
   \]
   The minimal set of vectors needed to complete the support
   is equal to the rank of $X$.

   The support function with respect to $\Ascr$ can be reduced to a
   maximum generalized eigenvalue problem, as follows:
   \begin{align*}
     \sigma\As(Z)
       &= \sup\set{\ip{X}{Z}|X\in\convA}
     \\&= \sup\set{\ip{Vpp\T V\T}{Z} | p\T\Lambda p \le 1}
     \\&= \sup\set{\ip{V\T Z V}{\Lambda^{-1/2}uu\T\Lambda^{-1/2}}|u\T u \le 1}
     \\&= \sup\set{\ip{\Lambda^{-1/2}V\T Z V\Lambda^{-1/2}}{uu^T}|u\T u \le 1}
     \\&= \max\left\{ 0, \lambda\submax(\Lambda^{-1/2}V\T Z V\Lambda^{-1/2})\right\}.
   \end{align*}
   We recognize that the expression inside the supremum is the
   generalized eigenvalue of the pencil $(Z,L)$, so that
   \[
     \sigma\As(Z) = \max\left\{ 0, \lambda\submax(Z,L) \right\}.
   \]
   Hence, the essential atoms are given by the maximal generalized
   eigenvectors and their convex combinations:
   \[
     \Escr\As(Z) = \conv\set{uu^T | \ip{u}{Z u} = \lambda\submax(Z,L)\cdot \ip{u}{Lu}}.
   \]
 \end{example}

\section{Alignment as optimality} \label{sec-manifestations}

A pair of vectors $(x,z)$ that is aligned with respect to an atomic set inform
each other about their respective supports. If the two vectors are related
through a gradient map of a convex function, then the alignment condition can be
interpreted as an optimality condition for a constrained or regularized
optimization problem. The alignment condition can also be interpreted as
providing an optimality certificate for the problem of finding minimum gauge
elements of a convex set. This section describes both perspectives.

\subsection{Regularized smooth problems}

Consider the three related convex optimization problems
\begin{subequations} \label{eq:smoothopto}
\begin{alignat}{4}
    \label{eq:smoothopto-unconstrained}
  &\minimize{x} &\quad& f(x) \mathrlap{{} + \rho\gamma\Cs(x),}
  \\\label{eq:smoothopto-set-constrained}
  &\minimize{x} &\quad& f(x) &\ &\st\ & \gamma\Cs(x)&\leq \alpha,
  \\\label{eq:smoothopto-level-constrained}
  &\minimize{x} &\quad& \gamma\Cs(x) && \st\ & f(x) &\leq \tau,
\end{alignat}
\end{subequations}
where $\rho$, $\alpha$, and $\tau$ are positive parameters. Note that the
constraint $\gauge\Cs(x)\le\alpha$ is equivalent to the constraint
that $x$ is in the set $\alpha\Cscr$. 
\Cref{blanket-assumption} on $\Cscr$ continues to hold throughout.
\begin{theorem}[Optimality] \label{prop-optimality-smooth} Let $f:\Re^n\to\Re$
  be a differentiable convex function and $\Cscr\subset\Re^n$. Assume
  that~\eqref{eq:smoothopto-level-constrained} is strictly feasible. For each of
  the problems in~\eqref{eq:smoothopto}, a feasible point $x^*$ is optimal if
  and only it is $\Cscr$-aligned with $z^*:=-\nabla f(x^*)$.
  \end{theorem}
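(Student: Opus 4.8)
The plan is to treat each of the three problems in~\eqref{eq:smoothopto} through its own first-order optimality condition and then collapse that condition onto the single alignment identity $\ip{x^*}{z^*}=\gauge\Cs(x^*)\cdot\sigma\Cs(z^*)$ of \cref{cor-general-pi}. The organizing idea is that the polar inequality~\eqref{eq:polar-inequality} behaves like weak duality: it holds at \emph{every} admissible pair $(x,z)$, so optimality ought to be exactly the assertion that it is tight, which by \cref{def:alignment} is $\Cscr$-alignment. The workhorse for the translation is the subdifferential of a gauge: combining polarity $\gauge\Cs=\sigma\Csp$ (\cref{prop-support-properties-polarity}) with the support-function subdifferential \cref{prop-support-properties-subdiff} applied to $\Cscr\polar$ gives $\partial\gauge\Cs(x)=\set{w\in\Cscr\polar | \ip x w=\gauge\Cs(x)}$, a description in which both a normalization ($w\in\Cscr\polar$) and the tightness $\ip x w=\gauge\Cs(x)$ are already built in.

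For necessity (optimal $\Rightarrow$ aligned) I would write stationarity in subdifferential form. Since $f$ is smooth and convex and $\gauge\Cs$ is a convex gauge, the sum rule gives for~\eqref{eq:smoothopto-unconstrained} that $0\in\nabla f(x^*)+\rho\,\partial\gauge\Cs(x^*)$, i.e.\ $z^*\in\rho\,\partial\gauge\Cs(x^*)$; for~\eqref{eq:smoothopto-set-constrained} that $\ip{z^*}{x-x^*}\le0$ for all $x\in\alpha\Cscr$, equivalently $\ip{z^*}{x^*}=\sigma_{\alpha\Cscr}(z^*)=\alpha\,\sigma\Cs(z^*)$ by the scaling rule \cref{prop-scaling}; and for~\eqref{eq:smoothopto-level-constrained} that the Slater point supplied by strict feasibility yields a multiplier $\mu\ge0$ with $\mu z^*\in\partial\gauge\Cs(x^*)$ and $\mu(f(x^*)-\tau)=0$. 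Feeding each condition into the gauge-subdifferential description above, and using positive homogeneity, reduces every case to $\ip{x^*}{z^*}=\gauge\Cs(x^*)\,\sigma\Cs(z^*)$, hence to alignment.

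For sufficiency (aligned $\Rightarrow$ optimal) I would argue directly by convexity. For any feasible $x$ the gradient inequality gives $f(x)\ge f(x^*)-\ip{z^*}{x-x^*}$, and substituting the aligned value $\ip{z^*}{x^*}=\gauge\Cs(x^*)\sigma\Cs(z^*)$ together with the polar inequality $\ip{z^*}{x}\le\gauge\Cs(x)\sigma\Cs(z^*)$ sandwiches the objective; invoking feasibility ($\gauge\Cs(x)\le\alpha$, respectively $f(x)\le\tau$) then forces $x^*$ to attain the minimum. The degenerate branch $\gauge\Cs(x^*)=0$ should be peeled off first: by \cref{prop-support-properties-recession} it means $x^*\in\rec\Cscr$, and then \cref{cor-general-alignment} already equates alignment with the orthogonality $\ip{x^*}{z^*}=0$, matching stationarity in that regime.

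The step I expect to be genuinely delicate is reconciling the \emph{scale}: alignment is invariant under rescaling $z^*$, whereas each first-order condition pins a magnitude, so the crux is showing these two coincide. Concretely, the sandwich above leaves a residual of the form $(\rho-\sigma\Cs(z^*))(\gauge\Cs(x)-\gauge\Cs(x^*))$ for~\eqref{eq:smoothopto-unconstrained}, so one must extract from stationarity that $\sigma\Cs(z^*)=\rho$; for~\eqref{eq:smoothopto-set-constrained} that the constraint binds, $\gauge\Cs(x^*)=\alpha$ (or else $z^*=0$); and for~\eqref{eq:smoothopto-level-constrained} that $\sigma\Cs(z^*)=1/\mu$. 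The level-constrained problem is the hardest, being the only one requiring a constraint qualification—precisely why strict feasibility is assumed—both to produce $\mu$ and to exclude the degenerate $\mu=0$ branch. Carrying these magnitude identities and the recession/zero-gauge cases across all three parameter regimes, while routing every conclusion through the single identity \cref{cor-general-pi}, is where the care concentrates.
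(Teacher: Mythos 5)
Your ``only if'' half (optimality implies alignment) is sound, and it is essentially the paper's own argument carried out more explicitly: first-order conditions translated through the gauge subdifferential $\partial\gauge\Cs(x^*)=\{w\in\Cscr\polar \mid \ip{x^*}{w}=\gauge\Cs(x^*)\}$, then the polar inequality to force equality. The genuine gap is in your ``if'' half, and it sits exactly at the step you flagged as the crux. The scale identities you need ($\sigma\Cs(z^*)=\rho$ for \eqref{eq:smoothopto-unconstrained}; $\gauge\Cs(x^*)=\alpha$ or $\sigma\Cs(z^*)=0$ for \eqref{eq:smoothopto-set-constrained}; $\sigma\Cs(z^*)=1/\mu$ for \eqref{eq:smoothopto-level-constrained}) cannot be ``extracted from stationarity,'' because in this direction stationarity is the conclusion, not a hypothesis: all you are given is alignment, and alignment is positively homogeneous in each of $x^*$ and $z^*$ separately, so it carries no magnitude information whatsoever. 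This is not a technicality that more care can repair; the implication is false. Take $\Cscr=[-1,1]\subset\Re$, so that $\gauge\Cs=\sigma\Cs=|\cdot|$, and $f(x)=\half(x-1)^2$. With $\rho=1/10$, problem \eqref{eq:smoothopto-unconstrained} has unique solution $x^*=9/10$; yet at $\bar x=1$ the pair $(\bar x,-\nabla f(\bar x))=(1,0)$ is $\Cscr$-aligned, since $\ip{1}{0}=0=\gauge\Cs(1)\cdot\sigma\Cs(0)$, and $\bar x$ is not optimal. Likewise $(\bar x,z)=(1/2,1/2)$ is aligned ($1/4=(1/2)\cdot(1/2)$) and non-optimal, and that same point defeats \eqref{eq:smoothopto-set-constrained} with $\alpha=2$ and \eqref{eq:smoothopto-level-constrained} with $\tau=1/2$ (which is strictly feasible). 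So the residual $(\rho-\sigma\Cs(z^*))(\gauge\Cs(x)-\gauge\Cs(x^*))$ in your sandwich really can be negative, and no argument will remove it.

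What you should take away is that this is a flaw in the theorem itself, not merely in your attempt---and the paper's own proof conceals the identical hole. The paper correctly derives the scale-pinned optimality conditions ($\rho^{-1}z^*\in\FaceCp(x^*)$; $x^*\in\alpha\,\partial\sigma\Cs(z^*)$; $z^*\in\cone\partial\gauge\Cs(x^*)$) and then asserts that each is ``equivalent'' to alignment by \cref{cor-general-alignment}. But that corollary characterizes alignment only through the scale-free conditions $z^*\in\cone\FaceCp(x^*)+\rec\Cscr\polar$ and $x^*\in\cone\Fscr\Cs(z^*)+\rec\Cscr$: the cone absorbs precisely the factors $\rho^{-1}$ and $\alpha$ that optimality pins down, so the corollary delivers only the implication from optimality to alignment, never its converse. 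Consequently \cref{prop-optimality-smooth} holds as stated only in the ``only if'' direction; the ``if'' direction becomes true only after adding your normalization conditions to the hypotheses (for instance, for \eqref{eq:smoothopto-set-constrained}: $x^*$ feasible, aligned, and either $\gauge\Cs(x^*)=\alpha$ or $\nabla f(x^*)=0$). Your instinct to isolate the scale reconciliation as the hard step was exactly right; the honest conclusion is that it cannot be carried out, which is a finding about the paper rather than a deficiency of your plan.
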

\begin{proof}
  First consider the unconstrained
  problem~\eqref{eq:smoothopto-unconstrained}. A vector $x^*$ is a
  solution if and only if
  \[0\in\nabla f(x^*) + \rho\partial\gauge\Cs(x^*).\] Equivalently,
  \[
    \rho^{-1}z^* \in \partial\gauge\Cs(x^*)=\partial\sigma\Csp(\xstar)
    \equiv\Face\Csp(\xstar).
  \]
  Then by \cref{cor-general-alignment}, this condition is equivalent to the
  $\Cscr$-alignment of the pair $(x^*,z^*)$.

  Next, consider the gauge constrained
  problem~\eqref{eq:smoothopto-set-constrained}. Because
  $\gamma\Cs(x)\leq \alpha$ is equivalent to $\alpha^{-1}x\in\Cscr$,
  a feasible vector $x^*$ is optimal if and only if
  \[0\in\nabla f(x^*) + \partial\delta\Cs(\alpha^{-1}x^*) \text{i.e.,} -\nabla
    f(x^*) \in \partial\delta\Cs(\alpha^{-1}x^*),\] where $\delta\Cs$
  is the indicator function for set $\Cscr$. By
  \cite[Theorem~23.5]{rockafellar1970convex} it follows that
  $x^* \in \alpha\partial\sigma\Cs(z^*)$, and thus by
  \cref{cor-general-alignment}, the pair $(x^*,z^*)$ is
  $\Cscr$-aligned.

  Finally, consider the level constrained
  problem~\eqref{eq:smoothopto-level-constrained}. Let
  $\Pscr = \set{x | f(x) \leq \tau}$. The hypothesis on $f$ ensures that $\Pscr$ has a non-empty relative interior. Then a feasible vector $x^*$ is
  optimal if and only if
  \[
    0 \in \partial(\gamma\Cs(x^*) + \delta_\Pscr(x^*))
    = \partial\gamma\Cs(x^*) + \partial\delta_\Pscr(x^*),
  \]
  where the equality follows from
  Rockafellar~\cite[Theorem~23.8]{rockafellar1970convex}.

  Now we consider two cases. If $f(x^*) < \tau$, then it follows that $0 \in
  \partial\gamma\Cs(x^*)$, and thus $x^* \in \rec\Cscr$. Then by
  \cref{cor-general-alignment}, the pair $(x^*,z^*)$ is $\Cscr$-aligned. If
  $f(x^*) = \tau$, then by \cite[Theorem~1.3.5]{hiriart-urruty01}, there exists
  a positive scalar $\lambda$ such that
  \[0 \in \partial\gamma\Cs(x^*) + \lambda\nabla f(x^*) \text{i.e.,} z^* \in
    \cone \partial\gamma\Cs(x^*).\] Then by \cref{cor-general-alignment}, the
  pair $(x^*,z^*)$ is $\Cscr$-aligned.
\end{proof}

\subsubsection{Objective value bound}
With only slightly more effort, \Cref{prop-optimality-smooth} implies that the residual
\[
  g\Cs(x) = \alpha^*\sigma\Cs(z_x) - \ip{x}{z_x}, 
  \text{with} z_x := -\nabla f(x),
\]
of the polar inequality, where $\alpha^*$ is an upper bound on the gauge value
$\gamma\Cs(x^*)$ of any optimal solution, bounds the difference between the
objective value of $f(x)$ and the optimal value $f(x^*)$. Generally a bound
$\alpha^*$ is not available. The notable exception, however, is for problems of
the form~\eqref{eq:smoothopto-set-constrained}, where feasibility implies that
$\gauge\Cs(x^*)\le\alpha$, and in that case we may simply take
$\alpha^*=\alpha$. To see how $g_c$ provides the bound on the optimal value of
$f$, note that
\begin{align*}
  f(\xstar) &\ge f(x) + \ip{\xstar-x}{\nabla f(x)}
          \\&\ge f(x) + \min_{a\in\alpha^*\Cscr}\, \ip{a-x}{\nabla f(x)} 
          \\&= f(x) + \ip{x}{z} - \alpha^*\sigma\Cs(z),
\end{align*}
where the first inequality follows from the subgradient inequality.
Rearranging terms and using the definition of $g_c$, we obtain the bound
\[
 g_c(x) \ge f(x)-f(\xstar) \quad\forall x. 
\]
A similar bound is derived by Jaggi~\cite{jaggi2013revisiting} in the context of the
conditional gradient method applied to~\eqref{eq:smoothopto-set-constrained} and
by Ndiaye et al.~\cite{ndiaye2016gap}. 

\subsubsection{Conditional gradient and atomic alignment}

Conditional gradient (CG) methods
\cite{jaggi2013revisiting,frank1956algorithm,dunn1978conditional} naturally
exhibit the atomic alignment property in several ways. Here we describe one
property related to alignment that can be used to develop computationally
efficient variations for this class of methods.

In its simplest form, the CG method applies to problems such
as~\eqref{eq:smoothopto-set-constrained}. Because here we wish to make the
atomic set explicit, we express that problem as
\begin{equation} \label{eq:CG-generic-problem}
   \minimize{x\in\Ascr}\enspace f(x).
\end{equation}
We adopt the simplifying assumption that $\Ascr$ is compact so that every
direction exposes a face. The iterates of the basic CG method are summarized in \cref{algo:conditional-gradient-vanilla}. 
\begin{algorithm}[t]
  \DontPrintSemicolon
  \KwIn{$x^{(0)}\in\Ascr$,\ $\epsilon>0$} 
  \For{$k=0,1,2,\ldots$}{
    $z\k=-\nabla f(x\k)$\;
    $a\k\in\tau\FaceA(z\k)$\label{algo:cg-expose}\;%
    \lIf{$\ip{a\k-x\k}{z\k}<\epsilon$}{stop}
    $x\kp1=\theta\k a\k + (1-\theta\k)x\k$, \quad $\theta\k\in(0,1)$\label{algo:cg-merge}\;
  }
  \Return{$x\k$}
  \caption{Conditional gradient method for~\eqref{eq:CG-generic-problem}\label{algo:conditional-gradient-vanilla}}
\end{algorithm}
The linear minimization oracle (LMO) in Step~\ref{algo:cg-expose} selects an atom or
a convex combination of atoms from the set $\Ascr$ exposed by the current
negative gradient $z\k\equiv-\nabla f(x\k)$. In the language of atomic alignment, the LMO step
selects an atom $a\k$ that is aligned with $z\k$. In particular, observe that
\[
  \ip{a\k}{z\k} = \sigma\Cs(z\k) = \gauge\Cs(a\k)\cdot\sigma\Cs(z\k),
\]
where the second equality follows because $a\k\in\Cscr$, and so
$\gauge\Cs(a\k)\le1$.
 
Step~\ref{algo:cg-merge} merges the selected element
$a\k$ with the collection of atoms that have been exposed through iteration $k$,
and which are represented as an aggregate in the iterate $x\k$. Various choices
for the steplength $\theta\k$ exist, including linesearch, which requires
additional evaluations of the function $f$ to ensure sufficient decrease, and a
decaying steplength that follows a predetermined schedule.

The recent appeal of these methods lies with the computational efficiency of the
linear minimization oracle for many important special cases, especially cases
where projections or proximal operations are not computationally feasible. The
unit nuclear-norm ball described in \cref{example-nuclear-norm}
illustrates the point: projection of an $n$-by-$m$ matrix $X$ onto the set
$\Ascr_*:=\set{Z|\|Z\|_*\le1}$ is the matrix
\begin{equation*}
  \proj_{\Ascr_*}(X) = U\bar\Sigma V^T \text{with}
  \bar\Sigma = \Diag(\min\{1,\sigma_i(X)\}^{\min\{m,n\}}_{i=1}).
\end{equation*}
where $X=U\Diag(\{\sigma_i(X)\}_{i=1}^{\min\{m,n\}})V^T$ is the singular-value decomposition of $X$.
Thus, the
projection operation requires computing all singular triples of $X$ larger than 1.
In contrast, the linear minimization oracle in Step~\ref{algo:cg-expose}
requires only computing one of the maximal singular triples of the negative
gradient (a matrix, in this case). For this reason, the CG method often features
in applications of matrix completion \cite{jaggi2010simple,shalev2011large,lee2009efficient}.

We express the merge step at iteration $k$ recursively as
\begin{equation}
  \label{eq:25}  x\k = \sum_{i=1}^k\widehat\theta^{(i)}a^{(i)},
  \quad
  \widehat\theta^{(i)}:=\theta^{(i)}\prod_{j=1}^i(1-\theta^{(j)}).
\end{equation}
This expression makes explicit the one-atom-at-a-time construction of the
current iterate $x\k$, each taken from a face exposed by the negative gradients.
Thus,
\[
  x\k \in \sum_{i=1}^k\widehat\theta^{(i)}\FaceA(z^{(i)}).
\]
In an idealized, perfectly greedy run of the algorithm, the sequence of exposed
faces $\FaceA(z\k)$ are expanding, i.e., $\FaceA(z\k)\subseteq\FaceA(z\kp1)$,
and converge to an optimal face $\FaceA(z^*)$, where $z^*:=-\nabla f(x^*)$. But
in general, we do not expect such efficiency, and may inadvertently collect many
sets of atoms that are not at all related to the optimal face, so that some
atoms $a\k\notin\suppa(x^*)$. Thus, the computed decomposition~\eqref{eq:25} at any
iteration $k$ may contain atoms not in the optimal support $\suppa(x^*)$. In
applications such as matrix-completion, described in
\cref{example:dual-conditional-gradient} below, the cost of storing
intermediate atoms $a\k$---say, as singular pairs $(u\k,v\k)$---can be
prohibitively expensive for large problems. Various modifications of the basic
CG method aim to compress or trim the collected atoms to alleviate unnecessary storage~\cite{rao2015forward}.

In the case of a least-squares objective function, the alignment principle
provides a simple device that short-circuits the need for storing
intermediate atoms, as illustrated in the following example.

\begin{example}[Delayed atom generation]\label{example:dual-conditional-gradient}
  Consider the low-rank matrix completion problem
  \begin{equation*}
    \minimize{X\in\Real^{m\times n}}\enspace\half\|\Omega\circ(X - B) \|_F^2
    \enspace\st\enspace\|X\|_* \le\tau.
  \end{equation*}
  This problem appears in recommender systems~\cite{bell2007lessons}, where the
  $(i,j)$th element of the sparse matrix $B$ records the ratings score given by user
  $i$ for product $j$. Ratings are observed only for a subset of user-product
  pairs indexed by the binary mask
  \[
    \Omega_{ij} =
    \begin{cases}
      1 & \mbox{if user $i$ has rated product $j$;}
    \\0 & \mbox{otherwise.}
    \end{cases}
  \]
  The goal is to predict the unseen ratings, captured in the dense unknown
  matrix $X$. A structural low-rank assumption is used to capture an
  ``archetype'' phenomenon---users who often like the same movies serve as good
  predictors for one another, and movies that are liked by the same users probably
  are also similar. Therefore, we can consider each user as a sparse linear
  combination of archetypal individuals (and similarly with products), where the
  inner product of their feature vectors give the same prediction rating. The
  nuclear-norm constraint on $X$ is a common approach for encouraging low-rank
  solutions~\cite{recht2010guaranteed}.

  Most of the computational cost of \Cref{algo:conditional-gradient-vanilla}
  applied to this problem is represented in Step~\ref{algo:cg-expose}, which requires
  calculating a maximal singular pair of the current negative gradient
  $Z\k:=-\nabla f(X\k) = \Omega\circ(B-X\k)$. This is a sparse matrix indexed
  by $\Omega$.
  (\Cref{algo:conditional-gradient-vanilla} is written with lower-case symbols
  to denote vectors, but we use upper-case symbols here to denote the matrix
  iterates for this problem.) Thus, the atoms $a\k$ are outer products of the
  unit-norm vector pairs $(u\k,v\k)$ that satisfy $\ip{u}{Z\k v}=\tau\sigma\submax(Z\k)$.
  The key limitation of this approach is that either the atoms are aggregated
  into a dense iteration matrix $X\k$, or are stored as a sequence of pairs
  $\{(u^{(i)},v^{(i)})\}_{i=1}^k$. In either case, the memory requirements are
  prohibitive for anything but small problems.

  \begin{algorithm}[t]
    \DontPrintSemicolon
    \SetKwComment{tcp}{[}{]}
    \KwIn{$\Omega$, $B$, $\ell$}
    $R\k=\Omega\circ B$;\enspace $Q\k=0$\;
    \For{$k=1,2,\ldots$}{
      $Z\k = \Omega\circ R\k$\;
      $(u,v) = \mbox{\tt svds($Z\k$, 1)}$\tcp*{expose atom $A\k\equiv \tau uv^T$}
      $\Delta R\k = \Omega\circ(\tau uv^T)-Q\k$ \tcp*{$\Omega\circ(\tau uv^T)=\tau(u_i v_i)_{(ij)\in\Omega}$}
      \label{cg-dual-optimality} \lIf{$\ip{\Delta R\k}{R\k}<\epsilon$}{stop}
      $\theta\k = \min\{1,\, \ip{\Delta R\k}{R\k}/\|\Delta R\k\|^2\}$ \tcp*{exact linesearch on quadratic}
      $R\kp1 = R\k - \theta\k\Delta R\k$\;
      $Q\kp1 = Q\k + \theta\k\Delta R\k$\;
    }
    $(U_\ell, V_\ell, \Sigma_\ell) = \mbox{\tt svds($Z\k$, $\ell$)}$\tcp*{top
      $\ell$ singular vectors} \label{cg-dual-expose} 
    $S_\ell\in \argmin\left\{\half\|\Omega\circ(U_\ell SV_\ell^T)-b)\|_2^2
      \mid \trace(S)\le\tau,\ S\succeq0\right\}$\label{cg-dual-recover}\;
    \Return{$(U_\ell,S_\ell,V_\ell)$}
    \caption{Dual conditional gradient for~\eqref{eq:CG-generic-problem}
      with least-squares objective. The linear operator $\Omega$\label{algo:dual-conditional-gradient}.}
  \end{algorithm}
  \Cref{algo:dual-conditional-gradient} describes a ``dual'' version of
  the CG method shown in Algorithm~\ref{algo:conditional-gradient-vanilla},
  similar to the approach used by Yursever et al.~\cite{yurtsever2017sketchy},
  who maintain a low-memory random sketch of the primal iterate. Instead, here
  we completely forgo any reference of the primal iterate during the CG
  iterations, and only update a sequence of dual vectors $Z\k\to Z^*\equiv
  -\nabla f(X^*)$. The corresponding primal solution $X^*$ is subsequently
  recovered using the alignment between $X^*$ and $Z^*$, as spelled out
  by \cref{prop-optimality-smooth}. This technique was used by Friedlander and
  Mac\^edo~\cite{friedlander2016low} for low-rank SDPs.

  The sequence of iterates $Z\k$, $R\k$, and $Q\k$ respectively, coincide with
  the negative gradient $-\nabla f(X\k)$, residual $R\k=\Omega\circ(X\k-B)$, and
  restricted primal iterate $Q\k=\Omega\circ X\k$, where the feasible primal
  iterate $X\k$ is held implicitly. At the termination of the loop,
  Steps~\ref{cg-dual-expose} and~\ref{cg-dual-recover} use the fact that the
  latest iterate $Z\k$ exposes the range and column spaces of the solution.
  Thus, it can be used recover a rank-$\ell$ matrix that best approximates (in
  the sense of the Frobenious norm) the corresponding primal iterate $X\k$; see
  \Cref{example-nuclear-norm}. Step~\ref{cg-dual-optimality} is equivalent to
  the optimality test in \Cref{algo:conditional-gradient-vanilla}
  because \[\ip{\Delta R\k}{R\k} = \ip{\Omega\circ(\tau uv^T +
      X\k)}{\,R\k}=\ip{A\k-X\k}{Z\k},\] where $A\k:=\tau uv^T$. The linesearch
  parameter $\theta\k$ is an exact minimizer of $\|R\k-\theta\Delta R\k\|_F$
  over $\theta$.

  \Cref{table-cg-on-matrix-completion} lists the results of applying the primal
  and dual CG variants on a random matrix-completion problem. For varying
  problem sizes with $m=n$, we generate the binary mask $\Omega$ with 10\%
  nonzeros, and the observation matrix is generated
  \[
    B = \Omega\circ(UV^T+0.1\cdot N), 
  \]
  where $U\in\Real^{m\times r}$, $V\in\Real^{n\times r}$, and $N$ are generated
  i.i.d.~from a normal Gaussian distribution. The ``true rank'' $r=\mathop{\bf
    round}(m/100)$. Interestingly, the multiplicity of the computed dual
  solution $Z^*$ was always $1$, which made the primal-recovery phase
  (Step~\ref{cg-dual-recover} of~\Cref{example:dual-conditional-gradient})
  trivial. The residual values between the two variants are the same, confirming
  that they recover solutions of similar quality, but the dual variant is
  significantly faster because it does not need to manipulate storage for the dual 
  iterate $X\k$.
  
  \begin{table}[t] 
  \centering \pgfplotstabletypeset[%
  columns={size,objpr,rankpr,timepr,objdu,rankdu,timedu},
  columns/size/.style={
    column name={$m=n$},column type={r} },
  columns/objpr/.style={
    column
    name={residual},precision=1,column type=r }, columns/objdu/.style={ column
    name={residual},precision=1,column type=r }, columns/rankpr/.style={ column
    name={rank},postproc cell content/.append style={ /pgfplots/table/@cell
      content/.add={}{}} }, columns/rankdu/.style={ column name={rank},postproc
    cell content/.append style={ /pgfplots/table/@cell content/.add={}{}} },
  columns/timepr/.style={ column name={time},fixed,fixed
    zerofill,precision=1,column type=r }, columns/timedu/.style={ column
    name={time},fixed,fixed zerofill,precision=1,column type=r }, every head
  row/.style={ before row={ \toprule
          \multicolumn{1}{c}{size}
         &\multicolumn{3}{c}{primal CG}
         &\multicolumn{3}{c}{dual CG}
        \\\cmidrule(lr){1-1}\cmidrule(lr){2-4}\cmidrule(lr){5-7}
      },
      after row=\midrule
    },
    every last row/.style={after row=\bottomrule},
    ]{
			size  	 objpr  rankpr  objdu rankdu  timepr   timedu
			100		   10.31  6			10.28 1     0.039    0.039
			250		   25.17  6 		25.16 1     0.074    0.075
			1000	  100.39  6		 100.39 1     1.31     0.32
			5000    501.69  6    501.69 1    48.34    11.38
			10000	  998.86  6  	 998.86 1   242.9     63.32
    }
    \caption{Performance of the primal and dual variants of conditional gradient
      for the matrix-completion problem
      (\Cref{example:dual-conditional-gradient})
      after 10 iterations of
      Algorithm~\ref{algo:conditional-gradient-vanilla} (primal CG) and
      Algorithm~\ref{algo:cg-expose} (dual CG). Estimated rank of final solution
      is computed as the smallest number of singular values that account for 90\% of its
      Frobenious norm. Time is measured in seconds.\label{table-cg-on-matrix-completion}}
  \end{table}
\end{example}

\subsection{Alignment in gauge optimization}\label{sec-gauge-optimization}

The alignment property characterizes the optimality for the polar dual
pair
\begin{equation}   \label{eq:gaugeduality}
  \begin{array}{l@{\enspace}l}
    \minimize{x} & \gauge\Cs(x)\\ 
    \st & x\in \Dscr,
  \end{array}
  \qquad
  \begin{array}{l@{\enspace}l}
    \minimize{z} & \sigma\Cs(z)\\
    \st & z\in \Dscr',
  \end{array}
\end{equation}
where $\Dscr\subset\Re^n\backslash\{0\}$ is any closed convex set and
$\Dscr':=\set{z|\ip x z \ge 1\ \forall x\in\Dscr}$ is its antipolar. This class
of problems and its applications is described in detail by Freund
\cite{freund1987dual} and by Friedlander, Mac\^edo, and Pong
\cite{friedlander2014gauge}.

\begin{proposition}[Polar duality] A pair of primal-dual feasible vectors
  $(x,z)\in\Dscr\times\Dscr'$ is primal-dual optimal
  for~\eqref{eq:gaugeduality} if and only if they are
  $\Cscr$-aligned and $\ip x z = 1$.
\end{proposition}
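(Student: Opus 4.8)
The plan is to hang the entire argument on the two-sided inequality chain that feasibility and the polar inequality together produce. For any feasible pair $(x,z)\in\Dscr\times\Dscr'$, membership of $z$ in the antipolar $\Dscr'$ gives $\ip{x}{z}\ge1$, while the polar inequality~\eqref{eq:polar-inequality} gives $\ip{x}{z}\le\gauge\Cs(x)\cdot\sigma\Cs(z)$. Thus $1\le\ip{x}{z}\le\gauge\Cs(x)\cdot\sigma\Cs(z)$ for every feasible pair, which is the weak-duality statement that the product of the two objective values is at least one. Both directions of the equivalence then amount to pinning down exactly when these two inequalities are simultaneously tight.

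For sufficiency I would assume that $(x,z)$ is $\Cscr$-aligned and $\ip{x}{z}=1$. By~\cref{def:alignment} alignment means the polar inequality holds as an equation, so together with $\ip{x}{z}=1$ this yields $\gauge\Cs(x)\cdot\sigma\Cs(z)=1$; in particular both factors are finite and strictly positive. Now fix this dual point $z$ and take any competing primal-feasible $x'\in\Dscr$: the chain applied to $(x',z)$ gives $\gauge\Cs(x')\cdot\sigma\Cs(z)\ge\ip{x'}{z}\ge1=\gauge\Cs(x)\cdot\sigma\Cs(z)$, and dividing by $\sigma\Cs(z)>0$ leaves $\gauge\Cs(x')\ge\gauge\Cs(x)$. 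Hence $x$ solves the primal; the symmetric argument with $x$ held fixed shows $z$ solves the dual. Notably this direction needs no constraint qualification.

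For necessity I would start from a primal-dual optimal pair, so that $\gauge\Cs(x)$ and $\sigma\Cs(z)$ equal the respective optimal values. The crucial extra ingredient is strong duality for gauge programs---the fact that the product of the optimal primal and dual values equals one---which I would invoke from the gauge-duality theory of Freund~\cite{freund1987dual} and Friedlander, Mac\^edo, and Pong~\cite{friedlander2014gauge} (or, alternatively, verify its hypotheses in the present closed-convex, origin-containing setting). Granting $\gauge\Cs(x)\cdot\sigma\Cs(z)=1$, the weak-duality chain collapses to $1\le\ip{x}{z}\le1$, which simultaneously forces $\ip{x}{z}=1$ and equality in the polar inequality; by~\cref{def:alignment} the latter is exactly $\Cscr$-alignment, as also read through~\cref{cor-general-alignment}.

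The main obstacle is precisely this appeal to strong duality in the necessity direction, and it is genuinely needed: if a gauge duality gap is present, so that the optimal product exceeds one, then any optimal pair satisfies $\gauge\Cs(x)\cdot\sigma\Cs(z)>1$, which is incompatible with having both $\ip{x}{z}=1$ and alignment. Thus the only delicate point is ensuring (or citing) that the polar pair~\eqref{eq:gaugeduality} admits no duality gap; once that is secured, the remaining work is the elementary tightening of the inequality chain established in the first paragraph.
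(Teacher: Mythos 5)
Your proposal is correct and follows essentially the same route as the paper: sufficiency via the antipolar inequality $\ip{x'}{z}\ge 1$ combined with the polar inequality (the paper phrases this as a contradiction and only treats the primal side explicitly, while you argue directly and also cover dual optimality of $z$ by symmetry), and necessity by citing strong gauge duality from Freund and Friedlander--Mac\^edo--Pong to force $1=\ip{x}{z}=\gauge\Cs(x)\cdot\sigma\Cs(z)$. Your closing remark that the appeal to strong duality is genuinely unavoidable in the necessity direction correctly identifies the one nontrivial external ingredient that both your argument and the paper's rely on.
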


\begin{proof}
  First, assume that the pair $(x,z)$ is primal-dual optimal
  for~\eqref{eq:gaugeduality}, then by the strong
  duality~\cite[Corollary~5.2]{friedlander2014gauge}, we have
  \begin{equation} \label{eq:6}
    1 = \ip x z = \gauge\Cs(x)\cdot\sigma\Cs(z).
  \end{equation}
  We prove the other direction by contradiction. Assume $(x,z)$ are
  $\Cscr$-aligned and $\ip x z = 1$ and suppose there exists
  $\xhat \in C$ such that $\gauge\Cs(\xhat) < \gauge\Cs(x)$. Then it
  follows that
  \[\ip{\xhat}{z} \overset{(a)}{\geq} 1 \overset{(b)}{\vphantom{\ge}=} \gauge\Cs(x)\cdot\sigma\Cs(z) > \gauge\Cs(\xhat )\cdot\sigma\Cs(z),\]
  where the inequality (a) follows from the definition of the
  antipolar $\Dscr'$, and the equality (b) follows
  from~\eqref{eq:6}. This violates the polar gauge inequality, and
  thus leads to a contradiction.
\end{proof}

\section{Alignment in convolution of atomic sets}

The notions of atomic decomposition and alignment that we have
discussed thus far are all tied to a single atomic set
$\Ascr$. Correspondingly, the regularized optimization problems
considered in~\cref{sec-manifestations} involve only a single
regularization function $\gauge\As$ meant to encourage minimizers 
sparse with respect to $\Ascr$.  Richer atomic decompositions and
regularized formulations, however, may be obtained by combining
different atomic sets. We describe in this section approaches that
combine multiple atomic sets $\Ascr_1$ and $\Ascr_2$. Informally, we
seek to decompose a vector $x$ as the additive decomposition of the
form
\begin{equation} \label{eq:24}
  x = x_1 + x_2 \quad \mbox{where each $x_i$ is $\Ascr_i$ sparse}.
\end{equation}
This operation appears often in models for separating signals, also
known as
demixing~\cite{dtds06,wright2009robust,candes2011robust,wright2013compressive,McCoy2014,oymak2017universality}.

A common approach is to directly construct an aggregate atomic set
as the union of simpler sets $\Ascr_1$ and $\Ascr_2$, each with a
special structure that reflects a useful decomposition. The union of
atomic sets, in fact, corresponds to the infimal sum convolution of
their corresponding gauge functions, as we show below.

Our main focus, however, is an alternative and less-often used
approach that forms the aggregate atomic set as the Minkowski sum
\begin{equation*} \label{eq:23}
  \Ascr_1+\Ascr_2 = \set{a_1+a_2|a_1\in\Ascr_1,\ a_2\in\Ascr_2}
\end{equation*}
of the simpler atomic sets, which directly mirrors the desired
decomposition in~\eqref{eq:24}. As with the union operation, the sum
of atomic sets also corresponds to a convolution operation of the
corresponding gauge functions, except that in this case it is
\emph{polar convolution} \cite{FriedlanderMacedoPong:2019}, rather
than sum convolution.

The sum of atomic sets and the connection to polar convolution
allows us to deduce properties of alignment for the
constituent sets, and thus to suggest dual approaches similar to
\cref{algo:dual-conditional-gradient} for solving the
optimization formulations that arise in demixing applications.

Below we only consider two distinct atomic sets $\Ascr_1$ and
$\Ascr_2$. The convolution of three or more sets is an obvious
extension. Recall our notational convention that for any set $\Dscr$,
$\gauge_{\scriptscriptstyle\Dscr} \equiv
\gauge_{\scriptscriptstyle\conv\Dscr}$.

\subsection{Atomic sum and polar convolution} \label{sec:sum-max-conv}

One important application of the alignment principles that we discuss in this section is in the analysis of the various demixing problems
\begin{subequations} \label{eq:demixing-problems}
\begin{alignat}{4}
    \label{eq:demixing-unconstrained}
    &\minimize{x_1,\,x_2} &\quad& f(x_1+x_2) \mathrlap{{}
      + \rho\max\set{\gamma\Aso(x_1),\,\gamma\Ast(x_2)}}
  \\\label{eq:demixing-gauge-constrained}
  &\minimize{x_1,\,x_2} &\quad& f(x_1+x_2)
  &\ &\st\ & \max\set{\gamma\Aso(x_1),\,\gamma\Ast(x_2)}&\leq \alpha,
  \\\label{eq:demixing-level-constrained}
  &\minimize{x_1,\,x_2} &\quad& \max\set{\gamma\Aso(x_1),\,\gamma\Ast(x_2)} && \st\ & f(x_1+x_2) &\leq \tau.
\end{alignat}
\end{subequations}
These three problems are in fact just special cases of the regularized
formulations in~\eqref{eq:smoothopto}, where $\gauge_\Ascr(x)$ is
replaced by the function
$\max\set{\gauge\Aso(x_1),\,\gauge\Ast(x_2)}$. One of the main goals of this section is to prove the following corollary to \cref{prop-optimality-smooth}.
\begin{corollary}[Optimality and atomic
  sums] \label{cor:optimality-sums} Let $f:\Real^n\to\Real$ be a
  differentiable convex function and $\Ascr_i\subset\Real^n$ for
  $i=1,2$. Assume that at least one set $\Ascr_i$ contains the origin
  in its interior, and that the
  problem~\eqref{eq:demixing-level-constrained} is strictly
  feasible. For each of the problems in~\eqref{eq:demixing-problems},
  a feasible pair $(x_1^*,x_2^*)$ is optimal if and only if $x_i^*$ is
  $\Ascr_i$-aligned with $z^*:=-\nabla f(x_1^*+x_2^*)$.
\end{corollary}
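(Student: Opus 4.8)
The plan is to recognize the three demixing problems in~\eqref{eq:demixing-problems} as instances of the single-set problems in~\eqref{eq:smoothopto}, and then to invoke \cref{prop-optimality-smooth}. I would stack the variables into $w=(x_1,x_2)\in\Re^{2n}$ and set $F(w):=f(x_1+x_2)$, which is differentiable and convex. Let $\Cscr:=\convA_1\times\convA_2\subset\Re^{2n}$; this set is closed, convex, and contains the origin, so \cref{blanket-assumption} holds for it. Working directly from the Minkowski-functional definition shows that the max-regularizer is exactly the gauge of this product set,
\[
  \gauge\Cs(w)=\inf\set{\lambda\ge0 \mid x_1\in\lambda\convA_1,\ x_2\in\lambda\convA_2}
             =\max\set{\gauge\Aso(x_1),\,\gauge\Ast(x_2)},
\]
so each of \eqref{eq:demixing-unconstrained}--\eqref{eq:demixing-level-constrained} coincides term for term with the corresponding problem in~\eqref{eq:smoothopto}. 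Since $\nabla F(w)=(\nabla f(x_1+x_2),\,\nabla f(x_1+x_2))$, the aligning vector supplied by \cref{prop-optimality-smooth} is $-\nabla F(w^*)=(z^*,z^*)$ with $z^*=-\nabla f(x_1^*+x_2^*)$. All three problems are then covered uniformly, and $(x_1^*,x_2^*)$ is optimal if and only if $w^*$ is $\Cscr$-aligned with $(z^*,z^*)$.

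It remains to unfold this product-set alignment into the two per-set conditions. First I would record the two facts that make the product separable: the support function of a product is the sum of support functions, $\sigma\Cs(z_1,z_2)=\sigma\Aso(z_1)+\sigma\Ast(z_2)$, and the face exposed by a product direction is the product of exposed faces, $\FaceC(z_1,z_2)=\Fscr\Aso(z_1)\times\Fscr\Ast(z_2)$. The forward implication (optimal $\Rightarrow$ each $x_i^*$ aligned) is then immediate from the chain
\[
  \ip{x_1^*}{z^*}+\ip{x_2^*}{z^*}
  \;\le\; \gauge\Aso(x_1^*)\,\sigma\Aso(z^*)+\gauge\Ast(x_2^*)\,\sigma\Ast(z^*)
  \;\le\; \gauge\Cs(w^*)\,\sigma\Cs(z^*,z^*),
\]
whose two ends are equal precisely when $w^*$ is $\Cscr$-aligned. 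Equality in the first inequality, combined with the individual polar inequalities~\eqref{eq:polar-inequality} for each $\Ascr_i$, forces each summand to be tight, which is exactly the assertion that $x_i^*$ is $\Ascr_i$-aligned with $z^*$.

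The delicate point, and the step I expect to be the main obstacle, is the reverse unfolding: passing from the two separate alignments back to the joint $\Cscr$-alignment needed to apply \cref{prop-optimality-smooth} in the converse direction. Using the face characterization of \cref{cor-general-alignment}, joint alignment means $w^*\in\cone\FaceC(z^*,z^*)+\rec\Cscr$, and although $\FaceC(z^*,z^*)=\Fscr\Aso(z^*)\times\Fscr\Ast(z^*)$ and $\rec\Cscr=\rec\convA_1\times\rec\convA_2$ factor cleanly, the conic hull of a product face imposes a \emph{common} nonnegative multiple across the two blocks. This coupling between the components—absent from the two independent alignment conditions, which permit independent scalings—is the crux, and is where I expect the hypothesis that at least one $\Ascr_i$ contains the origin in its interior to do real work: that factor is full-dimensional with everywhere-finite, strictly positive support function, which pins the shared scaling and makes the second inequality above tight. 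The strict-feasibility hypothesis on~\eqref{eq:demixing-level-constrained} is needed only to produce the positive multiplier in the level-constrained branch of \cref{prop-optimality-smooth}, and so carries over directly.
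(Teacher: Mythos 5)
Your reduction is genuinely different from the paper's. You stack variables and use the Cartesian product $\Cscr=\convA_1\times\convA_2\subset\Re^{2n}$, whose gauge is $\max\{\gauge\Aso(x_1),\gauge\Ast(x_2)\}$, then invoke \cref{prop-optimality-smooth} in $\Re^{2n}$. The paper instead stays in $\Re^n$: it identifies the max-regularizer, after minimizing over the splitting, with the gauge of the Minkowski sum $\Ascr_1+\Ascr_2$ via polar convolution (\cref{prop:max-convolution}), applies \cref{prop-optimality-smooth} to the reduced problem in $x=x_1+x_2$, and unfolds the $(\Ascr_1+\Ascr_2)$-alignment into per-set alignments via \cref{th:polar-alignment}. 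Your forward direction (optimal $\Rightarrow$ each $x_i^*$ is $\Ascr_i$-aligned) is correct and complete: joint $\Cscr$-alignment forces tightness of each per-set polar inequality because $\sigma\Cs(z^*,z^*)=\sigma\Aso(z^*)+\sigma\Ast(z^*)$ and both support values are nonnegative. That half is, if anything, more self-contained than the paper's route.

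The gap is in the converse, exactly where you flagged it, and the fix you propose cannot work. Writing $g_i=\gauge\Asi(x_i^*)$ and $s_i=\sigma\Asi(z^*)$, per-set alignment gives $\ip{x_1^*}{z^*}+\ip{x_2^*}{z^*}=g_1s_1+g_2s_2$, while joint $\Cscr$-alignment demands this equal $\max\{g_1,g_2\}\,(s_1+s_2)$; the two agree only when $g_1=g_2$ or the smaller-gauge block has vanishing support value. The origin-in-interior hypothesis is irrelevant here: it makes the support functions finite, not the gauges equal. Concretely, take $n=1$, $\Ascr_1=\Ascr_2=[-1,1]$ (both contain $0$ in their interior), $f(x)=\half(x-3)^2$, and problem~\eqref{eq:demixing-gauge-constrained} with $\alpha=1$. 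The pair $(x_1^*,x_2^*)=(1,\tfrac12)$ is feasible, $z^*=\tfrac32$, and each $x_i^*$ is $\Ascr_i$-aligned with $z^*$ (same signs), yet the pair is not optimal, since $(1,1)$ attains a smaller objective; correspondingly $g_1s_1+g_2s_2=\tfrac94\neq 3=\max\{g_1,g_2\}(s_1+s_2)$, so joint alignment fails. The missing ingredient is the balance condition $\gauge\Aso(x_1^*)=\gauge\Ast(x_2^*)$, which is precisely a hypothesis of the paper's \cref{th:polar-alignment}; in the paper it is supplied by optimality of the splitting in~\eqref{eq:22}, i.e., it is available only in the necessity direction. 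Note that the paper's own proof of \cref{cor:optimality-sums} argues only that direction (optimal $\Rightarrow$ aligned) and is silent on sufficiency, so the difficulty you ran into reflects a real looseness in the statement as written, not merely in your argument. If you add the balance condition to your aligned pair (or restrict attention to splittings achieving~\eqref{eq:22}), your product-space route closes cleanly: $g_1=g_2$ makes your second inequality tight, joint alignment follows, and \cref{prop-optimality-smooth} then delivers optimality.
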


Before we can establish the proof of this result, however, we first
establish the close relationship between the sum of atomic
sets and polar convolution. This connection is an important analytical
tool in its own right.

\subsubsection{Polar convolution}

The polar convolution of two gauges $\gauge\Aso$ and $\gauge\Ast$ results in the function
\begin{align}
  (\gauge\Aso\maxconv\gauge\Ast)(x)
  &= \inf_w\max\set{\gauge\Aso(w),\ \gauge\Ast(x-w)}.
    \label{eq:max-conv}
\end{align}
This operation first appears in
Rockafellar\cite[Theorem~5.8]{rockafellar1970convex} for general
convex functions, and is subsequently analyzed by Seeger and
Volle\cite{SeegerVolle95}. When specialized to gauge functions, as
shown in~\eqref{eq:max-conv}, this convolution operation is tightly
connected to the polarity operation on the defining atomic sets. In
that case, Friedlander et al.~\cite{FriedlanderMacedoPong:2019} refer
to the operation as \emph{polar convolution}.

Polar convolution is in fact the functional counterpart to set addition.
\begin{proposition}[Polar convolution of
  gauges] \label{prop:max-convolution} Let $\Ascr_1$ and $\Ascr_2$ be
  non-empty closed convex sets that contain the origin. If at least
  one set contains the origin in its interior, then the polar
  convolution of the gauges $\gauge\Aso$ and $\gauge\Ast$ is the gauge
  \[
    \gauge\Aso\maxconv\gauge\Ast = \gauge_{\scriptscriptstyle\Ascr_1+\Ascr_2}.
  \]
\end{proposition}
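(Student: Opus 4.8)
The plan is to exploit the fact, recorded in~\eqref{eq:15}, that a gauge is completely determined by its unit sublevel set, together with the elementary observation that the polar convolution $h:=\gauge\Aso\maxconv\gauge\Ast$ is itself nonnegative and positively homogeneous. Replacing $w$ by $\lambda w$ in~\eqref{eq:max-conv} and invoking positive homogeneity of $\gauge\Aso$ and $\gauge\Ast$ gives $h(\lambda x)=\lambda h(x)$ for all $\lambda\ge0$, and $h\ge0$ because each gauge is nonnegative. For any such $h$ one checks directly from the Minkowski-functional definition that $\gauge_{S}=h$, where $S:=\set{x | h(x)\le1}$. Hence it suffices to show that $S$ and $\Ascr_1+\Ascr_2$ generate the same gauge, and I will establish this through the two inclusions
\[
  \Ascr_1+\Ascr_2\ \subseteq\ S\ \subseteq\ \cl(\Ascr_1+\Ascr_2).
\]

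First I would record two consequences of the hypothesis that one of the sets, say $\Ascr_1$, contains the origin in its interior. Since $0\in\Ascr_2$, the sum satisfies $\Ascr_1+\Ascr_2\supseteq\Ascr_1$, so $0\in\operatorname{int}(\Ascr_1+\Ascr_2)$; this makes $\gauge_{\scriptscriptstyle\Ascr_1+\Ascr_2}$ finite-valued and, crucially, renders the gauge insensitive to closing its generating set, so that $\gauge_{\scriptscriptstyle\Ascr_1+\Ascr_2}=\gauge_{\cl(\Ascr_1+\Ascr_2)}$. Granting the two displayed inclusions, I take gauges throughout, using that inclusion reverses the order of gauges, to obtain $\gauge_{\scriptscriptstyle\Ascr_1+\Ascr_2}\ge\gauge_{S}\ge\gauge_{\cl(\Ascr_1+\Ascr_2)}$; the two outer functions coincide by the previous sentence, so all three are equal, and combined with $\gauge_{S}=h$ this delivers the claim.

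The two inclusions are the substance of the argument. For the left one, any $x=a_1+a_2$ with $a_i\in\Ascr_i$ has $\gauge\Asi(a_i)\le1$ by~\eqref{eq:15}, so taking $w=a_1$ in~\eqref{eq:max-conv} yields $h(x)\le\max\set{\gauge\Aso(a_1),\,\gauge\Ast(a_2)}\le1$, i.e.\ $x\in S$. For the right inclusion, suppose $h(x)\le1$. Then for every $\epsilon>0$ the infimum in~\eqref{eq:max-conv} supplies a splitting $x=w+(x-w)$ with $\gauge\Aso(w)\le1+\epsilon$ and $\gauge\Ast(x-w)\le1+\epsilon$; since $\Ascr_1$ and $\Ascr_2$ are closed, \eqref{eq:15} converts these into $w\in(1+\epsilon)\Ascr_1$ and $x-w\in(1+\epsilon)\Ascr_2$, whence $x\in(1+\epsilon)(\Ascr_1+\Ascr_2)$. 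Thus $x\in\bigcap_{\epsilon>0}(1+\epsilon)(\Ascr_1+\Ascr_2)$, and because $\Ascr_1+\Ascr_2$ is convex with the origin in its interior, this intersection equals $\cl(\Ascr_1+\Ascr_2)$.

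The step I expect to demand the most care is the treatment of closedness: $\Ascr_1+\Ascr_2$ need not be closed even though both summands are, so the right inclusion can only reach the closure, and the entire reduction rests on the interior-point hypothesis to (a) force $0\in\operatorname{int}(\Ascr_1+\Ascr_2)$, (b) validate the identity $\bigcap_{\epsilon>0}(1+\epsilon)(\Ascr_1+\Ascr_2)=\cl(\Ascr_1+\Ascr_2)$, and (c) justify $\gauge_{\scriptscriptstyle\Ascr_1+\Ascr_2}=\gauge_{\cl(\Ascr_1+\Ascr_2)}$, so that passing to the closure leaves the gauge unchanged. Everything else reduces to the sublevel-set characterization~\eqref{eq:15} and positive homogeneity.
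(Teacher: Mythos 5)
Your proof is correct, but it follows a genuinely different route from the paper's. The paper disposes of this proposition in two lines by passing through polarity: it invokes \cite[Lemma~3.3]{FriedlanderMacedoPong:2019} to write $\gauge\Aso\maxconv\gauge\Ast$ as the polar of the sum of the polar gauges (this is where continuity of $\gauge\Aso$, guaranteed by the interior-point hypothesis, is used), and then \cite[Lemma~3.4]{FriedlanderMacedoPong:2019} to identify that polar with $\gauge\subAsum$. You instead work directly from the Minkowski-functional definition: you show the convolution $h$ is nonnegative and positively homogeneous, hence equal to the gauge of its own unit sublevel set $S$, and then squeeze $\Ascr_1+\Ascr_2\subseteq S\subseteq\cl(\Ascr_1+\Ascr_2)$, using the interior-point hypothesis three times---to place the origin in the interior of the sum, to prove $\bigcap_{\epsilon>0}(1+\epsilon)(\Ascr_1+\Ascr_2)=\cl(\Ascr_1+\Ascr_2)$, and to make the gauge insensitive to closure (the Minkowski sum of two closed convex sets need not be closed, so this care is genuinely necessary, not pedantry). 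What the paper's route buys is brevity and placement of the result inside the polarity calculus that organizes the whole paper; what your route buys is self-containedness---it depends only on the level-set identity~\eqref{eq:15} and the line-segment principle, in the same spirit as the paper's own stated preference (before \cref{prop-support-properties}) for proofs relying only on properties of closed convex sets---and it makes visible exactly where and why the interior-point hypothesis is indispensable, which the paper's citation-based proof leaves implicit. The one step you assert rather than prove, $\gauge\subAsum=\gauge_{\cl(\Ascr_1+\Ascr_2)}$, is a standard consequence of the same line-segment argument you use for the intersection identity, so it is a fair omission in a proof sketch.
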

\begin{proof}
  The hypothesis that one of the sets $\Ascr_1$ and $\Ascr_2$ contains
  the origin implies that the corresponding gauge (say, $\gauge\Aso$) is finite and
  therefore continuous. Thus, 
  \begin{align*}
    \gauge\Aso\maxconv\gauge\Ast
     = (\gauge_{\scriptscriptstyle\Ascr_1\polar}
      + \gauge_{\scriptscriptstyle\Ascr_1\polar})\polar
     = \gauge_{\scriptscriptstyle\Ascr_1+\Ascr_2},
  \end{align*}
  where the first equality follows from
  \cite[Lemma~3.3]{FriedlanderMacedoPong:2019} and the continuity of
  $\gauge\Aso$, and the second equality follows from
  \cite[Lemma~3.4]{FriedlanderMacedoPong:2019}.
\end{proof}

For the remainder of this section, we assume that one of the gauges
$\gauge_{\scriptscriptstyle\Ascr_i}$ is continuous, which holds if the
origin is contained in the interior of $\Ascr_i$.

\subsubsection{Alignment to the sum of sets}

The polar convolution operation, which mixes atoms via the sum of
sets, has the appealing property that it explicitly decomposes a
vector as a sum of elements, each belonging to one of the atomic
sets. In particular, evaluating the polar convolution
\begin{equation} \label{eq:22}
  (\gauge\Aso\maxconv\gauge\Ast)(x)
  = \gauge_{\scriptscriptstyle\Ascr_1+\Ascr_2}(x)
  = \inf_{x_1,\,x_2}\max\set{\gauge\Aso(x_1),\,\gauge\Ast(x_2)|x=x_1+x_2}
\end{equation}
at a point $x$ implicitly generates a decomposition
\[
  x = \sum_{\mathclap{a\in\Ascr_1+\Ascr_2}}c_aa
  = \sum_{\mathclap{\genfrac{}{}{0pt}{}{a_1\in\Ascr_1}{a_2\in\Ascr_2}}}c_a(a_1+a_2)
    = x_1 + x_2,
\]
where each $x_i\in\cone\Ascr_i$. Moreover, it is a straightforward
consequence of optimality for~\eqref{eq:22} that
$\gauge\subAsum(x)=\gauge\Aso(x_1)=\gauge\Ast(x_2)$.

\begin{theorem}[Alignment in polar convolution] \label{th:polar-alignment}
  Suppose that the pair $(x,z)$ is $(\Ascr_1+\Ascr_2)$-aligned and
  \[
    \gauge_{\scriptscriptstyle\Ascr_1+\Ascr_2}(x)
    = \gauge\Aso(x_1) = \gauge\Ast(x_2),
    \text{where} x=x_1+x_2.
  \]
  Then the pair $(x_i,z)$ is $\Ascr_i$-aligned for $i=1,2$.
\end{theorem}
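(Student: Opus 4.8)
The plan is to reduce the statement to two applications of the polar inequality~\eqref{eq:polar-inequality}---one for each pair $(x_i,z)$---and then to argue that $(\Ascr_1+\Ascr_2)$-alignment cannot hold unless both of these inequalities are simultaneously tight. The single structural ingredient I need is that the support function is additive over the Minkowski sum,
\[
  \sigma\subAsum(z) = \sigma\Aso(z) + \sigma\Ast(z),
\]
which I would verify directly: since each set contains the origin, $0\in\Ascr_1+\Ascr_2$, and since the support function of a set agrees with that of its convex hull (the closure-and-convex-hull property, \cref{prop-support-properties}(a)), one has $\sigma\subAsum(z)=\sup_{a_1\in\Ascr_1,\,a_2\in\Ascr_2}\ip{a_1+a_2}{z}$, and this separates into $\sup_{a_1}\ip{a_1}{z}+\sup_{a_2}\ip{a_2}{z}$.

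Writing $\gamma$ for the common value $\gauge\subAsum(x)=\gauge\Aso(x_1)=\gauge\Ast(x_2)$, the hypothesis that $(x,z)$ is $(\Ascr_1+\Ascr_2)$-aligned reads, via \cref{cor-general-pi} and the additivity above,
\[
  \ip{x_1}{z}+\ip{x_2}{z}=\ip x z = \gamma\bigl(\sigma\Aso(z)+\sigma\Ast(z)\bigr).
\]
The polar inequality applied to each constituent pair gives $\ip{x_i}{z}\le\gauge\Asi(x_i)\,\sigma\Asi(z)=\gamma\,\sigma\Asi(z)$ for $i=1,2$. Adding these two bounds reproduces exactly the right-hand side above; since the left-hand side already equals that quantity, neither bound can be strict. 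Hence $\ip{x_i}{z}=\gamma\,\sigma\Asi(z)=\gauge\Asi(x_i)\,\sigma\Asi(z)$ for each $i$, which is precisely $\Ascr_i$-alignment by \cref{cor-general-pi}.

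The step I would treat most carefully---really the only obstacle---is confirming that the polar inequality is legitimately applicable to each pair, i.e.\ that $x_i\in\dom\gauge\Asi$ and $z\in\dom\sigma\Asi$. The former is immediate because $\gauge\Asi(x_i)=\gamma$ is finite. For the latter, $(\Ascr_1+\Ascr_2)$-alignment presupposes $z\in\dom\sigma\subAsum$, so $\sigma\subAsum(z)$ is finite; as each $\sigma\Asi$ is nonnegative (the sets contain the origin) and they sum to this finite value, each $\sigma\Asi(z)$ is finite, placing $z$ in both domains. This same observation handles the degenerate case $\gamma=0$: the alignment condition then forces $\ip x z=0$, each term $\ip{x_i}{z}$ is bounded above by $\gamma\,\sigma\Asi(z)=0$, and the identical squeeze pins both to zero, so that $(x_i,z)$ is again $\Ascr_i$-aligned---now through the recession-cone branch of \cref{cor-general-alignment}.
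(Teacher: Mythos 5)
Your proof is correct and follows essentially the same route as the paper's: both rest on the additivity $\sigma\subAsum=\sigma\Aso+\sigma\Ast$ of support functions over the Minkowski sum, followed by a squeeze argument forcing the polar inequality for each pair $(x_i,z)$ to hold with equality. If anything, yours is slightly more careful than the paper's version, which normalizes by the common gauge value (implicitly assuming it is positive), whereas you avoid the division and explicitly dispatch the degenerate case $\gauge\subAsum(x)=0$ and the domain-finiteness of each $\sigma\Asi(z)$.
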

\begin{proof}
   Because $x$ and $z$ are $(\Ascr_1+\Ascr_2)$-aligned,
  \[
    \gauge\subAsum(x)\cdot\sigma\subAsum(z)
    = \ip{x}{z} = \ip{x_1}{z} + \ip{x_2}{z}.
  \]
  Use the fact that $\sigma\subAsum=\sigma\Aso+\sigma\Ast$ and rearrange terms to deduce that
  \[
    \sigma\Aso(z)+\sigma\Ast(z) =
      \ip*{\frac{x_1}{\gauge\Aso(x_1)}}{z}
    + \ip*{\frac{x_2}{\gauge\Aso(x_2)}}{z}.
  \]
  But because $x_1\in\Ascr_1$ and $x_2\in\Ascr_2$, it follows that
  \[
    \gauge\Aso(x_1)\cdot\sigma\Aso(z) = \ip{x_1}{z}
    \text{and}
    \gauge\Aso(x_2)\cdot\sigma\Aso(z) = \ip{x_2}{z},
  \]
  which establish, respectively, that each $(x_i,z)$ is $\Ascr_i$-aligned.
\end{proof}
\begin{figure}[t]\label{figure-convolution}
  \centering
  \includegraphics[page=9]{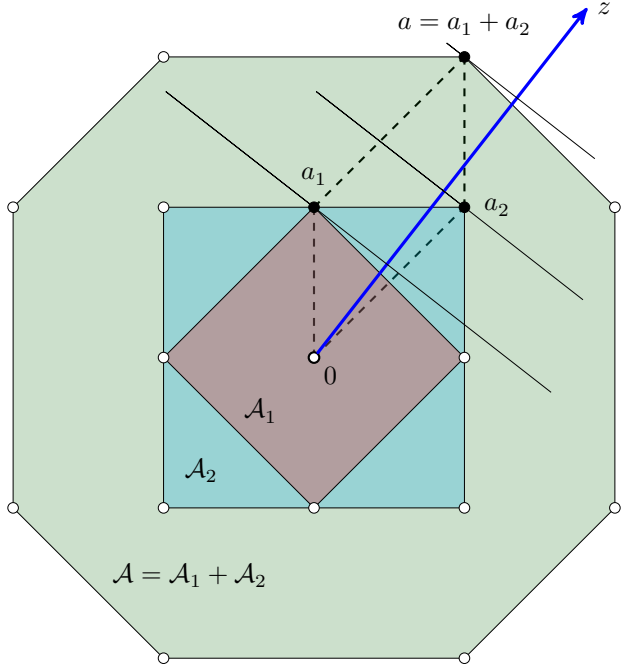}
  \caption{Illustration of the polar alignment principle for atomic sums, as described by
    \Cref{th:polar-alignment} (alignment in polar convolution). The vector $z$ simultaneously exposes atoms,
    indicated by black dots, in
    the atomic sets $\Ascr_1$ and $\Ascr_2$, and also in the sum of atomic sets $\Ascr=\Ascr_1+\Ascr_2$.}
\end{figure}
\subsubsection{Proof of \cref{cor:optimality-sums}}

The first step in the proof is to establish that the regularized optimization problems in~\eqref{eq:demixing-problems} are equivalent, respectively, with the problems
\begin{subequations} \label{eq:demixing-problems-conv}
\begin{alignat}{4}
    \label{eq:demixing-unconstrained-conv}
    &\minimize{x} &\quad& f(x) \mathrlap{{}
      + \rho\gamma\subAsum(x)}
  \\\label{eq:demixing-gauge-constrained-conv}
  &\minimize{x} &\quad& f(x)
  &\ &\st\ & \gamma\subAsum(x)&\leq \alpha,
  \\\label{eq:demixing-level-constrained-conv}
  &\minimize{x} &\quad& \gamma\subAsum(x) && \st\ & f(x) &\leq \tau.
\end{alignat}
\end{subequations}
We establish the equivalence for~\eqref{eq:demixing-unconstrained-conv}; the
equivalence for~\eqref{eq:demixing-gauge-constrained-conv}
and~\eqref{eq:demixing-level-constrained-conv} follows the same line of
reasoning. Observe that
\begin{align*}
  &\inf_{x_1,\,x_2} \Bigl\{ f(x_1+x_2)
    + \rho\max\set{\gamma\Aso(x_1),\,\gamma\Ast(x_2)} \Bigr\}
  \\= &\inf_{\phantom{x_1,\, x_2}\mathllap{x,\, x_1}} \Bigl\{ f(x) + \rho\max\set{\gamma\Aso(x_1),\, \gamma\Ast(x - x_1)} \Bigr\}
  \\= &\inf_{\phantom{x_1,\, x_2}\mathllap{x\ \ }} \Bigl\{ f(x) + \rho\inf_{x_1}\max\set{\gamma\Aso(x_1),\, \gamma\Ast(x - x_1)} \Bigr\}
  \\= &\inf_{\phantom{x_1,\, x_2}\mathllap{x\ \ }} \Bigl\{ f(x) + \rho\gamma\subAsum(x) \Bigr\},
\end{align*}
where the last equality follows from the definition of polar
convolution~\eqref{eq:max-conv} and \cref{prop:max-convolution}.

Next, we use \cref{prop-optimality-smooth} to establish that a point
$\xstar$ is a solution to one of the three
problems~\eqref{eq:demixing-problems-conv} if and only if $x^*$ is
$(\Ascr_1+\Ascr_2)$-aligned with $z^*:=-\nabla f(x^*)$. The
equivalence of the formulations~\eqref{eq:demixing-problems-conv}
and~\eqref{eq:demixing-problems} means that $x^*=x_1^*+x_2^*$, where
$x^*_1$ and $x^*_2$ are optimal
for~\eqref{eq:demixing-problems}. Moreover, optimality of $x^*_1$ and
$x^*_2$ implies that
$\gauge_{\scriptscriptstyle\Ascr_1+\Ascr_2}(x^*) = \gauge\Aso(x^*_1) =
\gauge\Ast(x^*_2)$. Thus, \cref{th:polar-alignment} applies in this case and each
pair $(x^*_i,z^*)$ is $\Ascr_i$-aligned.\qed

\subsubsection{Morphological component analysis} \label{sec-morphological-component-analysis}

We show how the alignment principle can be used as part of a demixing
application in signal separation, also known as morphological component
analysis~\cite{dtds06}. Our discussion below focuses on demixing using the
constrained formulation~\eqref{eq:demixing-gauge-constrained-conv}, but can be
easily extended to the other two formulations.

Suppose that $x^*$ is the solution of
\eqref{eq:demixing-gauge-constrained-conv}. Then we know $x^*=x_1+x_2$ for some
$x_i\in\alpha\Ascr_i$. We recover the constituent components $x_i$ using two
stages. In the first stage, we apply the conditional gradient method
(\Cref{algo:conditional-gradient-vanilla})
to~\eqref{eq:demixing-gauge-constrained-conv} with $\Ascr:=\Ascr_1+\Ascr_2$ to
obtain the negative gradient $z^*=-\nabla f(x^*)$. (The primal iterate $x\k$
does not need to be stored.) The key to the efficient application of this method
is to recognize that the exposed face of the sum of sets is equal to the sum of
exposed faces, i.e.,
\[
  \Face{\!\scriptscriptstyle\Ascr_1+\Ascr_2}(z)=\Face\Aso(z)+\Face\Ast(z).
\]
Thus, Step~\ref{algo:cg-expose} in the CG method can be implemented using
separate procedures available for exposing a face in each of the atomic sets
$\Ascr_i$.

In the second stage, we use $z^*$ to expose the atoms in each component $x_i$.
\Cref{th:polar-alignment} asserts that each $x_i$ is $\Ascr_i$-aligned
with the negative gradient $z^*:=-\nabla f(x^*)$, and therefore exposes the atoms in
$\Ascr_i$ that supports $x_i$. Thus, each component $x_i$ can be recovered as
the solution of the reduced optimization problem
\[
  \minimize{x_1,\,x_2}\enspace f(x_1+x_2)
  \enspace\st\enspace
  \gauge_{\scriptscriptstyle\Escr_{\Ascr_i}(z^*)}(x_i)\le\alpha,\ i=1,2.
\]
The underlying assumption, of course, is that the exposed face $\Face\Asi(z^*)$
containing the relevant atoms has small dimension, since otherwise this problem
could be as expensive as the original problem. A variety of algorithms can be
applied to solve this reduced problem.

Although our discussion above considered only two atomic sets, the analysis
extends trivially to any number of atomic sets.

  \begin{example}[Separating background from foreground in a noisy image] \label{example-mca}

    We give a concrete example from morphological component analysis that
    illustrates how this approach can be used in practice to separate background
    and foreground from a noisy image. Suppose that the $m$-vector $b=x_s +
    x_\ell + \epsilon$ encodes a 2-dimensional image comprised of a sparse
    component $x_s$, a low-rank component $x_\ell$, and structured noise
    $\epsilon$. The ability to decouple $b$ into these three components rests on
    their incoherence\cite{wright2009robust,oymak2017universality,McCoy2014}.
    Because our aim here is only to illustrate the polar-alignment property, we
    make the simplifying assumption that the noise $\epsilon$ is sparse in the
    Fourier basis, which is known to be incoherent with sparsity and low-rank.
    Based on these assumptions, we choose $\Ascr_1$ to be the unit 1-norm ball
    (\Cref{example-one-norm}), $\Ascr_2$ to be the nuclear-norm ball
    (\Cref{example-nuclear-norm}), and $\Ascr_3=D\T\Ascr_1$, where $D$ is the
    discrete cosine transform. Use \cref{prop-linear-transform} to deduce that
    the gauge that corresponds to $\Ascr_3$ is the transformed 1-norm:
    \[
      \gauge_{D\T\Ascr_1} = \gauge_{\Ascr_1}(D\,\cdot\,) = \|D\cdot\|_1.
    \]

    We follow the approach outlined in
    \cref{sec-morphological-component-analysis}. For the first stage, we apply
    the dual CG method to the problem
    \[
      \minimize{x}\enspace\half\|x-b\|_2^2\enspace\st\enspace\gauge_{\Ascr_1+\Ascr_2+\Ascr_3}(x)\le\tau
    \]
    to obtain the negative gradient $z^*=b-x^*$ (without storing the primal
    iterates $x\k$ or solution $x^*$). In the second stage, the primal solution
    $x^*$ is recovered by solving the problem
    \[
      \minimize{c^{(1)}_a,\,c^{(2)}_a,\,c^{(3)}_a}\enspace
      \half\|x-b\|_2^2 \text{with} x=\sum_{i=1,2,3}\sum_{a^{(i)}\in\Escr_{\Ascr_i}(z^*)}c_a^{(i)}a^{(i)}
    \]
    over the coefficients $c^{(i)}_a$. (Because in this case the atomic sets are
    centrosymmetric, we may ignore the nonnegativity requirements of the
    coefficients.)
    
    The first panel in \Cref{figure-chess-board} shows a noisy 500-by-500 pixel
    image of a chess board. The remaining panels show the separated images
    obtained after 2000 iterations of the CG algorithm as described above.

  \begin{figure}[t] \label{figure-chess-board}
    \centering
    \begin{tabular}{@{}c@{}c@{}c@{}c@{}}
             noisy       & low-rank   & sparse     & reconstructed
   \\[ -0pt] observation & background & foreground & image
   \\[-10pt]\includegraphics[width=.24\textwidth]{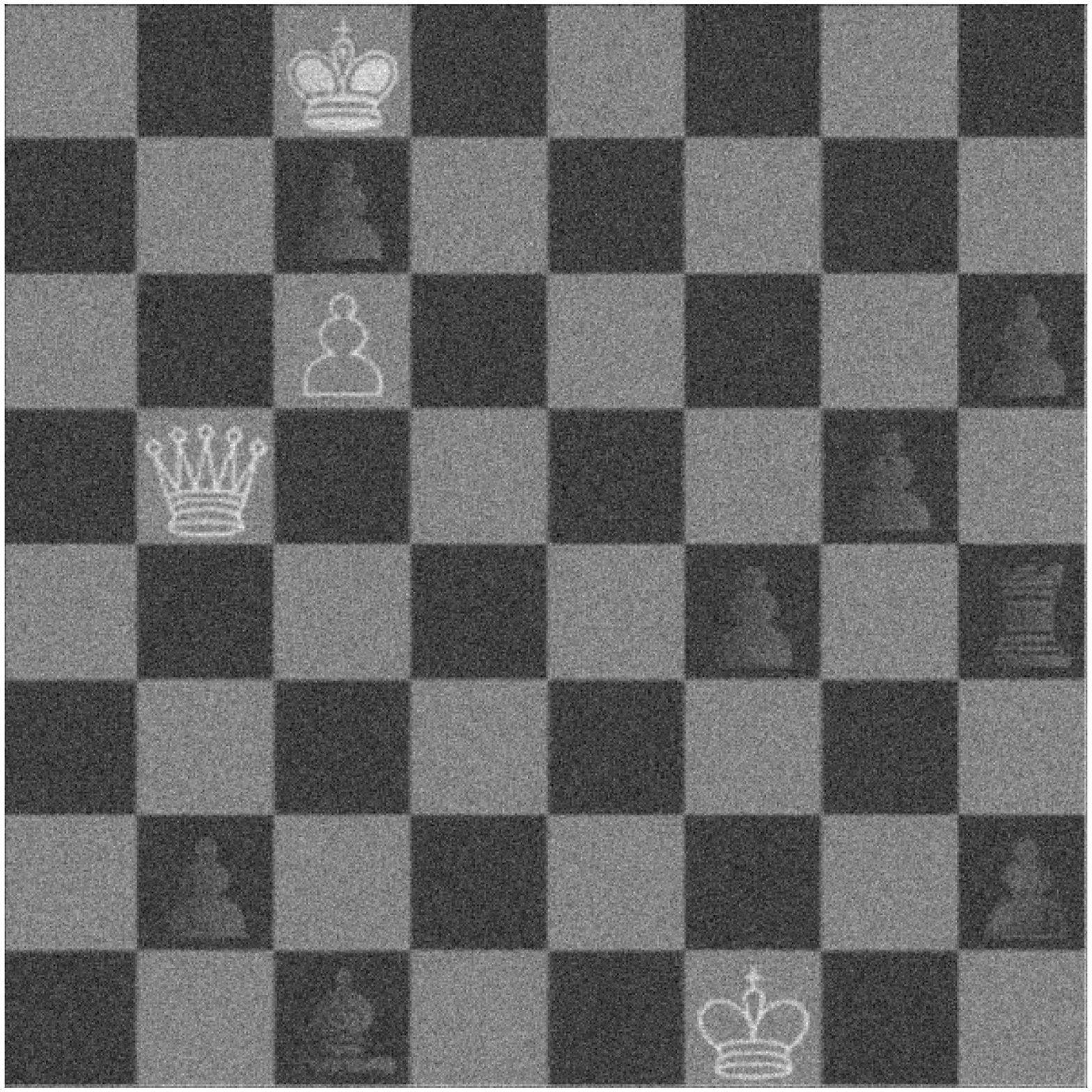}
    &\includegraphics[width=.24\textwidth]{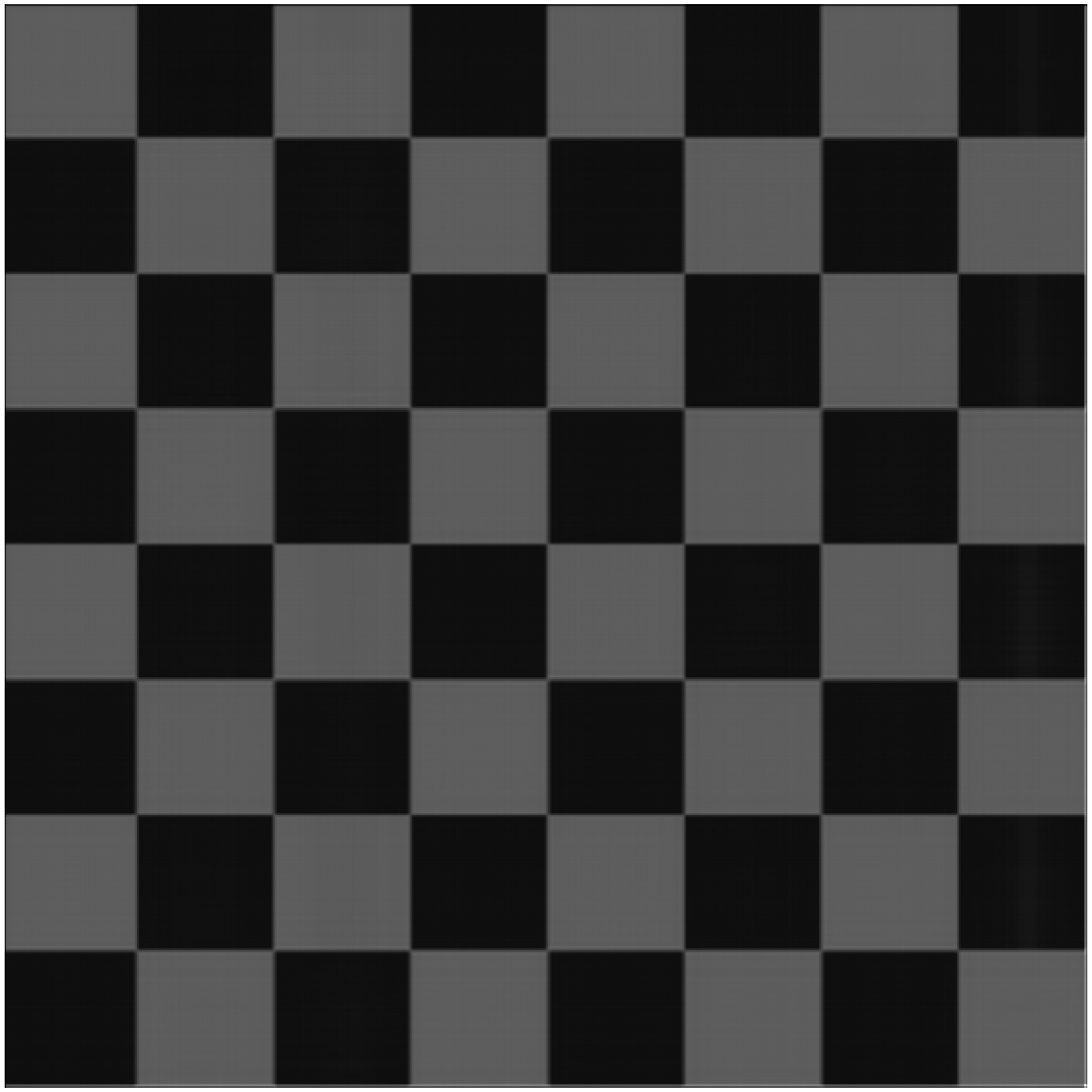}
    &\includegraphics[width=.24\textwidth]{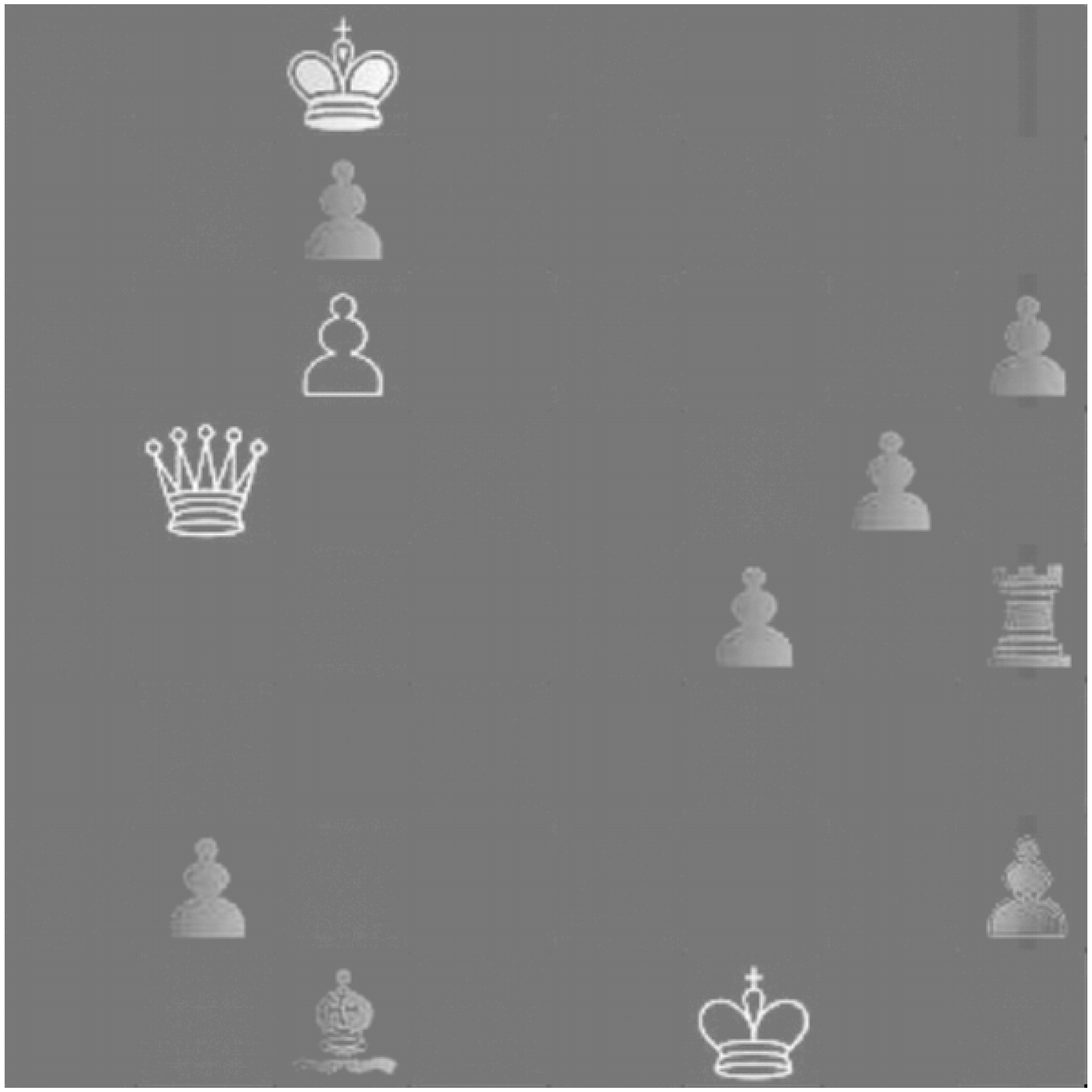}
    &\includegraphics[width=.24\textwidth]{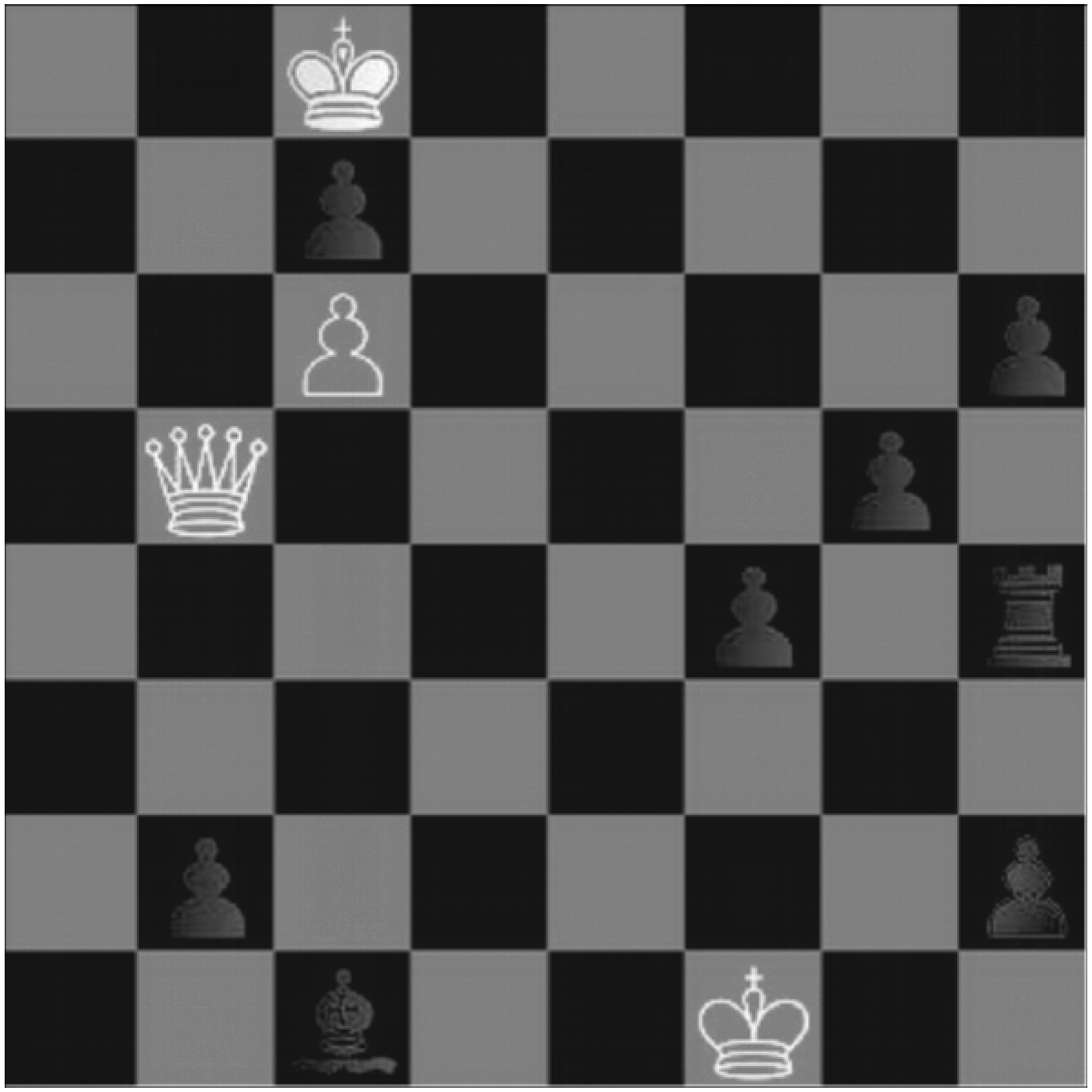}
  \\[-14pt] (a)         & (b)        & (c)        & (d) 
    \end{tabular}
    \caption{Morphological component analysis via polar convolution is used to denoise and separate
      foreground from background in an image. The chess-board image in panel (a)
    has been corrupted with noise. Panels (b) and (c) show the extracted
    low-rank and sparse parts of the image, which are assembled in panel (d) as
    the final reconstruction of the observed image (a). See \Cref{example-mca}.}
  \end{figure}
\end{example}

\subsection{Atomic unions and sum convolution}

The infimal sum convolution between two gauges $\gauge\Aso$ and
$\gauge\Ast$ is defined through the optimization problem
\begin{align*}
  (\gauge\Aso\infc\gauge\Ast)(x)
  &= \inf_w\set{\gauge\Aso(w)+\gauge\Ast(x-w)},
\end{align*}
Although here we define this operation only for gauges, it can be
applied to any two convex functions and always results in another
convex function~\cite[Theorem~5.4]{rockafellar1970convex}. Normally
the operation is simply called \emph{infimal convolution}, but here we
use the term \emph{sum convolution} to distinguish it from another
form of infimal convolution that we use in \cref{sec:sum-max-conv}.

\begin{proposition}[Sum convolution of gauges] \label{prop:sum-convolution}
  Let $\Ascr_1$ and $\Ascr_2$ be non-empty closed convex sets that
  contain the origin. The sum convolution of the gauges
  $\gauge\Aso$ and $\gauge\Ast$ is the gauge
  \[
    \gauge\Aso\infc\gauge\Ast =
    \gauge_{\scriptscriptstyle\Ascr_1\cup\Ascr_2}.
  \]
\end{proposition}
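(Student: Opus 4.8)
The plan is to give a self-contained proof by direct manipulation of the Minkowski-functional definition of the gauges, in the spirit of \cref{prop-guage-equivalence}, rather than appealing to the polarity-based machinery of \cite{FriedlanderMacedoPong:2019}. Recall the notational convention $\gauge_{\scriptscriptstyle\Ascr_1\cup\Ascr_2}\equiv\gauge_{\scriptscriptstyle\conv(\Ascr_1\cup\Ascr_2)}$, and that for a convex set $\Cscr$ the gauge is $\gauge\Cs(x)=\inf\set{\lambda\ge0|x\in\lambda\Cscr}$. The only structural fact I need about the target set is the standard description of the convex hull of a union of two convex sets: because $\Ascr_1$ and $\Ascr_2$ are convex, every point of $\conv(\Ascr_1\cup\Ascr_2)$ can be written as $\theta a_1+(1-\theta)a_2$ with $a_i\in\Ascr_i$ and $\theta\in[0,1]$ (group the contributing points from each $\Ascr_i$ and use convexity). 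I would record this characterization first.

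With that in hand I would prove the two inequalities separately, using $\epsilon$-approximate decompositions so as to sidestep any question of attainment. For $\gauge_{\scriptscriptstyle\conv(\Ascr_1\cup\Ascr_2)}\le\gauge\Aso\infc\gauge\Ast$: fix a splitting $x=x_1+x_2$ with $\gauge\Aso(x_1),\gauge\Ast(x_2)$ finite, pick $b_i\in\Ascr_i$ with $x_i=(\gauge_{\scriptscriptstyle\Ascr_i}(x_i)+\epsilon)b_i$, set $\Lambda=\gauge\Aso(x_1)+\gauge\Ast(x_2)+2\epsilon$, and observe that $x/\Lambda$ is a convex combination of $b_1$ and $b_2$, hence lies in $\conv(\Ascr_1\cup\Ascr_2)$; this gives $\gauge_{\scriptscriptstyle\conv(\Ascr_1\cup\Ascr_2)}(x)\le\Lambda$, and letting $\epsilon\downarrow0$ then infimizing over splittings yields the bound. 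For the reverse inequality, set $\lambda=\gauge_{\scriptscriptstyle\conv(\Ascr_1\cup\Ascr_2)}(x)$, write $x=(\lambda+\epsilon)(\theta a_1+(1-\theta)a_2)$ via the convex-hull description, and take $x_1=(\lambda+\epsilon)\theta a_1$, $x_2=(\lambda+\epsilon)(1-\theta)a_2$; then $\gauge\Aso(x_1)\le(\lambda+\epsilon)\theta$ and $\gauge\Ast(x_2)\le(\lambda+\epsilon)(1-\theta)$, so $(\gauge\Aso\infc\gauge\Ast)(x)\le\lambda+\epsilon$, and $\epsilon\downarrow0$ finishes.

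An equivalent, more conceptual route is through epigraphs: by~\eqref{eq:26}, $\epi\gauge_{\scriptscriptstyle\Ascr_i}=\cone(\Ascr_i\times\{1\})$, and since the infimal convolution of closed proper functions adds epigraphs, $\epi(\gauge\Aso\infc\gauge\Ast)=\cone(\Ascr_1\times\{1\})+\cone(\Ascr_2\times\{1\})$. A short computation shows this Minkowski sum agrees with $\cone(\conv(\Ascr_1\cup\Ascr_2)\times\{1\})$ on the slice $\{t>0\}$, which is exactly $\epi\gauge_{\scriptscriptstyle\conv(\Ascr_1\cup\Ascr_2)}$, so the two functions coincide. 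I would keep the direct argument in the body and relegate this epigraph viewpoint to a remark.

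The main obstacle I anticipate is bookkeeping around degeneracies rather than any genuine difficulty: (i) when $x\notin\cone\conv(\Ascr_1\cup\Ascr_2)$ both sides equal $+\infty$ and the feasible sets are empty, so this case must be disposed of at the outset; and (ii) the recession directions, where $\gauge_{\scriptscriptstyle\Ascr_i}$ vanishes on $\rec\Ascr_i$ (\cref{prop-support-properties-recession}) and the defining infima need not be attained. The $\epsilon$-approximation in both inequalities is precisely what absorbs (ii), since it never requires the minimizing scalars to be achieved; this is why I prefer to argue throughout with infima and approximate representations rather than assuming exact decompositions.
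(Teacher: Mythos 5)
Your proof is correct, but it takes a genuinely different route from the paper's. The paper proves this proposition by invoking \cref{prop-guage-equivalence} to rewrite each of $\gauge\Aso(w)$ and $\gauge\Ast(x-w)$ in its ``sum form'' as an infimum over conic atomic decompositions, and then collapses the nested infima (over $w$ and over the two coefficient families) into a single infimum over decompositions of $x$ using atoms drawn from $\Ascr_1\cup\Ascr_2$; the identity then falls out in three lines with no case analysis and no limiting arguments, since everything stays at the level of infima. You instead work with the Minkowski-functional form, use the standard two-point description of $\conv(\Ascr_1\cup\Ascr_2)$ for convex $\Ascr_1,\Ascr_2$ (which requires both sets nonempty, as the hypotheses guarantee), and prove the two inequalities separately with $\epsilon$-approximate representations. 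What your route buys is explicitness about the degeneracies that the paper's formal infimum manipulation silently absorbs: non-attainment of the defining infima, recession directions where $\gauge_{\scriptscriptstyle\Ascr_i}$ vanishes, and the fact that $\conv(\Ascr_1\cup\Ascr_2)$ need not be closed; your $\epsilon$-arguments handle all of these, and note that writing $x_i=(\gauge_{\scriptscriptstyle\Ascr_i}(x_i)+\epsilon)b_i$ with $b_i\in\Ascr_i$ is itself justified by convexity and origin containment of $\Ascr_i$. What the paper's route buys is brevity and uniformity, plus a conceptual payoff: it exhibits the result as a statement about merging atomic decompositions, which is the theme of the section. One caution on your closing remark: the epigraph identity you cite is not quite right as stated, since infimal convolution adds \emph{strict} epigraphs in general (equality for full epigraphs requires exactness of the convolution), so that alternative argument needs the same slice-and-closure care you allude to; since you keep it as a remark and your main argument never uses it, this does not affect the proof.
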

\begin{proof}
  Using \cref{prop-guage-equivalence}, we are led to the following equivalent expressions:
  \begin{align*}
    (\gauge\Aso\infc\gauge\Ast)(x)
    &= \inf_w\Biggl\{
      \inf_{c_{a}}
      \biggl\{
        \sum_{a\in\Ascr_1}c_a \biggm
        |\ \phantom{x-}w=\sum_{a\in\Ascr_1}c_aa,\ c_a\ge0
      \biggr\}
      \\&\phantom{\inf\Biggl\{\,}+
      \inf_{c_{a}}
      \biggl\{
        \sum_{a\in\Ascr_2}c_a \biggm|
        x-w=\sum_{a\in\Ascr_2}c_aa,\ c_a\ge0
      \biggr\}      
    \Biggr\}
    \\
    &= \inf_{w,\,c_a}\Biggl\{
      \sum_{a\in\Ascr_1\cup\Ascr_2}c_a \biggm|
      w = \sum_{a\in\Ascr_1}c_aa,\
      x-w = \sum_{a\in\Ascr_2}c_aa,\ c_a\ge0
      \biggr\}
  \\&= \inf_{c_a}\Biggl\{
      \sum_{a\in\Ascr_1\cup\Ascr_2}c_a \biggm|
      x = \sum_{\mathclap{a\in\Ascr_1\cup\Ascr_2}}c_aa,\ c_a\ge0
    \biggr\}
    = \gauge_{\scriptscriptstyle\Ascr_1\cup\Ascr_2}(x),
  \end{align*}
  which establishes the claim.
\end{proof}

\section{Conclusions} \label{sec-conclusions}

The theory of polar alignment and its relationship with atomic decompositions
offers a rich grammar with which to think about structured optimization. Of
course, the underlying ideas are not entirely new and many of the conclusions can be
derived using standard arguments from Lagrange multiplier theory, but we have
found that the notions of polarity and alignment offer a clarifying viewpoint.
Indeed, concepts such as active sets and supports, which are intuitive for
polyhedral constraints and vectors, easily extend to more abstract settings when
we adopt the vocabulary of alignment, exposed faces, and the machinery of 
gauges and support functions.

Further research opportunities remain. For example, most (if not all) of the
ideas we have presented could be generalized to the infinite-dimensional
setting, which would accommodate more general decompositions. Also, other
standard algorithms, such as splitting and bundle
methods~\cite{FanSunFriedlander2019}, seem to exhibit properties that can easily
be explained using the language of polar alignment.

\section*{Acknowledgments}

The last author (MPF) is indebted to James Burke at the University of
Washington, who over years of collaboration and discussion, and through his
mastery of convex analysis and optimization, helped to influence many of the
ideas in this manuscript.

\bibliographystyle{plain}
\bibliography{bibmaster/shorttitles,bibmaster/master,bibmaster/friedlander}
\end{document}